\newtheorem{theorem}{Theorem}[section]
\newtheorem{lemma}[theorem]{Lemma}
\newtheorem{prop}[theorem]{Proposition}
\newtheorem{cor}[theorem]{Corollary}
\newtheorem{note}[theorem]{Note}
\newtheorem*{cor*}{Corollary}
\newtheorem*{thm*}{Theorem}
\newtheorem*{lem*}{Lemma}
\newtheorem*{prop*}{Proposition}
\theoremstyle{definition}
\newtheorem{definition}[theorem]{Definition}
\newtheorem{defn}[theorem]{Definition}
\newtheorem{example}[theorem]{Example}
\newtheorem*{defn*}{Definition}
\theoremstyle{remark}
\newtheorem{remark}[theorem]{Remark}
\newcommand{\Crit}{\operatorname{Crit}}
\newcommand{\Supp}{\operatorname{Supp}}
\newcommand{\Cl}{\operatorname{Cl}}
\newcommand{\Il}{\operatorname{\mathcal{I}}}
\newcommand{\cA}{\mathcal{A}}
\newcommand{\cB}{\mathcal{B}}
\newcommand{\cC}{\mathcal{C}}
\newcommand{\cI}{\mathcal{I}}
\newcommand{\cZ}{\mathcal{Z}}
\newcommand{\bZ}{{\mathbb{Z}}}
\newcommand{\fa}{\mathfrak{a}}
\newcommand{\fb}{\mathfrak{b}}
\newcommand{\fc}{\mathfrak{c}}
\newcommand{\fd}{\mathfrak{d}}
\newcommand{\fj}{\mathfrak{j}}
\newcommand{\fA}{\mathfrak{A}}
\newcommand{\fC}{\mathfrak{C}}
\newcommand{\fD}{\mathfrak{D}}
\newcommand{\fI}{\mathfrak{I}}
\newcommand{\fJ}{\mathfrak{J}}
\newcommand{\N}{{\mathbb{N}}}
\newcommand{\Z}{{\mathbb{Z}}}
\newcommand{\Q}{{\mathbb{Q}}}
\newcommand{\R}{{\mathbb{R}}}
\newcommand{\C}{{\mathbb{C}}}
\newcommand{\cone}{\mathtt{Cif}1}
\newcommand{\ctwo}{\mathtt{Cif}2}
\newcommand{\ctwoa}{\mathtt{Cif}2a}
\newcommand{\ctwob}{\mathtt{Cif}2b}
\newcommand{\ctwoc}{\mathtt{Cif}2c}
\newcommand{\ctwoR}[1]{\mathtt{Cif}2(#1)}
\newcommand{\set}[1]{\ddot{#1}}
\newcommand{\supp}{\operatorname{Supp}}
\newcommand{\id}{\operatorname{id}}
\newcommand{\E}{\mathbb{E}}
\newcommand{\trivgp}{\langle e \rangle}
\newcommand{\Span}{\operatorname{Span}}
\newcommand{\CE}{\E_{\mu}}
\newcommand{\norm}[1]{\left \lVert #1 \right \rVert}
\newcommand{\abs}[1]{\left \lvert #1 \right \rvert}
\renewcommand{\phi}{\varphi}
\newcommand{\CZ}{C^{*}_{r}(\Z)}
\author{Tattwamasi Amrutam}
\address{Ben Gurion University of the Negev.
	Department of Mathematics.
	Be'er Sheva, 8410501, Israel.
}
\email{tattwamasiamrutam@gmail.com}
\author{Eli Glasner}
\address{Tel-Aviv University.
	Department of Mathematics.
	Tel-Aviv, Israel.
}
\email{glasner@math.tau.ac.il}
\author{Yair Glasner} 
\address{Ben Gurion University of the Negev.
	Department of Mathematics.
	Be'er Sheva, 8410501, Israel.
}
\email{yairgl@math.bgu.ac.il}
\subjclass[2020]{Primary 37A55, 37B05; Secondary 46L55}
\keywords{$C^*$-crossed products, intermediate subalgebras,
irrational rotation crossed product, $C^*$-simple groups}
\thanks{This research was supported by grants from the Israel Science Foundation:
ISF 1175/18 for the first author,
ISF 1194/19 for the second, and
ISF 2919/19 for the third. The first named author was also supported by research funding from the European Research Council
(ERC) under the European Union's Seventh Framework Program
(FP7-2007-2013) (Grant agreement No. 101078193)}
\date{January, 2024}
\begin{document}
\title[NON-ABELIAN FACTORS]{NON-ABELIAN FACTORS FOR ACTIONS OF $\mathbb{Z}$ AND OTHER NON-$C^*$-SIMPLE GROUPS.}

\begin{abstract}
Let $\Gamma$ be a countable group and $(X, \Gamma)$ a compact topological dynamical system.
We study the question of the existence of an intermediate $C^*$-subalgebra
$\mathcal{A}$
$$
 C^{*}_{r} (\Gamma) < \cA < C(X) \rtimes_r \Gamma,
$$
which is not of the form  $\mathcal{A} = C(Y) \rtimes_r \Gamma$,
corresponding to a factor map $(X,\Gamma) \to (Y,\Gamma)$.
Here $ C^{*}_{r} (\Gamma)$ is the 
reduced $C^*$-algebra of $\Gamma$ and 
$C(X) \rtimes_r \Gamma$ is
the reduced $C^*$-crossed-product of $(X,\Gamma)$.
Our main results are:
(1) For $\Gamma$ which is not $C^*$-simple, when $(X,\Gamma)$ admits
a $\Gamma$-invariant probability measure, then
such a sub-algebra always exists.
(2) For $\Gamma = \mathbb{Z}$ and $(X, \Gamma)$
an irrational rotation of the circle $X = \mathbb{R}/\mathbb{Z}$,
we give a full description of all these
non-crossed-product subalgebras.
\end{abstract}

\maketitle
\tableofcontents
\section{Introduction}
Let $\Gamma$ be a countable group and
$(X, \Gamma)$ a compact topological dynamical system. We form the reduced $C^*$-crossed product $C(X) \rtimes_{r} \Gamma$. 

A factor map $\pi : (X,\Gamma) \to (Y,\Gamma)$ of $\Gamma$-systems gives rise to an intermediate subalgebra $C^{*}_{r}(\Gamma) < C(Y) \rtimes_r \Gamma < C(X) \rtimes_r \Gamma$. 
Now a $C^*$-subalgebra $\mathcal{A}$ of the crossed product
which contains $C^{*}_{r}(\Gamma)$ is invariant under the 
group of automorphisms induced by $\Gamma$ on $C(X) \rtimes_r \Gamma$
hence, by analogy, we can think of intermediate subalgebras $\{\cA \ | \ C^{*}_{r} (\Gamma) < \cA < C(X) \rtimes_r \Gamma\},$ as 
non-commutative factors of the dynamical system $(X,\Gamma)$. In \cite{AK}, it was shown that every intermediate $C^*$-algebra $\mathcal{A}$ of the form $C_r^*(\Gamma)\subset\mathcal{A}\subset C(X)\rtimes_r\Gamma$ is simple, provided that $\Gamma$ is $C^*$-simple and $X$ is a minimal $\Gamma$-space. In a few cases (e.g.,~\cite{A19}), it is possible to show that all the intermediate algebras come from dynamical factors. Other results prove a relative statement. Suzuki \cite{Suz18}, for example, shows that when $(\Gamma, Y)$ is a free action, all the intermediate algebras of the form $C(Y) \rtimes_r \Gamma < \cA < C(X) \rtimes_{r} \Gamma$ come from intermediate dynamical factors $X \rightarrow Z \rightarrow Y$. In this work, we study the existence of non-commutative factors when the group $\Gamma$ is not $C^*$-simple. 

Our main results concern
the case when the group $\Gamma$ is the group of integers and the dynamical system is an irrational circle rotation. 

Let $\alpha \in 2\pi \left(\R \setminus \Q\right)$, $X = [-\pi,\pi]/(-\pi \sim \pi)$ and let $T:X \rightarrow X$ 
be the irrational rotation given by $T(x) = x+\alpha \pmod {2\pi}$. The dynamical system $(X,T)$ is minimal and uniquely ergodic with
Lebesgue measure $\mu$, given by
$$
\mu(f) = \frac{1}{2\pi}\int_{-\pi}^{\pi} f(x)\, dx \qquad \qquad f \in C(X),
$$
as the unique invariant probability measure on $X$. We construct the crossed product $\cC = \cC_{\alpha}= C(X) \rtimes_{r} \bZ$. Note that $C(X)\rtimes_r\bZ$ is the same as the full crossed product $C(X) \rtimes \bZ$ since $\mathbb{Z}$ is amenable (See \cite[Theorem~4.2.6]{BO:book}). Let $\pi_n:X \rightarrow X$ denote the $n$-fold covering maps $\pi_n(x)=nx \pmod{ 2\pi}$, and $\pi_n^*:C(X) \to C(X)$ the dual map, given by $f \mapsto f \circ \pi_n$. These
maps give rise to the $T$-invariant subalgebras $C^n(X) = \pi_n^*(C(X))$ and consequently to intermediate sub-crossed products $\cC^n = C^n(X) \rtimes_{r} \bZ$. By Suzuki's result \cite{Suz18} (also see \cite{ryo2021remark}) every intermediate algebra of the form $\cC^n < \cA < \cC$ is of the form $\cC^q$ for some $q$ dividing $n$. 

The situation in the absolute case turns out to be more complex. We show that there are unaccountably many intermediate subalgebras $\CZ < \cA < \cC$ which are not isomorphic to such crossed products. Still, the situation is nice enough, and our main theorem gives a complete classification of all the intermediate subalgebras of $\cC$. This classification, in turn, yields structural information about these subalgebras. To state our results, we first introduce some terminology. 

Let $\Il$ denote the collection of all closed two-sided ideals in $\CZ$. 
We denote by
$$\fA = \{\cA \ | \  \CZ < \cA < \cC\}, \qquad \fD = \{\fc: \Z \rightarrow \Il \ | \ \fc(0) = \CZ \},$$
the collections of all intermediate subalgebras, and all $\Il$ valued functions on $\Z$ such that $\fc(0)=\CZ$, respectively. We will refer to elements of $\fD$ as {\it{ideal functions}}. The 
{\it support} of an ideal function is defined as
\begin{equation} \label{eqn:support}
\Supp(\fc) = \{n \in \Z \ | \ \fc(n) \ne (0)\}.
\end{equation}
For an intermediate algebra $\cA \in \fA$ and for an ideal function $\fc \in \fD$ we set 
$$\fI_{\cA} = \{I \ | \ I \lhd \cA \} \qquad \fD_{\fc} = \{\fj: \Z \rightarrow \cI \ | \ \fj(n) \le \fc(n), \forall n \in \Z\}.$$ 
$\fI_{\cA}$ is the collection of all closed two-sided ideals in $\cA$. 
Note that such an ideal is automatically 
$^*$-invariant.
In the sequel, when we say `ideal,' we mean a closed, two-sided one.

We refer to the elements of $\fD_{\fc}$ as {\it $\fc$-ideal functions}. Note that $\fD_{\fc} \not \subset \fD$ because we do not require $\fj(0)=\CZ$ for $\fj \in \fD_{\fc}$. The {\it support} of a $\fc$-ideal function is defined by an equation analogous to Equation (\ref{eqn:support}). Since by definition $\fj(n) \leq \fc(n), \forall n \in \Z, $ we have $\Supp(\fj) \subset \Supp(\fc), \forall \fj \in \fD_{\fc}$.

All of these collections admit a natural lattice structure. The partial order on $\fA$ is given by inclusion. Similarly, on $\fD$  
we write $\fc_1 \preceq \fc_2$ when $\fc_1(n) \le \fc_2(n), \ \forall n \in\Z$. The lattice operations are given by: 

\vspace{2mm}
\begin{tabular}{lll}
$\cA_1 \wedge \cA_2 = \cA_1 \cap \cA_2$, & $\cA_1 \vee \cA_2 = \overline{\langle \cA_1,\cA_2 \rangle}$, 
& $\forall \cA_1,\cA_2 \in \fA$ \\
$(\fc_1 \wedge \fc_2)(n) = \fc_1(n) \cap \fc_2(n)$, 
& $(\fc_1 \overline{\vee} \fc_2)(n) = \overline{\fc_1(n) + \fc_2(n)}$, 
& $\forall \fc_1,\fc_2 \in \fD$ \\
\end{tabular}
\vspace{2mm}

\noindent Given $\cA \in \fA$ and $\fc \in \fD$ the lattice structure on $\fI_{\cA}$ and on $\fD_{\fc}$ is defined similarly. 

Via the Fourier transform $\CZ \cong C(S^1)$, and hence $\Il$ can be identified with 
the collection of closed subsets of the circle $\Cl(S^1)$. Under this identification, with every $\fc \in \fD$, we associate a {\it set function} $\set{\fc}: \Z \rightarrow 
\Cl(S^1)$, such that $\set{\fc}(0)=\emptyset$. 
By $\set{\fD}$, we denote the collection of all such functions. Similarly for 
$\fc \in \fD$ we define the collection $\set{\fD}_{\fc}$.

It is crucial to distinguish between the given dynamical system $(X,T)$ and the circle $S^1 = \hat{\Z}$ introduced in the above paragraph
(see Section \ref{sec:circles} below). Still, perhaps surprisingly, we will often use the irrational rotation given by $\tau(t)=t + \alpha \pmod {2\pi}$. Thus, $\tau$ represents the irrational rotation on $S^1$ by the same angle $\alpha$ appearing in the given irrational rotation $T:X \rightarrow X$. 
\begin{definition} \label{def:closed}
An ideal function $\fc \in \fD$ is called {\it{closed}} if the following properties are satisfied: 
\begin{itemize}
    \item [$\cone$] $\set{\fc}(-n) = \tau^{n}\set{\fc}(n), \ \forall n \in \Z$,
    \item [$\ctwo$] $\set{\fc}(m+n) \subset \tau^{-m} \set{\fc}(n) \cup \set{\fc}(m)$ for every $m,n \in \Supp(\fc)$.
\end{itemize}
By $\fC \subset \fD$, we denote the collection of closed ideal functions. 
\end{definition}

\noindent
We define functions 
$\Phi: \fD \rightarrow \fA$ and $\Psi: \fA \rightarrow \fD$ by:
$$\Phi(\fc) = \overline{\langle e^{inx} \eta \ | \ n \in \Z, \eta \in \fc(n)\rangle}, \quad \Psi(\cA)(n) = \{\eta \in \CZ \ | \ e^{inx} \eta \in \cA\}.$$
Note that here $e^{inx}$ is a function in $C(X)$. See Section \ref{Sec:prelim} for more details concerning these notations.

The meet and join operations that we defined above translate to a standard union and intersection: 
$$(\set{\fc}_1 \wedge \set{\fc}_2)(n) = \set{\fc}_1(n) \cup \set{\fc}_2(n), \quad 
(\set{\fc}_1 \overline{\vee} \set{\fc}_2)(n) = \set{\fc}_1(n) \cap \set{\fc}_2(n), \quad  
\forall \set{\fc}_1,\set{\fc}_2 \in \set{\fD}.$$ 
It is straightforward to verify that $\set{\fC}$, and hence $\fC$, are closed under the meet operation. On the other hand, the join of two closed ideal functions is not generally closed (see Example \ref{eg:join_not_closed}). We are, therefore, led to define the 
{\it closed join} operation:
$$\fc_1 \vee \fc_2 := \Psi \circ \Phi(\fc_1 
\overline\vee \fc_2).
$$

In Proposition \ref{prop:closed_join}, we will establish an explicit (albeit long) formula for the closed join. The meet and the closed join now give $\fC$ a structure of a lattice, which is not a sublattice of $\fD$. This is the lattice structure alluded to in item (\ref{itm:isom}) of our main theorem below.
 
\begin{theorem} (Main theorem) \label{thm:main}
Let $\Phi:\fD \rightarrow \fA$ and $\Psi:\fA \rightarrow \fD$ be as above, then:
\begin{enumerate}
\item \label{itm:GC} The maps $\Phi,\Psi$ form a (monotone) Galois connection between the two lattices $\fD,\fA$. Namely, for $\fc \in \fD, \cA \in \fA$ we have $\Phi(\fc) < \cA$ if and only if $\fc < \Psi(\cA)$. 
\item \label{itm:perfect} The connection is perfect on the algebra side. Namely $\cA = \Phi \circ \Psi(\cA)$ for every $\cA \in \fA$. 
\item \label{itm:closed_fc} $\fc = \Psi\circ \Phi(\fc)$ if and only if it is closed in the sense of Definition \ref{def:closed}.
\item \label{itm:isom} $\Phi\upharpoonright_{\fC}: \fC \rightarrow \fA$ is an isomorphism of lattices, with $\Psi$ as its inverse.
\end{enumerate}
\end{theorem}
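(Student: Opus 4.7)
The plan. The Galois connection in (\ref{itm:GC}) is essentially a tautology once one verifies that $\Psi(\cA)(n)$ really is a two-sided ideal of $\CZ$. The commutation computation $\eta\,e^{inx} = e^{inx}\sigma^{n}(\eta)$ for all $\eta \in \CZ$, where $\sigma$ is the canonical $\ast$-automorphism of $\CZ$ induced by $\Ad(u)$ (which, under the identification $\CZ\cong C(S^1)$, is the pullback by a rotation of $S^1$ by $\alpha$), shows that for $e^{inx}\eta \in \cA$ and any $\xi \in \CZ \subseteq \cA$ both $\xi \cdot e^{inx}\eta = e^{inx}\sigma^n(\xi)\eta$ and $e^{inx}\eta \cdot \xi = e^{inx}\eta\xi$ lie in $\cA$. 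Monotonicity of $\Phi,\Psi$ is immediate, and the adjunction $\Phi(\fc)\subseteq\cA \Leftrightarrow \fc \preceq \Psi(\cA)$ unpacks directly from the definitions, using that $\cA$ is norm-closed.

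The heart of the proof is to show that every $\cA \in \fA$ is automatically invariant under the gauge $S^1$-action $\gamma$ on $\cC$ defined by $\gamma_s(u)=u$ and $\gamma_s(e^{imx})=e^{ims}e^{imx}$ (the action induced by the rotations of $X$, which commute with $T$). A direct computation gives $u^k e^{imx}u^{-k} = e^{-ikm\alpha}e^{imx}$, so $\Ad(u^k) = \gamma_{-k\alpha}$. Since $u^k\in\CZ\subseteq\cA$, we get $\gamma_{-k\alpha}(\cA) = \cA$ for every $k\in\Z$; by irrationality of $\alpha/2\pi$ the set $\{-k\alpha \bmod 2\pi\}$ is dense in $S^1$, so strong continuity of $\gamma$ together with norm-closedness of $\cA$ promotes this to $\gamma_s(\cA) = \cA$ for every $s\in S^1$. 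With gauge invariance in hand, the Fourier projections
\begin{equation*}
P_m(a) \;:=\; \frac{1}{2\pi}\int_0^{2\pi} e^{-ims}\gamma_s(a)\,ds \;=\; e^{imx}\eta_m
\end{equation*}
land in $\cA$, so $\eta_m \in \Psi(\cA)(m)$. Ces\`aro--Fej\'er summation for the continuous $S^1$-action on $\cC$ yields $a=\lim_{N}\sigma_N(a)$ in norm, where $\sigma_N(a)=\sum_{|m|\le N}(1-\tfrac{|m|}{N+1})e^{imx}\eta_m \in \Phi\circ\Psi(\cA)$; closedness of $\Phi\circ\Psi(\cA)$ gives $\cA \subseteq \Phi\circ\Psi(\cA)$, and the reverse inclusion is immediate from (\ref{itm:GC}). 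This establishes (\ref{itm:perfect}).

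For (\ref{itm:closed_fc}), the forward direction translates multiplicative and $\ast$-closure of $\Phi(\fc)$ into properties of $\fc$: from $(e^{inx}\eta)^{\ast} = e^{-inx}\sigma^{-n}(\eta^{\ast}) \in \Phi(\fc)$ we get $\sigma^{-n}(\fc(n))\subseteq\fc(-n)$, which in terms of vanishing sets $\set{\fc}(n)$ is precisely $\cone$; and from $(e^{imx}\eta_m)(e^{inx}\eta_n) = e^{i(m+n)x}\sigma^n(\eta_m)\eta_n \in \Phi(\fc)$ we get $\sigma^n(\fc(m))\cdot\fc(n)\subseteq\fc(m+n)$, which (using that in $C(S^1)$ the closure of a product of ideals corresponds to the union of their vanishing sets) translates to $\ctwo$. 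For the converse, if $\fc$ satisfies $\cone,\ctwo$ these same identities show that the closed linear span of $\{e^{inx}\eta : n\in\Z, \eta\in\fc(n)\}$ is already a $\ast$-subalgebra containing $\CZ$, so $\Phi(\fc)$ equals this closed span. It is manifestly gauge invariant (the generators are eigenvectors of $\gamma$), and hence by the argument of the previous paragraph any $e^{imx}\eta \in \Phi(\fc)$ satisfies $P_m(e^{imx}\eta)=e^{imx}\eta$ is a norm limit of $P_m$ applied to finite linear combinations of generators, whose images lie in $e^{imx}\fc(m)$; norm-closedness of the ideal $\fc(m)$ then forces $\eta\in\fc(m)$.

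Finally, (\ref{itm:isom}) is a formal consequence. Applying (\ref{itm:closed_fc}) to $\fc = \Psi(\cA)$ (which satisfies $\fc = \Psi\Phi(\fc)$ by (\ref{itm:perfect})) shows $\Psi(\cA)\in\fC$ for every $\cA\in\fA$, and so $\Phi|_{\fC}$ and $\Psi|_{\fA}$ are inverse monotone bijections. The meet on $\fC$ corresponds to intersection in $\fA$ by the Fourier description: $a\in\cA_1\cap\cA_2$ iff every $\eta_m$ lies in $\Psi(\cA_1)(m)\cap\Psi(\cA_2)(m)$. The closed join matches the subalgebra join by the identity $\Phi(\fc_1\vee\fc_2)=\Phi(\Psi\circ\Phi(\fc_1\overline{\vee}\fc_2))=\Phi(\fc_1\overline{\vee}\fc_2)$, and the latter coincides with the closed $\ast$-subalgebra generated by $\Phi(\fc_1)\cup\Phi(\fc_2)$ by inspection of generators. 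The chief obstacle is the automatic gauge invariance of $\cA$ in the second paragraph; this is where the irrational rotation structure enters decisively and fails for rational or more general systems.
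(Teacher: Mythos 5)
Your proof is correct, but it takes a genuinely different route to the crux, part (\ref{itm:perfect}), than the paper does. The paper's argument for extracting Fourier coefficients is quite hands-on: Lemma \ref{operatorwithnormone} constructs, by choosing integers $k_j$ so that $e^{ik_j\alpha}$ approximates roots of unity, a contractive operator $L_{N,\epsilon}(a) = \sum_j \beta_j \lambda_{k_j} a \lambda_{k_j}^*$ that approximately isolates the $\pm q$ generalized Fourier coefficient; Proposition \ref{prop:Q} then uses this to show $Q_{q(a)}(a) \in \cA$; and Proposition \ref{prop:fourier_coef} separates the $+q$ and $-q$ pieces by a derivative trick, realizing $a' = \lim_j \tfrac{1}{\overline{n_j\alpha}}(\lambda_{n_j}a\lambda_{-n_j} - a)$. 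Your approach replaces all of this by a single conceptual step: every $\cA \in \fA$ contains $\CZ$, hence is $\Ad(\lambda_k)$-invariant; $\Ad(\lambda_k)$ coincides with the dual gauge action $\gamma_{k\alpha}$ on $\cC$ viewed as a crossed product over $\CZ \cong C(S^1)$ (the duality of Remark \ref{cor:symmetry}); by irrationality of $\alpha$, density of $\{k\alpha\}$ together with strong continuity of $\gamma$ and norm-closedness of $\cA$ upgrades this to full $S^1$-gauge-invariance, and then the Fourier projection $P_m(a) = \tfrac{1}{2\pi}\int_{S^1} e^{-ims}\gamma_s(a)\,ds$ lands in $\cA$ by closedness under Riemann integration. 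This gives the individual generalized Fourier coefficients $e^{imx}\overbracket{a}(m) \in \cA$ for all $m$ in one stroke.

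In fact the two arguments are morally the same averaging idea: the paper's $L_{N,\epsilon}$ is a weighted finite Ces\`aro-type average of conjugates $\lambda_{k_j}a\lambda_{k_j}^* = \gamma_{k_j\alpha}(a)$, approximating what you compute exactly by integrating over $S^1$. But your version is cleaner, needs no approximate roots of unity, and makes it transparent why part (\ref{itm:perfect}) holds and why it would fail for rational rotations or more general minimal systems -- exactly the observation you make in your last sentence. The remaining parts of your write-up, verifying that $\Psi(\cA)(n)$ is a two-sided ideal via the commutation relation (the paper's Lemma \ref{lem:alg_opp}), the Galois-connection formalities, and the two directions of (\ref{itm:closed_fc}), match the paper's arguments up to minor sign conventions (you have $\Ad(u)$ acting as $\tau^{-1}$ where the paper's Lemma \ref{lem:alg_opp} gives $\tau$; this is just a choice of orientation and does not affect anything). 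One small imprecision: in (\ref{itm:closed_fc}), the forward direction is really a consequence of the unconditional fact (the paper's Proposition \ref{cor:O_props}) that $\Psi(\cA)$ satisfies $\cone, \ctwo$ for every $\cA \in \fA$, applied to $\cA = \Phi(\fc)$; your phrasing mixes the hypothesis $\fc = \Psi\Phi(\fc)$ into the derivation of $\cone, \ctwo$ slightly awkwardly, but the logic is sound.
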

It is worthwhile noting what is easy and what requires new ideas in the above theorem. Once all the definitions are in place, the fact that $\Phi,\Psi$ form a Galois connection is clear. We invite the readers to verify that before proceeding. From this, it follows, by standard properties of Galois connections, that $\Phi,\Psi$ form an isomorphism between the sublattices of closed elements on both sides. The deeper part lies in showing that this Galois connection does not degenerate in the sense that closed elements are ubiquitous on both sides. In particular, the connection is perfect in the sense that all elements are closed on the algebra side. Indeed, it is a priory, not clear at all why a given intermediate algebra $\cA \in \fA$ should contain any element of the form $e^{inx} \eta 
\ (n \not = 0, \ 0 \neq \eta \in \CZ)$. 
For any given algebra $\cA \in \fA$, with an associated ideal function $\fc = \Psi(\cA) \in \fC$, we can state a similar classification for all closed two-sided ideals of $\cA$.  
\begin{definition} \label{def:c_closed}
  Given $\fc \in \fC$ a $\fc$-ideal function $\fj \in \fD_{\fc}$ will be called {\it{closed}} it satisfies the following two conditions:
    \begin{itemize}
    \item [$\cone(\fc)$] $\set{\fj}(-n) = \tau^{n}\set{\fj}(n), \ \forall n \in \Z$,
    \item [$\ctwoR{\fc}$] $\set{\fj}(m+n) \subset \tau^{-m} \set{\fc}(n) \cup \set{\fj}(m)$ for every $n \in \Supp(\fc), m \in \Supp(\fj)$.
\end{itemize}
\end{definition}
For $\fc \in \fD$ and $\cA \in \fA$ we define: 
$\Phi_{\fc}: \fD_{\fc} \rightarrow \fI_{\Phi(\fc)}$ and $\Psi: \fI_{\cA} \rightarrow \fD_{\Psi(\cA)}$ by:
$$\Phi_{\fc}(\fj) = \overline{\langle e^{inx} \eta \ | \ n \in \Z, \eta \in \fj(n)\rangle}, \quad \Psi_{\cA}(I) (n) = \{\eta \in \CZ \ | \ e^{inx} \eta \in I\}.$$
\begin{theorem} \label{thm:main_ideal}
Fix $\fc \in \fD$ and set $\cA = \Phi(\fc)$. 
\begin{enumerate}
\item \label{itm:GCj} The maps $\Phi_{\fc},\Psi_{\cA}$ form a (monotone) Galois connection between the two lattices $\fD_{\fc},\fI_{\cA}$.
\item \label{itm:perfectj} The connection is perfect at $\fI_{\cA}$.
\item \label{itm:closed_fcj} $\fj = \Psi_{\cA} \circ \Phi_{\fc}(\fj)$ if and only if it is closed in the sense of Definition \ref{def:c_closed}. 
\item \label{itm:isomj} $\Phi_{\fc}\upharpoonright_{\fC}: \fC_{\fc} \rightarrow \fI_{\cA}$ is an isomorphism of lattices, with $\Psi_{\cA}$ as its inverse.
\end{enumerate}
\end{theorem}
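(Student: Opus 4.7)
The plan is to mirror the proof of Theorem \ref{thm:main} in this relative/ideal setting, with the key observation that every closed two-sided ideal $I \lhd \cA$ is preserved by conjugation by the canonical generator $u \in \CZ \subset \cA$, since $u \in \cA$ and $I$ is two-sided. The Galois connection in item (\ref{itm:GCj}) falls out by a formal unwinding: $\Phi_{\fc}(\fj) \subset I$ is equivalent to $e^{inx}\eta \in I$ for every $n \in \Z$ and every $\eta \in \fj(n)$, which is precisely the condition $\fj \preceq \Psi_{\cA}(I)$. One also has to check that $\Psi_{\cA}$ actually lands in $\fD_{\Psi(\cA)}$, which agrees with $\fD_{\fc}$ once we pass to $\Psi \circ \Phi(\fc)$ in place of $\fc$ (allowed by Theorem \ref{thm:main}).

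For the perfectness at $\fI_{\cA}$ (item (\ref{itm:perfectj})), I plan to reuse the Fourier averaging from the proof of item (\ref{itm:perfect}) of Theorem \ref{thm:main}. For $a \in I$ and $n_0 \in \Z$, the Ces\`{a}ro means
\[
P_{n_0}(a) = \lim_{K\to\infty} \frac{1}{K}\sum_{k=0}^{K-1} e^{-in_0 k\alpha}\, u^k a u^{-k}
\]
converge in norm to the $n_0$-th isotypic component $e^{in_0 x} b_{n_0}(a)$ of $a$: conjugation by $u$ multiplies $e^{inx}$ by a fixed unit scalar and leaves $\CZ$ pointwise fixed, so the irrationality of $\alpha$ damps all other frequencies. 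Each Ces\`{a}ro summand lies in $I$ (since $u \in \cA$ and $I$ is two-sided), whence $P_{n_0}(a) \in I$, giving $b_{n_0}(a) \in \Psi_{\cA}(I)(n_0)$ and hence $P_{n_0}(a) \in \Phi_{\fc}(\Psi_{\cA}(I))$. Reassembling the isotypic pieces via a Fej\'{e}r-type sum (already available from the main theorem) recovers $a$, so $I \subset \Phi_{\fc} \circ \Psi_{\cA}(I)$; the reverse inclusion is the Galois connection.

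Item (\ref{itm:closed_fcj}) is then verified by matching the defining properties of $\fj$-closedness to the algebraic structure of $I \lhd \cA$. Condition $\cone(\fc)$ reflects the $*$-invariance of $I$: the identity $(e^{inx}\eta)^* = e^{-inx}\tau^{n}(\eta^*)$, coming from the paper's commutation relations between $e^{inx}$ and elements of $\CZ$, translates on zero sets to $\set{\fj}(-n) = \tau^n \set{\fj}(n)$. Condition $\ctwoR{\fc}$ reflects the two-sided ideal property: for $\eta_1 \in \fc(n)$ and $\eta_2 \in \fj(m)$, the product $(e^{inx}\eta_1)(e^{imx}\eta_2)$ can be rewritten (via the same commutation) as $e^{i(n+m)x}$ times an element of $\CZ$ whose zero set is $\tau^{-m}\set{\fc}(n) \cup \set{\fj}(m)$; since this product lies in $I$, we obtain $\set{\fj}(n+m) \subset \tau^{-m}\set{\fc}(n) \cup \set{\fj}(m)$. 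Conversely, a closed $\fj$ satisfying these two properties makes the raw linear span $\Span\{e^{inx}\eta : n \in \Z,\ \eta \in \fj(n)\}$ stable both under the $*$-operation and under left/right multiplication by the generators $\{e^{imx}\eta_1 : m \in \Z,\ \eta_1 \in \fc(m)\}$ of $\cA$; its norm closure $\Phi_{\fc}(\fj)$ is therefore a closed two-sided ideal whose $n$-th isotypic component, isolated by $P_n$, is exactly $\overline{e^{inx}\fj(n)}$, and applying $\Psi_{\cA}$ recovers $\fj$.

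Item (\ref{itm:isomj}) then follows from the perfectness on both sides by standard Galois-theoretic formalism; the meet on $\fC_{\fc}$ is inherited from $\fD_{\fc}$ (pointwise intersection), while the closed join is transported from the intersection of ideals in $\fI_{\cA}$ through $\Psi_{\cA} \circ \Phi_{\fc}$, in exact analogy with Proposition \ref{prop:closed_join}. The main obstacle I foresee is the norm convergence of the Ces\`{a}ro averages and the Fej\'{e}r reconstruction of an arbitrary $a \in I$ from its isotypic pieces; however, as this spectral machinery for the dual $S^1$-action is already established in the proof of Theorem \ref{thm:main}, the ideal case requires no genuinely new analytic ingredient, only the observation that $u \in \cA$ forces $u I u^{-1} \subset I$.
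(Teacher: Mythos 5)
Your proposal is correct and follows the paper's overall architecture (Galois connection $\Rightarrow$ perfectness at $\fI_\cA$ via Fourier coefficient extraction $\Rightarrow$ characterization of closed $\fc$-ideal functions via the $*$-closed two-sided ideal property $\Rightarrow$ formal lattice isomorphism), but the mechanism you use for item (\ref{itm:perfectj}) is genuinely different from the paper's. You attribute the Cesàro-averaging operator $P_{n_0}(a) = \lim_K \frac{1}{K}\sum_{k=0}^{K-1} e^{-in_0 k\alpha}\lambda_k a \lambda_k^*$ to the proof of Theorem \ref{thm:main}, but the paper does not construct this operator: its route to ``$e^{iqx}\overbracket{a}(q) \in \cA$'' goes through Lemma \ref{operatorwithnormone} (a carefully weighted finite sum $\sum_i \beta_i \lambda_{k_i}\,\cdot\,\lambda_{k_i}^*$ that kills a prescribed finite set of frequencies while preserving $q$), Proposition \ref{prop:Q}, and then a ``derivative'' trick in Proposition \ref{prop:fourier_coef} to separate $\pm q$. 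Your Cesàro operator is in fact a cleaner single-step replacement: each $A_K$ is a norm-one map preserving any subalgebra or two-sided ideal containing $\CZ$, $A_K(p) \to e^{in_0x}\overbracket{p}(n_0)$ for generalized trigonometric polynomials by the irrationality of $\alpha$, and a $3\epsilon$-argument using Fej\'er density and $\|A_K\|\le 1$ extends the convergence to all of $\cC$ — so it directly yields Proposition \ref{prop:fourier_coef} and bypasses the $L_{N,\epsilon}$/derivative machinery entirely. This is a legitimate and arguably more economical alternative, not merely a citation of existing machinery, and the authors themselves note in the acknowledgements that the current form of Proposition \ref{prop:Q} replaced a flawed original proof. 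Two small points worth tightening: for item (\ref{itm:GCj}) you correctly observe that $\fc$ must be replaced by its closure $\Psi\Phi(\fc)$ for $\Psi_\cA$ to land in $\fD_\fc$ (the paper's proof silently takes $\fc\in\fC$); and in item (\ref{itm:isomj}) the closed join on $\fC_\fc$ is transported from the \emph{closed sum} of ideals in $\fI_\cA$ (their join), not their intersection — the intersection is the meet, corresponding on the set-function side to union, and on the ideal-function side to pointwise intersection of ideals in $\CZ$.
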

As in the main theorem, the lattice structure on $\fC_{\fc}$ uses the closed join operation, which is defined as in Proposition \ref{prop:closed_join}.
\noindent
Let us turn to some applications of this structure theory. 
\begin{definition}
We say that an intermediate algebra $\cA \in \fA$ is {\it{residual}} if $\set{\Psi(\cA)}(n) \in \Cl(S^1)$ is nowhere dense for every $n \in \text{Supp}\left(\Psi(\mathcal{A})\right)$. An ideal $J \lhd \cA$ in a residual algebra 
is called a {\it{residual ideal}} if $\set{\Psi_{\cA}(J)}(n)$ is nowhere dense for every $n \in \text{Supp}\left(\set{\Psi_{\mathcal{A}}(J)}\right)$. We exclude the trivial algebra and the trivial ideal. These will not be considered residual even though they qualify formally because there is nothing nontrivial in their support.
\end{definition}
Here are some properties of these algebras following the classification (see Sub-section \ref{sub:residual}). 
\begin{theorem} \thlabel{cor:residual}
Let $\cA,\{\cA_i\}_{i \in I} \in \fA$ be residual algebras and $(0) \ne J \lhd \cA$ a closed two sided nontrivial ideal. Let $\fc,\{\fc_i\}_{i \in I},\fj$ be the associated ideal functions. Set 
$$\Omega = \bigcup_{m,n \in \Supp(\fc)} \tau^{-m} \set{\fc}(n)$$ Then 
\begin{enumerate}
    \item \label{itm:vee} 
    Every intermediate algebra containing a residual one is also residual.
    \item \label{itm:wedge} $\cA_1 \wedge \cA_2$ is residual. 
    \item \label{itm:supp_gp} $\Supp(\fc) = \{n \in \Z \ | \ \fc(n) \ne (0)  \}$ is a subgroup of $\Z$. 
    \item \label{itm:supp_ideal} $\Supp(\fc)= \Supp(\fj)$.
    \item \label{itm:residual_ideal} $\set{\fj}(k) \subset \Omega$ for every $k \in \mathbb{Z}$
    and in particular every non trivial ideal $J \lhd \cA$ is residual. 
    \item \label{itm:center_free} $\cA$ is center free. 
 \end{enumerate}
Property (\ref{itm:center_free}) holds more generally, even if we assume that $\Supp(\fc)(n)$ is nowhere dense for one $0 \ne n \in \Supp(\fc)$.
\end{theorem}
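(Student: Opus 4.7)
The plan is to establish items (3), (4), and (6) by reasonably direct arguments, to tackle the real technical content at (5), and then to derive (1) and (2), with (1) reusing the self-referential machinery developed for (5). Two mechanisms drive every step: Baire category in $S^1$ (a countable union of closed nowhere dense sets is meager, and for a closed $C$ and a meager $M$ in $S^1$ one has $\mathrm{int}(C\cup M)=\mathrm{int}(C)$), and the minimality of $\tau^m$ for every $0\ne m\in\Z$ (any closed $\tau^m$-invariant subset of $S^1$ is $\emptyset$ or $S^1$, because $\{km\alpha\bmod 2\pi:k\in\Z\}$ is already dense).

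For (3), $\cone$ gives closure of $\Supp(\fc)$ under negation, and $\ctwo$ gives closure under addition: for $m,n\in\Supp(\fc)$, $\set{\fc}(m+n)\subseteq\tau^{-m}\set{\fc}(n)\cup\set{\fc}(m)$ is a union of two closed nowhere dense sets, hence nowhere dense, hence proper. For (4), $\Supp(\fj)\subseteq\Supp(\fc)$ is immediate; for the reverse, Theorem \ref{thm:main_ideal}(\ref{itm:perfectj}) gives $\fj\ne 0$, so we may fix $n_0\in\Supp(\fj)$. For any $m\in\Supp(\fc)$, the product $(e^{imx}\xi)(e^{in_0 x}\eta) = e^{i(m+n_0)x}(\widetilde\xi\,\eta)$ lies in $J$, with $\widetilde\xi\in\CZ$ a $\tau$-translate of $\xi$; choosing $\xi\in\fc(m)$ and $\eta\in\fj(n_0)$ both nonzero on the open dense (by Baire) complement of $\set{\fc}(m)\cup\tau^{-n_0}\set{\fj}(n_0)$ forces $\widetilde\xi\,\eta\ne 0$ and so $m+n_0\in\Supp(\fj)$. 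Combined with $\cone$-closure, $\Supp(\fj)$ contains $\pm n_0+\Supp(\fc)$, hence $0$, hence all of $\Supp(\fc)$. For (6), ergodicity of $T$ on $X$ implies that the commutant of $\CZ$ in $\cC$ equals $\CZ$, so every $z\in Z(\cA)$ lies in $\CZ\cong C(S^1)$. Commutation of $z$ with $e^{imx}\eta$, together with the crossed-product identity $u\,e^{imx}=e^{-im\alpha}\,e^{imx}\,u$, yields $\eta(t)\bigl(z(t-m\alpha)-z(t)\bigr)=0$ for all $t\in S^1$; taking $\eta\in\fc(m)$ with zero set exactly the nowhere dense $\set{\fc}(m)$ forces $z(t-m\alpha)=z(t)$ on an open dense set, hence on $S^1$ by continuity. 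Thus $z$ is invariant under rotation by $m\alpha$ for every $m\in\Supp(\fc)$; irrationality of $\alpha$ and nontriviality of $\Supp(\fc)$ make $\{m\alpha\bmod 2\pi\}$ dense in $S^1$, so $z$ is constant. The refinement of (6) needs only a single $0\ne m\in\Supp(\fc)$ with $\set{\fc}(m)$ nowhere dense, since $\{km\alpha\}$ is already dense.

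The technical heart is (5): proving $\set{\fj}(k)\subseteq\Omega$ for every $k\in\Supp(\fj)=\Supp(\fc)$. Residuality of $J$ then follows because $\Omega$ is meager, so every closed subset of $\Omega$ has empty interior. Two applications of $\ctwoR{\fc}$ together with $\cone(\fc)$, exploiting $\tau^{n-k}\Omega\subseteq\Omega$ (valid because $\Omega$ is $\Supp(\fc)$-translation invariant), yield for every $n\in\Supp(\fc)$ the pair $\set{\fj}(k)\subseteq\Omega\cup\set{\fj}(k-n)$ and $\set{\fj}(k-n)\subseteq\Omega\cup\tau^n\set{\fj}(k)$. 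Passing to interiors, the Baire identity above gives $\mathrm{int}(\set{\fj}(k))\subseteq\tau^n\mathrm{int}(\set{\fj}(k))$; negation-closure of $\Supp(\fc)$ makes $\mathrm{int}(\set{\fj}(k))$ $\tau^n$-invariant for every $n\in\Supp(\fc)$, hence $\tau$-invariant by density, hence $\emptyset$ or $S^1$. The $S^1$ option forces $\set{\fj}(k)=S^1$ and therefore $k\notin\Supp(\fj)$, a contradiction, so each $\set{\fj}(k)$ is nowhere dense. The closed nowhere dense set $B:=\bigcap_{k'\in\Supp(\fc)}\set{\fj}(k')$ satisfies $B\subseteq\Omega\cup\tau^n B$ (the second inclusion intersected over $k'$); if $s_0\in B\setminus\Omega$, then $\{\tau^{-n}s_0:n\in\Supp(\fc)\}\subseteq B$ would be dense in $S^1$ --- contradicting $B$ nowhere dense --- so $B\subseteq\Omega$. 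Combining with $\set{\fj}(k)\subseteq\Omega\cup B$ (first inclusion intersected over $n$) yields $\set{\fj}(k)\subseteq\Omega$. The asymmetric bookkeeping between $\Supp(\fj)$- and $\Supp(\fc)$-indices in $\ctwoR{\fc}$, and the iterated use of $\cone$ to produce the second inclusion, is the main obstacle.

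Item (1) follows by applying the interior-invariance argument of the previous paragraph with $\fc'$ playing the role of $\fj$. For $n\in\Supp(\fc)\subseteq\Supp(\fc')$, $\set{\fc'}(n)\subseteq\set{\fc}(n)$ is directly nowhere dense. For $n\in\Supp(\fc')\setminus\Supp(\fc)$, the same pair of inclusions, derived from $\ctwo$ and $\cone$ for $\fc'$ using that $\set{\fc'}(m)\subseteq\set{\fc}(m)$ is nowhere dense for $m\in\Supp(\fc)$, forces $\mathrm{int}(\set{\fc'}(n))$ to be $\tau^m$-invariant for every such $m$, hence $\emptyset$ or $S^1$; the $S^1$ option is ruled out by $n\in\Supp(\fc')$, so $\set{\fc'}(n)$ is nowhere dense. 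Item (2) is immediate: $\set{\fc_1\wedge\fc_2}(n)=\set{\fc_1}(n)\cup\set{\fc_2}(n)$ gives $\Supp(\fc_1\wedge\fc_2)=\Supp(\fc_1)\cap\Supp(\fc_2)$, on which both $\set{\fc_i}(n)$ are nowhere dense, so their union is too; the trivial-algebra case is excluded by convention in the definition of residual.
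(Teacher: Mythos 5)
Your proof is correct in substance (up to a couple of harmless sign slips, such as $u\,e^{imx} = e^{im\alpha}e^{imx}u$ rather than $e^{-im\alpha}$, and a transposed translate in the set whose complement you use in item (4)), and in broad outline it follows the same mechanisms the paper uses: Baire category in $S^1$, minimality of $\tau^r$, and the identities $\cone$, $\ctwo$, $\ctwoR{\fc}$. But you diverge from the paper's argument in a few noticeable ways.

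For item (4), the paper is considerably more direct. It applies $\ctwoR{\fc}$ once, at $(m,n)=(k,0)$, to get $\set{\fj}(k) \subset \tau^{-k}\set{\fj}(0) \cup \set{\fc}(k)$, which is automatically a proper subset of $S^1$ because $\set{\fj}(0)$ is proper closed (ideal nontrivial) and $\set{\fc}(k)$ is nowhere dense. Your multiplicative argument -- fixing $n_0 \in \Supp(\fj)$, multiplying $e^{imx}\hat\xi \cdot e^{in_0x}\hat\eta$, and forcing a nonzero product -- reconstructs at the operator level exactly what $\ctwoR{\fc}$ already encodes, and then has to appeal to item (3) and symmetry of $\Supp(\fj)$ to finish. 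It works, but the paper's version with $n_0=0$ is strictly shorter.

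For item (5), the paper goes straight for $M=\set{\fj}(0)$, which coincides with your $B$ via Proposition \ref{prop:prop}, derives $M \subset \tau^{-k}M\cup\set{\fc}(k)$ for every $k$, shows $M\setminus\Omega$ is $\tau^r$-invariant, and kills it by minimality using only that $M$ is proper closed. The residuality of the ideal then drops out of $\set{\fj}(k)\subset\Omega$ for free. Your extra preliminary step -- the interior-invariance argument showing each $\set{\fj}(k)$ is nowhere dense before attacking $B$ -- is logically sound but not needed by the paper's route, which never separately establishes nowhere-density of the individual $\set{\fj}(k)$.

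For item (1), however, your interior-invariance argument does real work that the paper's one-line disposal (``the class of residual algebras is closed under arbitrary joins'') does not visibly cover. The delicate case is $n \in \Supp(\fc')\setminus\Supp(\fc)$: there $\set{\fc'}(n)\subset\set{\fc}(n)=S^1$ gives no control, and no immediate appeal to the join formula (where terms involving $\fc'$ need not be nowhere dense) closes it. Your argument, forcing $\mathrm{int}(\set{\fc'}(n))$ to be $\tau^m$-invariant for $m\in\Supp(\fc)$ and hence empty by minimality, is exactly the right idea and is a natural reuse of the machinery you already developed for (5). Items (2), (3) and (6), including the refinement requiring only one nowhere dense $\set{\fc}(n)$, match the paper essentially verbatim.
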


Residual algebras should be thought of as large intermediate subalgebras. At the other extreme stand the algebras highlighted in the following definition:
\begin{definition}
An intermediate algebra $\cA \in \cA$ with associated ideal function $\fc = \Psi(\cA)$ is called {\it{small}} if $\Supp(\fc)$ is finite. 
\end{definition}
Here are some properties of small intermediate subalgebras. 
\begin{prop} \thlabel{cor:small}
Let $\cA_1,\cA_2,\cA \in \fA$ be small subalgebras $\fc_1,\fc_2,\fc$ the corresponding ideal functions, then
\begin{enumerate}
    \item \label{itm:nonempty_int} $\fc(n)$ has a nonempty interior for every $0 \ne n$. In particular, a small subalgebra is never residual. 
    \item \label{itm:basic_dichotomy} A basic subalgebra $\cA_{q, P}$ (see Example \ref{eg:prim} and Subsection \ref{sec:basic}) is small if and only if it is not residual. 
    \item \label{itm:small_fd} $\cA$ is finite dimensional as a module over $\CZ$. 
\item \label{itm:small_center}
Many small algebras admit a non-trivial center.
\item  \label{itm:join_not_small}
There are small algebras $\cA_1,\cA_2 \in \fA$ such that
$\cA_1 \vee \cA_2$ is not small.
\end{enumerate}
\end{prop}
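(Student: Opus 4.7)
The core of Proposition \ref{cor:small} is (\ref{itm:nonempty_int}); once established, the rest follows from structural considerations combined with explicit constructions. For (\ref{itm:nonempty_int}) my plan is to combine condition $\ctwo$ of Definition \ref{def:closed} with the Baire category theorem on $S^1$. Fix $n \in \Supp(\fc) \setminus \{0\}$, and because $\Supp(\fc)$ is finite, let $K \geq 2$ be the smallest integer with $Kn \notin \Supp(\fc)$, so $\set{\fc}(Kn) = S^1$. Assuming for contradiction that $\set{\fc}(n)$ is nowhere dense, I would use $\ctwo$ applied iteratively to the pair $(jn, n)$ to prove by induction on $j$ that $\set{\fc}(jn)$ is nowhere dense for $j = 1, \ldots, K-1$. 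A final application of $\ctwo$ with $(m,n) = ((K-1)n, n)$ then gives
\[
  S^1 \;=\; \set{\fc}(Kn) \;\subset\; \tau^{-(K-1)n}\set{\fc}(n) \;\cup\; \set{\fc}((K-1)n),
\]
which writes $S^1$ as a finite union of closed nowhere dense sets, contradicting Baire. Hence $\set{\fc}(n)$ has nonempty interior, and $\cA$ is not residual.

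The next two items are then essentially formal. For (\ref{itm:basic_dichotomy}), the ``only if'' is (\ref{itm:nonempty_int}); the converse reduces to an analysis of basic subalgebras $\cA_{q,P}$ as parameterised in Subsection \ref{sec:basic}, where the support of $\Psi(\cA_{q,P})$ is explicitly determined by the data $(q,P)$, and combining with Theorem \ref{cor:residual}(\ref{itm:supp_gp}) yields the finite-versus-subgroup dichotomy. For (\ref{itm:small_fd}), Theorem \ref{thm:main}(\ref{itm:perfect}) gives
\[
  \cA \;=\; \Phi\circ\Psi(\cA) \;=\; \overline{\langle e^{inx}\eta \ | \ n \in \Z,\; \eta \in \fc(n)\rangle},
\]
and since $\fc(n) = (0)$ for $n \notin \Supp(\fc)$ this reduces to the closed direct sum $\bigoplus_{n \in \Supp(\fc)} e^{inx}\fc(n)$ with only finitely many nonzero summands, giving the claimed finite $\Z$-grading over $\CZ$.

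Items (\ref{itm:small_center}) and (\ref{itm:join_not_small}) are handled by construction. For (\ref{itm:small_center}), I would first observe that any element central in $\cA$ must lie in $\CZ$: writing $z = \sum_{n} e^{inx}\zeta_n$ with $\zeta_n \in \fc(n)$, the commutator $[U, e^{inx}\zeta_n] = (e^{in\alpha}-1)e^{inx}\zeta_n U$ combined with the irrationality of $\alpha$ forces $\zeta_n = 0$ for $n \ne 0$. A routine Fourier calculation then identifies the centre of $\cA$ with those $\phi \in C(S^1) \cong \CZ$ satisfying $\phi \circ \tau^m = \phi$ on $S^1 \setminus \set{\fc}(m)$ for every $m \in \Supp(\fc)$. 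When $\Supp(\fc) = \{-n_0, 0, n_0\}$, the covering condition $\set{\fc}(n_0) \cup \tau^{-n_0}\set{\fc}(n_0) = S^1$, which is forced by $\ctwo$ with $(m,n) = (n_0, n_0)$, sends $S^1 \setminus \set{\fc}(n_0)$ into $\set{\fc}(n_0)$ under $\tau^{n_0}$, so prescribing any continuous $\phi$ on $\set{\fc}(n_0)$ compatibly at the boundary yields an infinite-dimensional space of central elements. For (\ref{itm:join_not_small}), the plan is to take small algebras $\cA_1, \cA_2$ with coprime supports, say $\Supp(\fc_1) = \{-1,0,1\}$ and $\Supp(\fc_2) = \{-2,0,2\}$, and with $K_i = \set{\fc_i}(n_i)$ chosen so that the complements $K_i^c$ are short, well-placed arcs; then an application of $\ctwo$ inside the closure $\Psi\circ\Phi(\fc_1 \overline{\vee} \fc_2)$ with $(m,n) = (1,2)$ shows that $\set{\fc_1\vee\fc_2}(3) \subset \tau^{-1}K_2 \cup K_1$ is a proper subset of $S^1$, so $3 \in \Supp(\fc_1\vee\fc_2)$. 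The \emph{main obstacle} is then to ensure this process iterates: one must choose the $K_i$ carefully (exploiting the minimality and equidistribution of $\tau$) so that for infinitely many $k$ the sets generated by successive applications of $\ctwo$ remain proper subsets of $S^1$; the explicit formula for the closed join in Proposition \ref{prop:closed_join} should make this verifiable.
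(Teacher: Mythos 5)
Your argument for item (\ref{itm:nonempty_int}) is correct but takes a different route from the paper. You apply $\ctwo$ directly: fixing $n\in\Supp(\fc)\setminus\{0\}$ and the smallest $K$ with $Kn\notin\Supp(\fc)$, induction shows each $\set{\fc}(jn)$ ($1\le j\le K-1$) is nowhere dense if $\set{\fc}(n)$ is, and the final application $\set{\fc}(Kn)=S^1\subset\tau^{-(K-1)n}\set{\fc}(n)\cup\set{\fc}((K-1)n)$ violates Baire. The paper instead deduces (\ref{itm:nonempty_int}) structurally: if $\set{\fc}(q)$ were nowhere dense then $\fb_{q,\set{\fc}(q)}\preceq\fc$, so $\cA$ contains the basic residual algebra $\cA_{q,\set{\fc}(q)}$, hence by Theorem \ref{cor:residual}(\ref{itm:vee}) $\cA$ is residual, and residual algebras have subgroup (hence infinite) support by (\ref{itm:supp_gp}). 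Both proofs ultimately rest on Baire; yours is more self-contained, the paper's leans on the already-developed residual theory. Item (\ref{itm:small_fd}) is handled the same way as the paper, and your construction for (\ref{itm:small_center}) generalizes the paper's Example \ref{exa:center} in the right spirit.

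However, your treatment of (\ref{itm:basic_dichotomy}) has a gap. You invoke Theorem \ref{cor:residual}(\ref{itm:supp_gp}) to obtain a ``finite-versus-subgroup dichotomy,'' but that item only says the support of a \emph{residual} algebra is a subgroup; it gives you nothing for the converse direction, where $\cA_{q,P}$ is \emph{not} residual. What is actually needed: if $P$ has nonempty interior, then by minimality of $\tau$ on $S^1$ the sets $P\cup\tau^{-q}P\cup\dots\cup\tau^{-q(N-1)}P=\set{\fb}_{q,P}(Nq)$ eventually cover $S^1$, whence by the explicit formula for $\fb_{q,P}$ the support is finite and $\cA_{q,P}$ is small.

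For (\ref{itm:join_not_small}) you honestly flag that your construction is incomplete — you need to control $\set{\fc_1\vee\fc_2}(k)$ for infinitely many $k$ via iterated uses of $\ctwo$, which requires a delicate choice of $K_1,K_2$ that you do not pin down. The paper sidesteps all of this with a two-line example: take $\cA_1=\cA_{1,P}$ and $\cA_2=\cA_{1,Q}$ with $P,Q$ closed, of nonempty interior, and \emph{disjoint}. Both are small by (\ref{itm:basic_dichotomy}), while $\set{\fc_1\vee\fc_2}(1)\subset\set{\fc_1}(1)\cap\set{\fc_2}(1)=P\cap Q=\emptyset$, so $e^{ix}\in\cA_1\vee\cA_2$ and the join is the whole of $\cC$, whose support is all of $\Z$. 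I would strongly recommend replacing your iterative construction with this one.
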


In Subsection \ref{sub:simple}, we use our machinery to study the simplicity of intermediate algebras. We give examples of some algebras that are simple and others that are not. A partial characterization for simplicity is given
in Propositions \ref{prop:not-simple} and \ref{prop:necessary}.

\begin{remark} \label{rem:symmetry}
{\bf The universal property of $\mathcal{C}$:}
Contrary to what the notation $\cC = C(X) \rtimes_{r} \Z$ 
seems to imply there is a symmetry between the two factors, $C(X)$ and $C^*(\mathbb{Z})$,
of the crossed product. This is realized by an involutive automorphism $\iota: \cC \rightarrow \cC$ with the property that $\iota C(X)=\CZ$. 
When $\mathcal{C}=\mathcal{C}_\alpha$ is viewed as a crossed product over the algebra $\CZ$ it is isomorphic to $\mathcal{C}_{-\alpha}$.

It follows that all our results concerning
the intermediate subalgebras $\CZ < \cA < \cC$, are also valid 
for the analogous intermediate subalgebras
$C(X) < \mathcal{A} < \mathcal{C}$,
when they are reformulated with respect 
to the rotation $R_{-\alpha}$. See Remark 
\ref{cor:symmetry} below.
\end{remark}

To sum up, we have: 
\begin{theorem}\label{not-crossedZ}
\thlabel{notcrossedprodZ}
There are uncountably many intermediate subalgebras $\CZ < \cA < \cC$ which are not of the form $\mathcal{A}= C(Y) \rtimes_r \Z$, with
$X \to Y$ a dynamical factor of the system $(X,R_\alpha)$.
They are indexed by the system of ideal functions
$\mathfrak{C}$ as described in Theorem \ref{thm:main}.
 \end{theorem}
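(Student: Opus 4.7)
The plan is to combine the classification in Theorem \ref{thm:main} with the classical description of topological factors of an irrational circle rotation. By Theorem \ref{thm:main}(\ref{itm:isom}), $\Phi\upharpoonright_{\fC}: \fC \to \fA$ is a bijection of lattices, so it suffices to show two things: (a) only countably many elements of $\fA$ are of the form $C(Y) \rtimes_r \Z$ coming from a dynamical factor $(X, R_\alpha) \to (Y, R_\alpha)$, and (b) $\fC$ itself is uncountable.

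Part (a) is a standard structural result. Since $(X, R_\alpha)$ is a minimal equicontinuous (Kronecker) system, by the Halmos--von Neumann theorem its topological factors are classified by the closed subgroups of the Kronecker group $X = \mathbb{R}/2\pi\mathbb{Z}$. The non-trivial ones are the finite cyclic subgroups, which yield precisely the $n$-fold covering factor maps $\pi_n$ for $n \in \mathbb{N}$ and the associated intermediate crossed products $\cC^n$. This family is countable.

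For part (b), which is the substantive step, we construct an uncountable family of closed ideal functions using the basic subalgebras $\cA_{1, P}$ of Example \ref{eg:prim}, indexed by nonempty proper closed subsets $P \subsetneq S^1$. For such a $P$, let $I_P \lhd \CZ$ be the ideal with zero set $P$ under the Fourier identification $\CZ \cong C(S^1)$, and let $\cA_{1, P} \in \fA$ be the smallest intermediate $C^*$-algebra containing $\CZ$ and $e^{ix} I_P$. Using the commutation $\eta \cdot e^{ix} = e^{ix}(\tau\eta)$ inside $\cC$, one computes by induction that the associated ideal function $\fc_P := \Psi(\cA_{1, P})$ satisfies $\set{\fc}_P(n) = \bigcup_{j=0}^{n-1} \tau^{-j} P$ for all $n \geq 1$, with the values at negative $n$ forced by $\cone$. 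Crucially $\set{\fc}_P(1) = P$, so the map $P \mapsto \fc_P$ is injective on closed subsets of $S^1$. Letting $P = \{z_0\}$ range over singletons in $S^1$ already yields an uncountable family in $\fC$.

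To finish, observe that for any nonempty proper closed $P \subsetneq S^1$, the algebra $\cA_{1, P}$ cannot coincide with any $\cC^n$, since the ideal function of $\cC^n$ satisfies $\set{\Psi(\cC^n)}(1) \in \{\emptyset, S^1\}$ while $\set{\fc}_P(1) = P$ is a proper nonempty closed set. The fact that these algebras are indexed by $\fC$ is precisely the content of Theorem \ref{thm:main}(\ref{itm:isom}). The main technical obstacle will be confirming that the inductive formula for $\set{\fc}_P(n)$ genuinely satisfies the closure conditions $\cone$ and $\ctwo$, so that no additional elements are forced into $\fc_P(1)$ upon applying $\Psi \circ \Phi$; this reduces to a routine verification of set-theoretic identities involving iterated shifts $\tau^{-j} P$ and the observation that products of generators of $\cA_{1, P}$ that reduce to elements of $e^{ix}\CZ$ always produce Fourier coefficients in $I_P$ itself.
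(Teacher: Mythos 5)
Your proposal is correct and matches the paper's intended argument, which is left implicit since Theorem \ref{not-crossedZ} is stated as a summary ("To sum up, we have:") after the machinery is built. The key pieces you use are exactly the ones the paper develops for this purpose: Theorem \ref{thm:main}(\ref{itm:isom}) gives the bijection $\fC \leftrightarrow \fA$; Example \ref{eg:dynamical_factors} (together with Suzuki's theorem) identifies the crossed-product intermediate algebras as the countable family $\{\cC^q\}_{q\in\N}$, indexed by the ideal functions $\fb_{q,\CZ}$ whose values at $n\ne 0$ are only $\emptyset$ or $S^1$; and Example \ref{eg:prim} together with Proposition \ref{prop:prim_alg_gen} establishes that each $\fb_{1,P}$ is a closed ideal function with $\set{\fb}_{1,P}(1) = P$, so $P \mapsto \cA_{1,P}$ is injective and avoids the crossed-product family whenever $P$ is nonempty and proper. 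The one step you label as needing verification — that $\fb_{1,P}$ genuinely satisfies $\cone$ and $\ctwo$ — is asserted in Example \ref{eg:prim} via the criterion of Note \ref{note:pos_gen}, so you are not re-deriving anything new there, merely pointing to where the paper handles it. Your use of the Halmos--von Neumann classification for part (a) is a slightly more classical route than the paper's appeal to Suzuki's theorem, but the two yield the same countable list of factors for an irrational rotation, so the argument is interchangeable.
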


The fact that the crossed product $C(X) \rtimes_r \mathbb{Z}$
admits intermediate sub-algebras, which are not themselves crossed products,
are not at all special for $\mathbb{Z}$ dynamical systems. Let $(X,\Gamma)$ be a dynamical system that admits an invariant probability measure of full support. For such a dynamical system $(X,\Gamma)$, in Section \ref{sec:not-simple}, we establish that to every non-trivial ideal $I < C^*_r(\Gamma)$, corresponds
an intermediate subalgebra $\mathcal{B}_I$,
$$
C^*_r(\Gamma) \subset \mathcal{B}_I \subset C(X) \rtimes_r \Gamma,
$$
such that $\mathcal{B}_I\cap C(X)=\mathbb{C}$.
Such a non-trivial ideal exists if and only if $\Gamma$ is not $C^*$-simple. Moreover, this construction is also valid for general $\Gamma$-$C^*$-algebras $\mathcal{A}$ as long as $\mathcal{A}$ admits a faithful $\Gamma$-invariant state. However, to conclude that $\mathcal{B}_I$ is not a crossed product $C^*$-subalgebra, we need additional assumptions on the group $\Gamma$, or the $C^*$-algebra $\cA$.
More precisely,
we have:

\begin{theorem} \label{not-crossed}
\thlabel{notcrossedprod}
Let $\Gamma$ be a discrete group that is not $C^*$-simple and $\mathcal{A} \ne \C$ a $\Gamma$-$C^*$-algebra admitting a faithful $\Gamma$-invariant state. Assume that at least one of the following conditions holds:
\begin{itemize}
\item $\cA = C(X)$ with $\abs{X} \ge 3$. 
\item $\Gamma_f \ne \nicefrac{\Z}{2\Z}$, where $\Gamma_f$ denotes the FC-center of $\Gamma$.
\end{itemize}
Then, there is an intermediate $C^*$-algebra  $C_r^*(\Gamma)\subset\mathcal{D}\subset \mathcal{A}\rtimes_r\Gamma$ which is not of the form 
$\mathcal{D}= \mathcal{B} \rtimes_r \Gamma$ for any $\Gamma$-invariant subalgebra
$\mathcal{B} \subset \mathcal{A}$.
\end{theorem}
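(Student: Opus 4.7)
The plan is to set $\mathcal{D}=\mathcal{B}_I$ for a suitable nontrivial ideal $I\lhd C^*_r(\Gamma)$, using the construction from Section~\ref{sec:not-simple}. Such an $I$ exists because $\Gamma$ is not $C^*$-simple, and the construction produces
\[
C^*_r(\Gamma)\subset \mathcal{B}_I\subset \mathcal{A}\rtimes_r\Gamma,\qquad \mathcal{B}_I\cap \mathcal{A}=\mathbb{C}\cdot 1.
\]
The first step is the following formal reduction: if $\mathcal{D}=\mathcal{B}\rtimes_r\Gamma$ for some $\Gamma$-invariant $C^*$-subalgebra $\mathcal{B}\subset\mathcal{A}$, the canonical faithful conditional expectation $E\colon \mathcal{A}\rtimes_r\Gamma\to\mathcal{A}$ is the identity on $\mathcal{A}$ and carries $\mathcal{B}\rtimes_r\Gamma$ onto $\mathcal{B}$, so $\mathcal{B}=\mathcal{D}\cap\mathcal{A}$. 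Combined with $\mathcal{B}_I\cap\mathcal{A}=\mathbb{C}$ this forces $\mathcal{B}=\mathbb{C}$, and hence $\mathcal{D}=C^*_r(\Gamma)$. The contradiction will follow once we verify that $\mathcal{B}_I\ne C^*_r(\Gamma)$ for some choice of $I$.

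The remaining non-degeneracy statement is the main obstacle, and is exactly where the two hypotheses enter. To attack it I would inspect the construction in Section~\ref{sec:not-simple}: $\mathcal{B}_I$ is generated over $C^*_r(\Gamma)$ by elements of the form $a\,u_g$ whose position is controlled by the ideal $I$ through the conditional expectation $E_\mu\colon \mathcal{A}\rtimes_r\Gamma\to C^*_r(\Gamma)$ coming from the $\Gamma$-invariant state $\mu$. The task reduces to producing one such element outside $C^*_r(\Gamma)$, which amounts to finding a non-scalar $a\in\mathcal{A}$ with $\mu(a)=0$ together with a suitable group element $g$ so that the resulting combination is genuinely new.

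In the commutative case $\mathcal{A}=C(X)$, the hypothesis $|X|\ge 3$ furnishes a non-scalar $f\in C(X)$ with $\mu(f)=0$; paired with any nontrivial $g\in\Gamma$, this provides an element of $\mathcal{B}_I\setminus C^*_r(\Gamma)$, as detected by evaluating against an appropriate point-evaluation character of $C(X)$ (which extends through the faithful conditional expectation $E$). In the general case, the hypothesis $\Gamma_f\ne\Z/2\Z$ rules out the single exceptional configuration in which every candidate perturbation coming from FC-elements collapses into $C^*_r(\Gamma)$: one can then select either an FC-element of order different from $2$ or a non-central FC-element and build the required new element of $\mathcal{B}_I$. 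Once this non-triviality is secured under either hypothesis, the formal reduction above completes the proof. The delicate point, and hence the genuine obstacle, is precisely to show that both candidate degeneracies are excluded exactly by the two listed cases, so that the exceptional pair $(|X|\le 2,\ \Gamma_f=\Z/2\Z)$ is the only one left out.
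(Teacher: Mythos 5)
The high-level plan you describe (take $\mathcal{D}=\mathcal{B}_I$, observe that $\mathcal{D}=\mathcal{B}\rtimes_r\Gamma$ would force $\mathcal{B}=\mathcal{D}\cap\mathcal{A}=\C$ and hence $\mathcal{D}=C_r^*(\Gamma)$, then show $\mathcal{B}_I\ne C^*_r(\Gamma)$) is indeed the paper's strategy, and your formal reduction is correct. But the nondegeneracy $\mathcal{B}_I\ne C^*_r(\Gamma)$ is the entire content of the theorem, and your proposal acknowledges but does not close this gap; moreover, the sketch you give of how to close it is not correct in the case that matters most.

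The central omission is the i.c.c.\ case $\Gamma_f=\{e\}$. Under the hypothesis $\Gamma_f\ne\Z/2\Z$, you must in particular handle i.c.c.\ groups, where there are \emph{no} nontrivial FC-elements to ``select,'' so the strategy you describe (``select either an FC-element of order different from $2$ or a non-central FC-element'') is vacuous. The paper's treatment (\thref{i.c.c-intermediate}) works with an arbitrary nontrivial ideal $I\lhd C^*_r(\Gamma)$ and hinges on a separation lemma (\thref{lem:anelementseparating}: for finite $F\subset\Gamma\setminus\{e\}$ there is $t$ with $tFt^{-1}\cap F=\emptyset$) together with an $\varepsilon$-approximation argument comparing $\eta\,a\,\lambda(t)\eta^*\lambda(t^{-1})$ to a scalar multiple of $a$ under $\mathbb{E}$. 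Nothing like this appears in your outline. In the remaining case $|\Gamma_f|>2$, the paper does not take a generic ideal and then ``select'' an FC-element; it uses the specific augmentation ideal $J$ of $C^*_r(\Gamma_f)$ (nontrivial because $\Gamma_f$ is amenable, \thref{amenable}), extends it to $I_J\lhd C^*_r(\Gamma)$ via \thref{biggeridealassociatedtoasmallerone}, and then computes $\mathbb{E}\bigl((\lambda(s)-\lambda(e))(-a)(\lambda(e)-\lambda(t))\bigr)=a$ for $s,t\in\Gamma_f\setminus\{e\}$ with $st\ne e$ (the condition ``$|\Gamma_f|>2$'' is exactly what makes such $s,t$ available; ``non-central FC-element'' plays no role). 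For $\Gamma_f=\Z/2\Z$ and $\mathcal{A}=C(X)$ with $|X|\ge 3$, the relevant computation is Example~\ref{ex:toycase}, where the point is to choose $f\in C(X)$ so that $f+s.f$ is not constant; your remark about ``evaluating against a point-evaluation character'' gestures at this but does not explain why $|X|\ge 3$ is needed (Example~\ref{ex:necessaryassumption} shows it is). In summary: your reduction is sound, but the part you defer — producing a concrete element of $\mathbb{E}(\overline{\mathcal{A}_I})$ that is not scalar — is where all three case-dependent arguments of the paper live, and the sketch you offer for it is incomplete and, in the i.c.c.\ case, inapplicable.
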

Note that $\Gamma_f = \trivgp$ exactly when $\Gamma$ is an i.c.c. group. In this case, we show (see~\thref{i.c.c-intermediate}) that every non-trivial ideal $I< C_r^*(\Gamma)$ corresponds to an intermediate sub-algebra $\mathcal{B}_I$ which is not a crossed product $C^*$-algebra.

Contrary to the spirit of the above theorem, Example~\ref{ex:necessaryassumption} shows that for the action of the (non $C^*$-simple) group $\Gamma=\mathbb{Z}/2\mathbb{Z}$ on $X = \{p_1,p_2\}$ there is no intermediate subalgebra $\cB$ of the form $C_r^*(\mathbb{Z}/2\mathbb{Z})\subset\cB\subset C(X)\rtimes_r\mathbb{Z}/2\mathbb{Z}$ at all. This example is responsible for the assumption in the second bulleted alternative in theorem ~\ref{notcrossedprod}.

Using similar methods, we also show that, for a $\Gamma$-$C^*$-algebra $\cA$, every non-trivial two-sided closed $\Gamma$-invariant ideal $I\le\cA$ corresponds to an intermediate object $\cA\subset\cB\subset\cA\rtimes_r\Gamma$ such that $\cB$ is not a crossed product of the form $\cA\rtimes_r\Lambda$ for any normal subgroup $\Lambda\triangleleft\Gamma$ (see~\thref{notcomingfromasubgroup}).

\vspace{.3cm}
We attempt to generalize our classification in the subsequent two works in progress.
First, to the general setting of $C(X) \rtimes_r\Gamma$ with $X$ is a compact Abelian group and $\Gamma$ a countable dense subgroup of $X$. In the second, we treat
analogous questions regarding intermediate von Neumann algebras
$\mathcal{N}$ of the form $L(\Gamma)\subset\mathcal{N}\subset \mathcal{M}\rtimes\Gamma$,
 where $\mathcal{M}$ is a von Neumann algebra. 
In particular, we study the case where $\mathcal{M}$ is the
commutative $L^\infty(X,\mu)$ equipped with the automorphism corresponding to the irrational rotation $R_\alpha$. 

\vspace{.3cm}

When we showed a draft of our work to Ilan Hirshberg, he pointed out that our classification results 
are closely related and, in fact, partly overlap several existing works in the literature.
More specifically, the works of Ruy Exel (see \cite{exel1994circle}), where the author shows the existence and investigates some of the properties of our ideal functions in a much more general setup.
Then, his extensive book on ``Fell Bundles'' \cite{exel2017partial},
and in particular, the results in Chapter 23 of this book.
The second source of examples arises as a result of
the dual nature of $\mathcal{C}$.
Our construction of ideal functions turns out to be
a generalization (within $\mathcal{C}$)
of the well-known Putnam's 
``$Y$-orbit breaking'' subalgebras, as described, for example, in 
Phillips' contribution to the book \cite{Phill} (see Subsection \ref{Putnam}).
Nonetheless, our approach and methods seem new, and so does our complete classification of intermediate subalgebras.\\

\noindent
\textbf{Acknowledgements:} We thank Ilan Hirshberg, Hanfeng Li, Mehrdad Kalantar, Sven Raum, and Yongle Jiang for their helpful remarks, discussions, and corrections. 
Finally, we thank the anonymous reviewer for carefully reading our manuscript and for many 
insightful comments and suggestions. In particular, we thank the referee for pointing out two serious mistakes:
(i) in  the 
formula for the closed ideal function of the join of two intermediate algebras, and (ii)
in our original proof of Proposition \ref{prop:Q} which is now replaced by a proof kindly suggested by the referee.
\vspace{.3cm}

\section{Some notations and preliminaries}\label{Sec:prelim}
\subsection{The crossed product \texorpdfstring{$C(X) \rtimes_r \mathbb{Z}$}{}} \label{sec:cp}

Let $(X,\mu,T)$ be a metric minimal free uniquely ergodic cascade (i.e. a $\mathbb{Z}$-action, $n \mapsto T^n$, where $T$ is a homeomorphism of $X$). Let $\mathcal{C} = C(X) \rtimes_r \mathbb{Z}$ be the corresponding
reduced crossed product $C^*$-algebra and, as usual, we denote by $C_r^*(\mathbb{Z})$
the $C^*$-algebra of $\mathbb{Z}$.
Via the Fourier transform $\mathcal{F} : \ell^2(\mathbb{Z}) \to L^2(S^1,\frac{dt}{2\pi})$, 
$(a_n) \mapsto \sum_{n \in \mathbb{Z}} a_n e^{int}$ (with $S^1 = \mathbb{R}/2\pi\mathbb{Z}$),
we can view $C_r^*(\mathbb{Z})$ as the commutative
algebra $C(S^1)$, where for $\phi \in C(S^1)$ the corresponding 
operator in $C^*(\mathbb{Z})$ is given by multiplication 
$v \mapsto \phi v,  \ v \in L^2(S^1)$.
Under this correspondence, we identify $\lambda_1$ 
(which corresponds to the shift operator on $\ell^2(\mathbb{Z})$) 
with the function $e^{it} \in C(S^1)$, and 
$\lambda_n$ with $e^{nit}$,  for every $n \in \Z$. We will denote the opposite isomorphism
$C(S^1) \rightarrow \CZ$, by $\phi \mapsto \hat{\phi}$. For trigonometric polynomials (and other nice functions), this is explicitly given by regular Fourier series $\hat{\phi}:=\sum_{n \in \Z} \hat{\phi}(n) \lambda_n$ with $\hat{\phi}(n) = \frac{1}{2\pi} \int_{S^1} \phi(t) e^{-int}dt$, and then extended by continuity to the whole of $C(S^1)$. 

We then identify $\mathcal{C}$ with the $C^*$-subalgebra of $\mathbb{B}(L^2(X,\mu))$
generated by the (unitary) Koopman operator $U_T$, which 
we also denote as $\lambda_1$, and the commutative $C^*$-algebra
of multiplication operators $M_f : v \mapsto fv, \ v \in L^2(X,\mu), \ f \in C(X)$.
We then let $\lambda_n = U_T^n$, so in particular $\lambda_0 = I$
and $\lambda_{-1} = U_T^{-1} = \lambda_1^*$.
A general element $a$ of the group ring over $C(X)$ (which is dense in $\mathcal{C}$)
has the form
$$
a = \sum_{n=-N}^N M_{f_n} U_T^n
$$
which, when no confusion can arise, we also write as
$$
a = \sum_{n=-N}^N f_n \lambda_n.
$$
With these notations the group ring that generates $C_r^*(\mathbb{Z})$ 
consists of elements of the form
$$
a = \sum_{n=-N}^N a_n \lambda_n,
$$
with $a_n \in \C$.

Recall that the $T$-action on $C(X)$ is implemented by conjugation
by $\lambda_1$:
$$
\lambda_1 M_f \lambda_1^* = M_{f \circ T},
$$
or simply
$$
\lambda_1 f \lambda_1^* = f \circ T.
$$

The reduced crossed product $C(X)\rtimes_r\mathbb{Z}$ comes equipped with a $\mathbb{Z}$-equivariant canonical conditional expectation $\mathbb{E}:C(X)\rtimes_r\mathbb{Z}\to C(X)$ defined on the algebraic crossed product by 
$$
\mathbb{E}(a) = \mathbb{E}\left(\sum_{n=-N}^N f_n \lambda_n\right)= f_0.
$$
It follows from \cite[Proposition~4.1.9]{BO:book} that $\mathbb{E}$ extends to a faithful conditional expectation from $C(X)\rtimes_r\Gamma$ onto $C(X)$.

For a general element $a \in \mathcal{C}=
C(X)\rtimes_r\mathbb{Z}$, its {\it Fourier coefficient
of order $n$} is the function
$$
\hat{a}(n) = \mathbb{E}(a\lambda^*_n) \in C(X).
$$

\subsection{Two circles} \label{sec:circles}

From here until the end of Section
\ref{sec:structure}, unless we say otherwise,
we take $(X,\mu,T)$ to be the circle $X$,
equipped with normalized Lebesgue measure $\mu$,
and with $T = R_\alpha$ an irrational rotation.
Thus the set of characters $\{e^{inx} | \ n \in \mathbb{Z}\}$ forms a basis for $L^2(X,\mu)$
and their linear combinations form a dense
$*$-subalgebra of $C(X)$.

When $(X, T)$ is an irrational rotation on the circle, it is important not to confuse the two circles. We will always refer to the Gelfand dual of $\CZ$ as $S^1$ and denote functions $\phi(t),\psi(t) \in C(S^1)$ by the lowercase Greek letters, with a variable $t$. The phase space of the dynamical system will always be denoted by $X$, and functions on $X$ will be denoted by Roman letters, $f(x),g(x) \in C(X)$ with a variable $x$. The action of $\Z$ on $X$ is by the rotation $T:X \rightarrow X$, $Tx=x+\alpha \pmod{2\pi}$. We will also consider the rotation, by the same angle $\alpha$, on the other circle and denote it by $\tau: S^1 \rightarrow S^1$, $\tau t = t + \alpha \pmod{2\pi}$. 

The rotation $\tau$ yields an action on $C(S^1)$ by $\phi \mapsto \phi \circ \tau$. Via the standard identification of $\CZ \cong C(S^1)$ we obtain an action $\tau: \CZ \rightarrow \CZ$, which is explicitly given in terms of Fourier coefficients by $\widehat{\tau \phi}(n) = \hat{\phi}(n) e^{in\alpha}$.
We denote the normalized Lebesgue measure
on $S^1$ by $\nu$. It is the unique $\tau$-invariant
Borel probability measure on $S^1$.
Note that $\mathbb{E} : \mathcal{C} \to C(X)$, our canonical conditional expectation, is given by
$$
\mathbb{E}\left(\sum_{-N}^N f_n \lambda_n\right) =
\mathbb{E}\left(\sum_{-N}^N f_n \widehat{e^{int}}\right) =
\sum_{-N}^N f_n \int_{S^1}e^{int}\; d\nu(t) = f_0.
$$
The $\tau$-action naturally extends to an action on the space of closed ideals $\Il$ in $\CZ$, identified with the closed subsets of the circle $\Cl(S^1)$. By abuse of notation, we will denote all of these by the same letter $\tau$. 
The following discussion yields the involutive automorphism alluded to in Remark \ref{rem:symmetry}
\begin{remark}
\label{cor:symmetry}

There exists an involutive automorphism $\iota:\cC \rightarrow \cC$ with $\iota C(X) = \CZ$. 
%\yair{Go over this!}
As is shown e.g. in \cite{Davidson} the $C^*$-algebra 
$\mathcal{C} = \mathcal{C}_\alpha, \ \alpha \in [0,1)$ irrational, 
is isomorphic to the universal $C^*$-algebra $C^*(U,V)$, with $U, V$ two unitaries satisfying the relation
\begin{equation} \label{eqn:universal}
UV = e^{2\pi i\alpha} VU. 
\end{equation}
It is also shown that $K_0(\mathcal{C}_\alpha) =
\mathbb{Z} + \mathbb{Z}\alpha$
and it follows that two such algebras
$\mathcal{C}_\alpha$  and $\mathcal{C}_\beta$
are isomorphic if and only if $\beta = \pm \alpha$. In our setup we have $U = M_{e^{ix}} \in C(X)$ and $V = \lambda_1 \in C^*(\mathbb{Z})\cong C(S^1)$.
If we take $U= \lambda_1 =  M_{e^{it}} \in C(S^1)$ and 
$V = M_{e^{ix}} \in C(X)$ we obtain exactly Equation (\ref{eqn:universal})
by Lemma \ref{lem:alg_opp} below
(with $\tau^{-1}$ replacing $T$).
\end{remark} 
\subsection{Generalized Fourier coefficients}\label{generalizedF} 
In addition to the standard Fourier coefficients of functions $\phi \in C(S^1)$ defined above, we use $\CZ$-valued Fourier expansion on $X$. 

Using $\mu$, the Lebesgue probability measure on $X$, we define a $\mathbb{Z}$-equivariant conditional expectation $\CE:\cC \to \CZ$, by setting $\CE(f\lambda_n) = \mu(f)\lambda_n$ and extending to the whole of $\cC$ using linearity and continuity (see \cite[Exercise~4.1.4]{BO:book}). Be careful not to confuse this with the standard conditional expectation $\E: \cC \to C(X)$.

\begin{defn} \label{def:GFC}
Let $a \in \cC$ we define its $\CZ$ valued Fourier coefficients by the following formula 
$$\overbracket{a}(n) = \CE \left( e^{-inx} a \right).$$
\end{defn}
For finite sums of the form 
\begin{equation} \label{eqn:Fourier}
a = \sum_{k=-N}^{N}e^{inx}\hat{\phi}_n, \quad \phi_n \in C(S^1).
\end{equation}
we just have $\overbracket{a}(n) = \hat{\phi}_n$. We will refer to such finite sums as {\it{generalized trigonometric polynomials}}. For a function $f \in C(X)$,  $$\overbracket{f}(n) = \int_{X} e^{-inx}f(x)\frac{dx}{2\pi},$$ is the standard $n^{th}$ Fourier coefficient. Note that our notation clearly distinguishes this from Fourier coefficients $\hat{\phi}(n)$ for a function $\phi \in C(S^1)$. 

Given $a \in \cC$ we will denote by 
$$
Q_n(a) = \sum_{k=-n}^{n} e^{ikx} \overbracket{a}(k)
=  \sum_{k=-n}^{n} e^{ikx} \hat{\phi}_k.
$$
Such {\it{generalized Fourier polynomial approximations}} play a central role in our arguments. Note that these differ from the more standard way of approximating elements of $\cC$ by expressions of $\sum f_k \lambda_k, \ f_k \in C(X)$.

Heuristically, one can think of general elements of $\cC$ as infinite Fourier expansions of the form $\sum_n e^{inx} \overbracket{a}(n)$. Of course, such infinite series often fail to converge in the norm. We will overcome this by approximating elements of the algebra using the following (also see \cite[Proposition~3.3]{svensson2009commutant}):

\begin{theorem}\label{thm:Fejer}(The Fej\'{e}r approximation theorem) 
The following two sequences, of generalized Ces\`{a}ro means of an element $a \in \mathcal{C}$
\begin{eqnarray*}
\hat \sigma_n(a) & = & 
\sum_{j= -n}^n \left(1 - \frac{\abs{j}}{n+1}\right) \hat a(j) \lambda_j \\
{\overbracket{\sigma}}_n(a) & = &
\sum_{j= -n}^n \left(1 - \frac{\abs{j}}{n+1}\right) e^{ijx} 
\overbracket{a}(j).
\end{eqnarray*}
Both converge to $a$ in the norm. 
\end{theorem}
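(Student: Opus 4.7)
The plan is to realize each sequence as the evaluation at the identity element of the Fej\'er--Ces\`aro convolution attached to a strongly continuous circle action on $\cC$, and then to invoke the classical Fej\'er theorem for continuous Banach-space-valued functions on $S^1$. Since $\cC$ carries two natural circle actions, each dual to one of its ``crossed-product'' presentations (cf.\ Remark~\ref{cor:symmetry}), the two convergence statements are strictly parallel.

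For the first sequence I would use the gauge action $\alpha : S^1 \curvearrowright \cC$ specified on the generating set by $\alpha_z(f) = f$ for $f \in C(X)$ and $\alpha_z(\lambda_k) = z^k \lambda_k$. Each $\alpha_z$ is an isometric $*$-automorphism, so norm continuity of $z \mapsto \alpha_z(a)$ on the dense subalgebra of finite sums $\sum f_k \lambda_k$ extends to all of $\cC$ by a standard density (three-$\varepsilon$) argument. Thus $F : z \mapsto \alpha_z(a)$ is a norm-continuous $\cC$-valued function on $S^1$ whose $j$-th Fourier coefficient is
\[
\int_{S^1} z^{-j}\,\alpha_z(a)\,dz \;=\; \hat a(j)\lambda_j,
\]
as one verifies first on polynomials and then extends by continuity of the Bochner integral. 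The vector-valued Fej\'er theorem asserts that the convolution of $F$ with the Fej\'er kernel $K_n(z) = \sum_{j=-n}^{n} \bigl(1 - |j|/(n+1)\bigr) z^j$ converges to $F$ uniformly in $z$; evaluating at $z = 1$ gives $\hat\sigma_n(a) \to F(1) = a$ in norm.

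For the second sequence I would proceed identically, but using the dual circle action $\beta : S^1 \curvearrowright \cC$ defined by $\beta_w(\lambda_k) = \lambda_k$ and $\beta_w(e^{ikx}) = w^k e^{ikx}$. To see that $\beta$ extends to a $*$-automorphism of $\cC$ it suffices to check that the single commutation relation $\lambda_1 e^{ix}\lambda_1^{*} = e^{i\alpha} e^{ix}$ is preserved, which is immediate because the scalar $w$ commutes with $\lambda_1$. The same density argument yields strong continuity on the dense subalgebra of generalized trigonometric polynomials \eqref{eqn:Fourier}, and a direct calculation on such polynomials identifies the $j$-th Fourier coefficient of $w \mapsto \beta_w(a)$ with $e^{ijx}\overbracket{a}(j)$. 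Evaluating the Fej\'er convolution at $w = 1$ then yields $\overbracket{\sigma}_n(a) \to a$.

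The only step requiring genuine care is the construction of $\beta$: unlike the gauge action $\alpha$, it is not the canonical dual action attached to the presentation $\cC = C(X)\rtimes_r \Z$, but rather the gauge action of the \emph{opposite} presentation $\cC \cong \CZ \rtimes_r \Z$, transported through the involution $\iota$ of Remark~\ref{cor:symmetry}. Once $\beta$ has been put in place, both convergences become single applications of the classical Fej\'er theorem in a Banach-space-valued setting, and the remainder of the argument is purely formal.
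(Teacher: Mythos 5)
Your argument is correct. The paper itself states this theorem without proof, referring to Svensson--Silvestrov--de Jeu, and your route via circle actions and the vector-valued Fej\'er theorem is exactly the standard one used there for the first sequence. To summarize what you did: you realize $\hat\sigma_n(a)$ and $\overbracket{\sigma}_n(a)$ as Fej\'er means at $z=1$ of the two norm-continuous $\cC$-valued functions $z \mapsto \alpha_z(a)$ and $w \mapsto \beta_w(a)$, where $\alpha$ is the usual gauge action of the crossed-product presentation ($\alpha_z\upharpoonright_{C(X)} = \id$, $\alpha_z(\lambda_k)=z^k\lambda_k$) and $\beta$ is the gauge action of the opposite presentation ($\beta_w\upharpoonright_{\CZ} = \id$, $\beta_w(e^{ikx})=w^k e^{ikx}$); strong continuity comes from density of finite sums plus the automorphisms being isometric, the Fourier coefficient computations are a direct Bochner-integral calculation on polynomials, and the Banach-space-valued Fej\'er theorem closes the argument.

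The one step that merits an explicit word is the existence of $\beta$: it is not automatic for a general crossed product $C(X)\rtimes_r\Z$, and you correctly ground it in the universal property of the irrational rotation algebra (equivalently, in the involution $\iota$ of Remark \ref{cor:symmetry}). This is precisely what distinguishes the second convergence statement from the first, and your identification that the second sequence is the image of the first under $\iota$ makes the symmetry transparent. Two small points worth tightening in a final write-up: (i) the scalar $\alpha$ denoting the rotation angle collides with your use of $\alpha$ for the gauge action, so rename one of them; and (ii) when you say the Fourier coefficients are verified ``first on polynomials and then extended by continuity,'' it is worth spelling out that the extension uses both continuity of the Bochner integral and density of the generalized trigonometric polynomials of Equation~(\ref{eqn:Fourier}) in $\cC$, the latter being a consequence of Stone--Weierstrass applied in the $C(X)$-leg.
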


\begin{note}\label{Fej-note}
For any $a \in \cC$ and fixed $q \in \N$ we have that  $$\lim_{n \rightarrow \infty} \norm{Q_q({\overbracket{\sigma}}_n(a)) - Q_q(a)} = 0.$$
\end{note}
\subsection{A basic lemma}

\begin{lemma} \label{lem:alg_opp}
Let $m,n \in \Z$ and $\hat{\phi},\hat{\psi} \in \CZ$ then:
\begin{enumerate}
    \item  \label{itm:c1} $\hat{\phi} e^{inx} = e^{inx} \widehat{\phi \circ \tau^{n}}, \quad \forall n\in\Z, \hat{\phi} \in \CZ$,
    \item \label{itm:c2} $\left(e^{inx} \hat{\phi}\right)^* = e^{-inx} \widehat{\overline{\phi} \circ \tau^{-n}}$,
    \item \label{itm:c3} $e^{inx}\hat{\phi} e^{imx}\hat{\psi} = e^{i(n+m)x} 
    \left[\widehat{(\phi \circ \tau^m)\psi}\right]$.
\end{enumerate}
\end{lemma}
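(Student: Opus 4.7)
The plan is to reduce all three identities to a single scalar commutation rule for how $\lambda_k$ moves past the character $e^{inx} \in C(X)$, and then extend by linearity and density. The starting point is the defining covariance relation $\lambda_1 f \lambda_1^* = f \circ T$, applied to $f(x) = e^{inx}$, which gives $\lambda_1 e^{inx} = e^{in\alpha} e^{inx} \lambda_1$. Iterating this (and inverting) yields the clean rule
$$\lambda_k\, e^{inx} = e^{ikn\alpha}\, e^{inx}\, \lambda_k \qquad \forall\, k,n \in \Z.$$

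For \eqref{itm:c1}, first assume $\phi$ is a trigonometric polynomial, so that $\hat{\phi} = \sum_k \hat{\phi}(k)\,\lambda_k$ is a finite sum. Using the commutation rule termwise,
$$\hat{\phi}\, e^{inx} \;=\; \sum_k \hat{\phi}(k)\, e^{ikn\alpha}\, e^{inx}\, \lambda_k \;=\; e^{inx} \sum_k \bigl(\hat{\phi}(k)\, e^{ikn\alpha}\bigr)\lambda_k.$$
A direct computation of Fourier coefficients shows $\widehat{\phi \circ \tau^n}(k) = \hat{\phi}(k)\, e^{ikn\alpha}$, so the right-hand sum is exactly $\widehat{\phi \circ \tau^n}$. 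Since trigonometric polynomials are dense in $C(S^1)$, the maps $\phi \mapsto \hat{\phi}$, $\phi \mapsto \phi \circ \tau^n$, and left/right multiplication by $e^{inx}$ are all norm-continuous, so the identity extends to arbitrary $\phi \in C(S^1)$.

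Parts \eqref{itm:c2} and \eqref{itm:c3} now follow formally from \eqref{itm:c1}. For \eqref{itm:c2}, take adjoints to get $(e^{inx}\hat{\phi})^* = \hat{\phi}^*\, e^{-inx}$, use the $*$-isomorphism $C(S^1) \cong \CZ$ to rewrite $\hat{\phi}^* = \widehat{\overline{\phi}}$, and then apply \eqref{itm:c1} with $n$ replaced by $-n$ and $\phi$ by $\overline{\phi}$. For \eqref{itm:c3}, use \eqref{itm:c1} to move $\hat{\phi}$ past $e^{imx}$, obtaining
$$e^{inx}\hat{\phi}\, e^{imx}\hat{\psi} \;=\; e^{inx}\, e^{imx}\, \widehat{\phi \circ \tau^m}\, \hat{\psi} \;=\; e^{i(n+m)x}\,\widehat{(\phi \circ \tau^m)\psi},$$
where the last step uses that $\CZ \cong C(S^1)$ is multiplicative and commutative.

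The calculation is short; the only genuine care required is bookkeeping — distinguishing the two circles (the phase space $X$ and the dual circle $S^1$) and keeping straight which objects live in $C(X)$ versus $\CZ$. Once the basic commutation $\lambda_k e^{inx} = e^{ikn\alpha} e^{inx} \lambda_k$ is in hand, the rest is essentially bookkeeping together with one density argument.
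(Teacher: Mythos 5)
Your proof is correct and follows essentially the same route as the paper: both establish the commutation rule $\lambda_k e^{inx} = e^{ikn\alpha} e^{inx}\lambda_k$, verify item \eqref{itm:c1} for trigonometric polynomials and extend by density, and then deduce \eqref{itm:c2} and \eqref{itm:c3} formally from \eqref{itm:c1}. You are somewhat more explicit than the paper in spelling out the deduction of the last two items, but there is no substantive difference.
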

\begin{proof}
The first property directly implies the other two. Also, it is enough to prove the first property when $\phi$ is a finite sum of the form $\hat{\phi} = \sum_{k=-K}^{K} a_k \lambda_k$ because such functions are uniformly dense in $C(S^1)$. In this case, we obtain a direct computation:
\begin{eqnarray*}
\sum_{k} a_k \lambda_k e^{inx} & = & \sum_k a_k \lambda_k e^{inx} \lambda_{-k} \lambda_k 
= \sum_{k} a_k e^{in(x+k\alpha)} \lambda_k  \\
& = & e^{inx} \sum_{k} a_k e^{ikn\alpha} \lambda_k = 
e^{inx} \widehat{\phi \circ \tau^{n}}
\end{eqnarray*}
Where the last equality is a standard property of Fourier coefficients.
\end{proof}

\subsection{Ideal functions} \label{def:ideal}

Given an intermediate algebra $\cA \in \fA$, with any function $f \in C(X)$ we associate a closed ideal of $\CZ$ 
\begin{eqnarray*}
I_{\cA}(f) & = & \{\eta \in \CZ \ | \ f \eta \in \cA \} \in \Il.
\end{eqnarray*}
Recall that $\cI$ was defined in the introduction as the set of all closed ideals in $\CZ$. 
\begin{defn} \label{def:Phi_inv}
Given an intermediate algebra $\CZ < \cA < \cC$ we define {\it{the ideal function}} $\Psi(\cA) = \fc_{\cA}: \Z \rightarrow \Il$ {\it{associated with $\cA$}} by
\begin{equation*}
\fc_{\cA}(n) = I_{\cA}(e^{inx}).
\end{equation*}
Note that $\fc_{\cA}(0) = \CZ$ since by assumption $\CZ < \cA$, so that $\fc_{\cA} \in \fD$. 
The {\it support} of $\fc_{\cA}$ is the set 
$$
\Supp(\fc_{\cA}) = \{n \in \mathbb{Z} \ | \ \fc_{\cA}(n) \ne (0)\}
= \{n \in \mathbb{Z} \ |\  \set{\fc}(n) \not = S^1\}.
$$ 
and we say that $\fc_{\cA}$ is trivial if $\Supp(\fc_{\cA}) = \{0\}$.
\end{defn}

It is unclear that $\fc_{\cA}$ should be nontrivial for a given $\CZ \ne \cA \in \fA$. Still, this turns out to be true, and our main Theorem \ref{thm:main} completely classifies intermediate algebras in terms of their ideal functions. With this in mind, we study the basic properties of an ideal function that comes from an algebra. 
The basic Lemma \ref{lem:alg_opp} is our main tool.
\begin{prop} \label{cor:O_props}
Let $\cA \in \fA$ and $\fc = \Psi(\cA)$. The following properties are satisfied: 
\begin{itemize}
    \item [$\cone$] $\set{\fc}(-n) = \tau^{n}\set{\fc}(n), \ \forall n \in \Z$,
    \item [$\ctwo$] $\set{\fc}(m+n) \subset \tau^{-m} \set{\fc}(n) \cup \set{\fc}(m)$ for every $m,n \in \Supp(\fc)$.
\end{itemize}
\end{prop}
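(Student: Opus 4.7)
The plan is to work through the Gelfand correspondence $\CZ \cong C(S^1)$, under which the closed ideal $\fc(n) = \{\hat\phi \in \CZ : e^{inx}\hat\phi \in \cA\}$ corresponds to the closed set $\set{\fc}(n) \subset S^1$ of common zeros. Note $\fc(n)$ really is a closed two-sided ideal: closedness is clear, and two-sidedness follows because $\CZ$ is commutative and $\CZ \subset \cA$ (so multiplying $e^{inx}\hat\phi \in \cA$ by any $\hat\chi \in \CZ$ keeps us in $\cA$). Both $\cone$ and $\ctwo$ will then come from applying the multiplication rules in Lemma \ref{lem:alg_opp} to elements of the form $e^{inx}\hat\phi \in \cA$ and translating the resulting algebraic inclusions into set-theoretic ones via the standard dictionary (inclusions reverse; unions of closed sets correspond to intersections of ideals).

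For $\cone$, I will take the adjoint. Since $\cA = \cA^{*}$, Lemma \ref{lem:alg_opp}(itm:c2) shows that $\hat\phi \in \fc(n)$ implies $\widehat{\bar\phi \circ \tau^{-n}} \in \fc(-n)$. Every closed ideal of $C(S^1)$ is stable under complex conjugation, so equivalently $\widehat{\phi \circ \tau^{-n}} \in \fc(-n)$ whenever $\hat\phi \in \fc(n)$. In zero-set language this reads $\set{\fc}(-n) \subset \tau^{n}\set{\fc}(n)$, and the reverse inclusion follows by applying the same argument with $-n$ in place of $n$.

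For $\ctwo$, I will multiply two elements. By Lemma \ref{lem:alg_opp}(itm:c3), if $\hat\phi \in \fc(m)$ and $\hat\psi \in \fc(n)$ then
$$
(e^{imx}\hat\phi)(e^{inx}\hat\psi) \;=\; e^{i(m+n)x}\,\widehat{(\phi \circ \tau^{n})\psi} \;\in\; \cA,
$$
so $\widehat{(\phi \circ \tau^{n})\psi} \in \fc(m+n)$. Let $J \subset C(S^1)$ denote the closed ideal generated by all such products; then $J \subset \fc(m+n)$, which by the Gelfand dictionary gives $\set{\fc}(m+n) \subset \set{J}$. A short computation shows $\set{J} = \tau^{-n}\set{\fc}(m) \cup \set{\fc}(n)$, so $\set{\fc}(m+n) \subset \tau^{-n}\set{\fc}(m) \cup \set{\fc}(n)$. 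Since $m+n = n+m$, swapping the roles of $m$ and $n$ yields the stated form $\set{\fc}(m+n) \subset \tau^{-m}\set{\fc}(n) \cup \set{\fc}(m)$.

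The only step that is not pure bookkeeping is the identification $\set{J} = \tau^{-n}\set{\fc}(m) \cup \set{\fc}(n)$. The inclusion $\supset$ is immediate: each generator $(\phi\circ\tau^{n})\psi$ vanishes at every point of this union. For $\subset$, given $s \notin \set{\fc}(n)$, some $\psi \in \fc(n)$ satisfies $\psi(s) \ne 0$; if also $s \in \set{J}$, then $(\phi\circ\tau^{n})(s)\psi(s)=0$ forces $\phi(\tau^{n}s) = 0$ for every $\phi \in \fc(m)$, i.e.\ $\tau^{n}s \in \set{\fc}(m)$, hence $s \in \tau^{-n}\set{\fc}(m)$. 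This point-separation step (a direct consequence of the Gelfand--Stone correspondence for $C(S^1)$) is the technical heart of the argument; everything else is dictionary.
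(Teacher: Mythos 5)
Your proof is correct and follows essentially the same route as the paper: both arguments derive $\cone$ from Lemma \ref{lem:alg_opp}(\ref{itm:c2}) applied to $\pm n$ and derive $\ctwo$ from Lemma \ref{lem:alg_opp}(\ref{itm:c3}). The only cosmetic difference is that the paper shortcuts the zero-set identification by choosing single generating functions $\hat\phi,\hat\psi$ with $\cZ(\phi)=\set{\fc}(n)$, $\cZ(\psi)=\set{\fc}(m)$ (which exist by Urysohn), whereas you work with the closed ideal generated by all products and verify $\set{J}=\tau^{-n}\set{\fc}(m)\cup\set{\fc}(n)$ directly; the mathematical content is the same.
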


\begin{proof} 
Let $\phi,\psi \in C(S^1)$ be functions with $\cZ(\phi) := \{t \in S^1 \ | \ \phi(t)=0\} = \set{\fc}(n)$ and $\cZ(\psi) = \set{\fc}(m)$. 
Namely $\hat{\phi},\hat{\psi}$ are generators of $\fc(n),\fc(m)$ respectively. Now $ \cZ((\phi \circ \tau^m)\psi) = \tau^{-m} \set{\fc}(n) \cup \set{\fc}(m)$ and $\ctwo$ follows directly from equation (\ref{itm:c3}) of Lemma \ref{lem:alg_opp}. Similarly applying Equation (\ref{itm:c2}) of the same lemma, to both $\pm n$, yields property $\cone$.
\end{proof}

\begin{defn}
\thlabel{closedidealfunction}
An ideal function $\fc \in \fD$ is called {\it{closed}} if it satisfies properties $\cone,\ctwo$. We denote the collection of closed ideal functions by $\fC \subset \fD$. 
As noted in the introduction, $\fC \subset \fD$ is not a sublattice, as it does not respect the join operation. We will elaborate on this later.
\end{defn}

\begin{note}
Condition $\ctwo$ can be rephrased more symmetrically:
$$\set{\fc}(m+n) \subset \left(\tau^{-m} \set{\fc}(n) \cup \set{\fc}(m)\right) \cap \left(\tau^{-n} \set{\fc}(m) \cup \set{\fc}(n)\right), \ \forall m,n \in \Supp(\fc)$$
\end{note}

\begin{note} \label{note:pos_gen}
Condition $\cone$ implies that every closed ideal function is completely determined by its values on $\N$. Thus, every function $\fc: \N \rightarrow \Il$ subject to the following three conditions admits a unique extension to a closed ideal function defined on all of $\Z$.
\begin{itemize}
\item [$\ctwoa$] $\set{\fc}(m+n) \subset \tau^{-m} \set{\fc}(n) \cup \set{\fc}(m), \quad \forall m,n \in \N \cap \Supp(\fc)$
\item [$\ctwob$] $\set{\fc}(m-n) \subset \tau^{n-m} \set{\fc}(n) \cup \set{\fc}(m), \quad \forall 1 \le n < m, \quad m,n \in \Supp(\fc)$
\item [$\ctwoc$] $\set{\fc}(-n+m) \subset \tau^{n}(\set{\fc}(n) \cup \set{\fc}(m)), \quad \forall 1 \le n < m, \quad m,n \in \Supp(\fc)$
\end{itemize}
\begin{proof}
We extend $\set{\fc}$ to $\Z$ by setting $\set{\fc}(0)=\emptyset, \set{\fc}(-n) = \tau^n \set{\fc}(n), \forall n \in \N$. With these definitions, condition $\ctwo$ holds when $m=n$. Hence, without loss of generality, we assume that $m > n$. This leaves us with the task of verifying $\ctwo$ in six different cases, three of which are the given equations. Let us verify that, assuming $\ctwoa$, $\ctwob$, $\ctwoc$ the other three hold as well: 
\begin{eqnarray*}
\set{\fc}(n-m) & = & \tau^{m-n}\set{\fc}(m-n) \subset \tau^{m-n} \left(\tau^{-m} \tau^n \set{\fc}(n) \cup \set{\fc}(m) \right) = \set{\fc}(n) \cup \tau^{-n} \set{\fc}(-m) \\
\set{\fc}(-m+n) & = & \tau^{m-n}\set{\fc}(-n+m) \subset \tau^{m-n} \tau^{n} \left(\set{\fc}(n) \cup \set{\fc}(m) \right) = \tau^{m} \left(\set{\fc}(n) \cup \set{\fc}(m) \right)  \\
\set{\fc}(-n-m) & = & \tau^{m+n} \set{\fc}(m+n) \subset \tau^{m+n} \left(\tau^{-m} \set{\fc}(n) \cup \set{\fc}(m) \right) = \set{\fc}(-n) \cup \tau^{n}\set{\fc}(-m)
\end{eqnarray*}
Which is exactly what we had to show. 
\end{proof}
\end{note}
\begin{example} \label{eg:prim}
Given $q \in \N$ and a closed subset $P \in \Cl(S^1)$. We define a closed ideal function $\fb_{q,P} \in \fC$ by
$$
\set{\fb}_{q,P}(m) = \left\{ 
   \begin{array}{ll} 
   \tau^{q(1-n)}P \cup \ldots \cup \tau^{-q}P \cup P & m = nq, \ n > 0 \\
   \emptyset & m = 0 \\
   \tau^{q} P \cup \tau^{2q} P \cup \ldots \cup \tau^{nq}P & m=-nq, \ n > 0   \\
   S^1 & q \not{\mid} m  
   \end{array} \right.
$$  
As a matter of convention, we also define
$\fb_{0,P}(m) = C^{*}_r(\Z)$ for $m=0$ and $(0)$ otherwise. We refer to this as the {\it basic ideal function with the data $(q, P)$.} It is easiest to verify this is an abstract ideal function using the criterion given in Note \ref{note:pos_gen}. In fact,  once you verify the case of $q=1$, the general case follows by considering the subalgebra $\cA < \cC^q < \cC$.  If $J \lhd \CZ$ is the closed ideal associated with the closed subset $P \in \Cl(S^1)$, we also denote $\fb_{q, J} = \fb_{q, P}$. 
\end{example}
Directly from the definition of a closed ideal function, it follows that $\fb_{q, J}$ is minimal among all closed ideal functions with $J < \fc(q)$. Alternatively, this can be verified using Note \ref{note:pos_gen}.

In particular for every $\fc \in \fC$ and every $n \in \N$ we have $\fb_{n,\fc(n)} \preceq \fc$. Taking the join over successive values of $n$ yields better and better approximations for $\fc$ from below:
Thus, we define: 
\begin{defn}
For $\fc \in \fC$ we define the {\it{$n^{\mathrm{th}}$ approximation of $\fc$ from below}} by 
$$
\fc^n = \bigvee_{\substack{m \in \Supp{\fc} \\ 0 \le m \le n}} \fb_{m,\fc(m)} = \bigvee_{\substack{m \in \Crit{\fc} \\ 0 \le m \le n}} \fb_{m,\fc(m)},$$
where $\text{Crit}(c)$ is defined below shortly.
When $\fc(m+1) = \fc^{m}(m+1)$, then $\fb_{m,\fc(m)}$ can be omitted from the above join without changing anything. We thus define {\it{the critical points for $\fc$}} by 
$$\Crit({\fc}) = \{m \in \Supp(\fc) \ | m\ge 0,\  \fc^{m}(m+1) \lneqq \fc(m+1)\}.$$
This explains the notation in the rightmost expression of the above definition.
\end{defn}
\begin{theorem} \label{thm:canonical_decomposition}
(The canonical decomposition theorem) \ 
For every $\fc \in \fC$ we have
$$\fc = \bigvee_{n \in \Crit(\fc)} \fb_{n,\fc(n)} = \bigvee_{n \in \Crit(\fc)} \fc^{n}.$$
We refer to this join as {\it{the Canonical generation of $\fc$}}.
\end{theorem}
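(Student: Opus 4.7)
The argument splits naturally into two claims: (a) that $\fc$ equals the closed join of the basic ideal functions $\fb_{n, \fc(n)}$ as $n$ ranges over $\Supp(\fc) \cap \N$, and (b) that the indices $n \in \Supp(\fc) \setminus \Crit(\fc)$ are redundant in this join. Taking these together, and remarking that $\bigvee_{n \in \Crit(\fc)} \fc^n = \bigvee_{n \in \Crit(\fc)} \fb_{n, \fc(n)}$ is automatic once we observe that each $\fc^n$ is itself a join of $\fb$'s indexed by a subset of $\Crit(\fc)$, gives the theorem.

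For claim (a) the key fact is that $\fb_{n, \fc(n)}$ is the \emph{minimal} closed ideal function whose value at position $n$ is $\fc(n)$. This is built into its construction in Example~\ref{eg:prim} through the prescription in Note~\ref{note:pos_gen}: every value $\set{\fb}_{q,P}(k)$ is the smallest set compatible with the closure conditions $\cone$ and $\ctwo$ given the single datum at $q$. Since $\fc$ is itself closed with value $\fc(n)$ at position $n$, minimality gives $\fb_{n, \fc(n)} \preceq \fc$, so the closed join satisfies $\bigvee_n \fb_{n,\fc(n)} \preceq \fc$. For the reverse inequality, evaluate at any $k \in \Supp(\fc) \cap \N$: the join at position $k$ dominates $\fb_{k, \fc(k)}(k) = \fc(k)$ (as the closed join dominates the ordinary sum of ideals), which combined with the upper bound forces equality at $k$. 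The $\cone$-symmetry enjoyed by both sides then handles $k < 0$, and off $\Supp(\fc)$ both sides vanish by the upper bound.

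Claim (b) is the heart of the theorem. I would proceed by induction on $n$, showing that the partial join $\fc^n$ agrees with its restricted variant $\tilde{\fc}^n := \bigvee_{m \in \Crit(\fc),\, 0 \le m \le n} \fb_{m, \fc(m)}$. The only non-trivial inductive step is when $m \in \Supp(\fc) \setminus \Crit(\fc)$, where one must verify that $\fb_{m, \fc(m)}$ is absorbed by the earlier join, i.e.\ $\fb_{m, \fc(m)} \preceq \fc^{m-1}$. By the minimality characterization just recalled, this is equivalent to the single pointwise equality $\fc^{m-1}(m) = \fc(m)$.

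The main obstacle is extracting this equality from the non-critical condition, which a priori only records information at position $m+1$: $\fc^{m}(m+1) = \fc(m+1)$. My plan is to combine $\ctwo$ applied to the decomposition $(m, 1)$ (and, dually, $(m+1, -1)$) with the explicit formula for $\set{\fb}_{q,P}$ on multiples of $q$, which together tightly propagate equalities between neighboring positions within a closed ideal function and should force $\fc^{m-1}(m) = \fc(m)$ from equality at position $m+1$. Once this is established, passing to the limit $n \to \infty$ gives $\fc = \bigvee_{n \in \Crit(\fc)} \fb_{n, \fc(n)}$ and finishes the proof.
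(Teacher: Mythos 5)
Your proof attempt splits the statement into two claims (a) and (b), and your treatment of claim (a) is correct and matches the paper's argument exactly: minimality of each $\fb_{n,\fc(n)}$ gives the upper bound $\bigvee_n \fb_{n,\fc(n)} \preceq \fc$, and the pointwise evaluation $\fb_{k,\fc(k)}(k) = \fc(k)$ gives the lower bound. The paper's own proof is essentially this, stated very tersely, with the restriction from $\Supp(\fc)$ to $\Crit(\fc)$ absorbed into the definitional claim that $\fc^n = \bigvee_{m \in \Supp(\fc),\, 0\le m \le n}\fb_{m,\fc(m)} = \bigvee_{m\in\Crit(\fc),\, 0 \le m\le n}\fb_{m,\fc(m)}$.

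Claim (b) is where you go astray, and the planned $\ctwo$-propagation step is a genuine gap. You correctly identify that what you need is the pointwise equality $\fc^{m-1}(m) = \fc(m)$ for non-critical $m$, and you correctly observe that the paper's definition of $\Crit(\fc)$ appears to record information at position $m+1$ rather than $m$. But the proposed bridge between these two — propagating an equality at position $m+1$ back to an equality at position $m$ using $\ctwo$ and the explicit formula for $\set{\fb}_{q,P}$ — does not work and cannot work. Condition $\ctwo$ only gives inclusions of the form $\set{\fc}(m+1) \subset \tau^{-m}\set{\fc}(1) \cup \set{\fc}(m)$; it constrains later positions by earlier ones but provides no mechanism to deduce that $\fc^{m-1}$ already captures $\fc(m)$ from the fact that $\fc^m$ captures $\fc(m+1)$. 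The latter could hold simply because $\fb_{m,\fc(m)}$ (which is a summand of $\fc^m$ but not of $\fc^{m-1}$) supplies all the information needed at $m+1$, while $\fc^{m-1}$ remains strictly short at $m$. In other words, the statement you are trying to prove by propagation is false in general, and any argument purporting to establish it must fail.

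The resolution is that the mismatch you noticed between $m$ and $m+1$ is an index-shift in the paper's printed definition, not a mathematical bridge to be built. As a sanity check, take $\fc = \fb_{1,P}$ with $P$ a proper closed subset: then $\fc^0 = \fb_{0,\CZ}$ is trivial, so $\fc^0(1) \lneqq \fc(1)$ and $0 \in \Crit(\fc)$, while $\fc^1 = \fb_{1,P} = \fc$ already, so no $m \ge 1$ is critical; the theorem as literally written would then reduce $\fc$ to $\fb_{0,\CZ}$, which is wrong. The intended reading is that $m$ is critical when $\fc^{m-1}(m) \lneqq \fc(m)$, i.e.\ when $\fb_{m,\fc(m)}$ genuinely adds something to the preceding partial join. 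With that reading, the non-criticality of $m$ \emph{is} the equality $\fc^{m-1}(m) = \fc(m)$, and your inductive step is tautological — no $\ctwo$ manipulation is needed or possible. You should replace the propagation plan with this one-line observation (and note the index normalization), at which point your proof becomes a correct and somewhat more explicit version of the paper's argument.
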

\begin{proof}
The right-hand side is dominated by the left-hand side. Conversely, for every $n$, we have $\fc(n) \le \fb_{n,c(n)}(n)$, which accounts for the converse inclusion. 
\end{proof}
\begin{example} \label{eg:join_not_closed}
Using basic ideal functions, it is easy to demonstrate how the join of two closed ideal functions need not be closed. Indeed  
$$(\set{bc}_{1,p} \overline{\vee} \set{bc}_{1,\tau p})(n) = \left\{ \begin{array}{ll} \emptyset & \abs{n} \le 1 \\
\{p,\tau^{-1}p, \ldots , \tau^{-n+2}p\} & n \ge 2 \\
\{\tau^2 p, \tau^3 p, \ldots, \tau^{n} p \} & n \le -2 \end{array} \right. $$
which is not a closed ideal function. In fact one easily checks that the condition $\set{\fc}(1)=\emptyset$ immediately forces $\set{\fc}(n)=\emptyset$ for any closed ideal function. 
\end{example}

\begin{example} \label{eg:dynamical_factors}
For a fixed $q \in \N$ note that 
$$
\fb_{q,\CZ}(m) = \left\{ 
   \begin{array}{ll} 
   \CZ & m = nq, \\
   (0) & q \not{\mid} m  
   \end{array} \right.
$$  
is exactly the ideal function coming from the ``dynamical'' intermediate algebra $\cC^q = C(Y) \rtimes_r \CZ$. Namely the one coming from the $q$-fold factor $X \stackrel{\times q}{\longrightarrow} Y$ of the circle. Note that by Suzuki's theorem \cite{Suz18} mentioned at the beginning of the introduction, any larger ideal function is of the form $\fb_{r,\CZ}$ for some $r|q$. 
\end{example}

\subsection{The algebra \texorpdfstring{$\cA_{\fc}$}{}}
We now turn to define a functor in the other direction $\Psi:\fD \rightarrow \fA$.
\begin{defn} \label{def:Phi}
For $\fc \in \fD$ we define 
$$\cA_{\fc}' = \left\langle e^{inx} \eta_n \ | \ n \in \N, \eta_n \in \fc(n) \right \rangle,$$
As the abstract $*$-algebra, generated by these elements, and set $\Psi(\fc) =  \cA_{\fc} = \overline{\cA_{\fc}'}$ to be its closure. 
\end{defn}
For $\fc \in \fC$, the algebra $\cA'_{\fc}$ assumes a much more explicit form. In fact it follows directly from the conditions $\cone, \ctwo$ that 
\begin{equation} \label{eqn:explicit_Ac} 
\cA_{\fc}' = \left\{\sum_{n=-N}^{N} e^{inx} \eta_n \ | \ N \in \N, \eta_n \in \fc(n) \right\}.
\end{equation}
namely, the collection of all such generalized trigonometric polynomials is closed under $*$ and multiplication. For this reason, it is much easier to work with closed ideal functions $\fc \in \fC$. Moreover, limiting our attention only to closed ideal functions does not limit the generality as our main theorem asserts in particular that $\Psi(\fC) = \Psi(\fD)$.
\begin{prop}
For $\fc \in \fC$, $\cA_{\fc}=\Psi(\fc)$ is an intermediate $C^{*}$-subalgebra $\CZ < \cA_{\fc} < \cC$. Moreover, the following conditions are equivalent:
\begin{enumerate}
    \item \label{itm:cq} $\fc(q) = \CZ$ for some $0 \ne q \in \Z$,
    \item \label{itm:standard} $\cA_{\fc} = \cC^q$ comes from a  dynamical factor $(X,T) \stackrel{\times q}{\longrightarrow} (X,T^q)$,
    \item \label{itm:cross} $\cA_{\fc} = \cB \rtimes_r \Z$ for some subalgebra $\cB$ of $C(X)$,
    \item \label{itm:intersection} $\cA_{\fc} \cap C(X) \ne \C$,
\end{enumerate}
    \end{prop}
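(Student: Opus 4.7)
The first assertion, that $\cA_\fc$ is an intermediate $C^*$-subalgebra, is essentially bookkeeping. Equation (\ref{eqn:explicit_Ac}) identifies $\cA_\fc'$ with the set of finite generalised trigonometric polynomials $\sum_{n=-N}^{N} e^{inx}\eta_n$ with $\eta_n \in \fc(n)$; closure under $^*$ and multiplication is precisely what the conditions $\cone, \ctwo$ on $\fc$ were engineered to furnish via Lemma \ref{lem:alg_opp}, and the inclusion $\CZ \subset \cA_\fc'$ is immediate from $\fc(0)=\CZ$. Passing to the norm closure yields a $C^*$-subalgebra of $\cC$ containing $\CZ$.

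For the equivalence I argue cyclically (1)$\Rightarrow$(2)$\Rightarrow$(3)$\Rightarrow$(4)$\Rightarrow$(1). For (1)$\Rightarrow$(2), use $\cone$ to replace $q$ by its absolute value and pick the smallest positive $q_0$ with $\fc(q_0)=\CZ$; iterating $\ctwo$ with $m=n=q_0$ yields $\fc(nq_0)=\CZ$ for every $n\ge 0$, and $\cone$ covers the negative case. Hence $e^{inq_0 x}\in \cA_\fc$ for all $n$, so $C^{q_0}(X)\subset\cA_\fc$, and together with $\lambda_1\in\CZ\subset\cA_\fc$ this gives $\cC^{q_0}\subset\cA_\fc\subset\cC$. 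Suzuki's theorem, quoted in the introduction, then forces $\cA_\fc=\cC^r$ for some $r\mid q_0$. The implications (2)$\Rightarrow$(3) and (3)$\Rightarrow$(4) are immediate: $\cC^q=C^q(X)\rtimes_r\Z$ so one takes $\cB=C^q(X)\subset C(X)$, which is nontrivial whenever $q\ne 0$, and clearly $\cB\subset \cA_\fc\cap C(X)$.

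The substantive step is (4)$\Rightarrow$(1). I would first establish, as a lemma, that $\overbracket{a}(n)\in\fc(n)$ for every $a\in\cA_\fc$ and every $n\in\Z$. On a monomial $a=e^{ikx}\hat\phi$ with $\hat\phi\in\fc(k)$ this is a direct computation using the $\CZ$-bimodule property of $\CE$ together with $\mu(e^{imx})=\delta_{m,0}$: one obtains $\overbracket{a}(n)=\delta_{n,k}\hat\phi$. Linearity propagates the statement to the dense $*$-subalgebra $\cA_\fc'$, and norm-continuity of $\CE$ combined with closedness of the ideal $\fc(n)\lhd\CZ$ extends it to all of $\cA_\fc$. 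Now given (4), pick a non-constant $f\in\cA_\fc\cap C(X)$ with scalar Fourier expansion $f=\sum_m f_m e^{imx}$; then $\overbracket{f}(m)=f_m\cdot 1\in\fc(m)$, and choosing $m_0\ne 0$ with $f_{m_0}\ne 0$ puts a nonzero scalar, hence the unit, inside the closed ideal $\fc(m_0)\lhd\CZ$, so $\fc(m_0)=\CZ$, yielding (1). The only piece of real content is the lemma that generalised Fourier coefficients of elements in $\cA_\fc$ stay inside $\fc$; once it is in hand the remaining steps are pure algebra, plus a single invocation of Suzuki's theorem.
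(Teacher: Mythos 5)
Your proof is correct and follows essentially the same route as the paper's. The only stylistic difference is in (4)$\Rightarrow$(1): you package the key observation as a lemma (that $\overbracket{a}(n)\in\fc(n)$ for all $a\in\cA_\fc$ and all $n$, proved on monomials, extended by linearity and by continuity of $\CE$ together with closedness of $\fc(n)$), whereas the paper runs the equivalent $\epsilon$-approximation argument inline on the chosen $f\in\cA_\fc\cap C(X)$; both land on the same conclusion that a nonzero scalar, hence an invertible element, lies in $\fc(q)$. For (1)$\Rightarrow$(2), your explicit iteration of $\ctwo$ with $m=n=q_0$ is the hands-on version of the paper's appeal to minimality of $\fb_{q,\CZ}$ among closed ideal functions with $\CZ\le\fc(q)$, and both then invoke Suzuki's theorem.
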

 \begin{proof}
 $\cA_{\fc} = \Psi(\fc)$ is by definition a closed $*$-algebra and since $\fc(0)=\CZ$, we have $\CZ < \cA_{\fc}$.

The implications
(\ref{itm:standard}) $\Rightarrow$ (\ref{itm:cross}) $\Rightarrow$ (\ref{itm:intersection}) are all obvious. 

To prove (\ref{itm:cq}) $\Rightarrow$ (\ref{itm:standard}) note that if $\fc(q)=\CZ$ for some $q \ne 0$ then $\fb_{q,\CZ} \preceq \fc$. By Suzuki's theorem (see Example \ref{eg:dynamical_factors}), we know that $\fc$ is necessarily an intermediate dynamical algebra of the form $\cC^r$ for some $r|q$. If $q$ were the minimum positive value such that $\fc(q)=\CZ$, then $q=r$. 

To show that (\ref{itm:intersection}) $\Rightarrow$ (\ref{itm:cq}), let $f\in \mathcal{A}_{\fc}\cap C(X)$ be a non-constant function, with a non-trivial Fourier coefficient $\mu(fe^{-iqx})\ne 0$ for some $0 \ne q \in \mathbb{Z}$. Since $f \in \cA_{\fc}$, we can find an element $\sum_{n=-N}^Ne^{inx}\eta_n\in \mathcal{A}_{\fc}'$ such that 
\[\left\|f-\sum_{n=-N}^Ne^{inx}\eta_n\right\|< \epsilon.\]
Where the precise value of $\epsilon$ will be determined later. 
By possibly adding zero coefficients, we may assume that $q\in \{-N, -N+1,\ldots, N-1, N\}$. Multiplying the expression inside the norm, from the left by $e^{-iqx}$ and applying the conditional expectation $\CE$, we obtain:

\[\left\|\mu(fe^{-iqx})- \eta_{q} \right\|=\left\|\mu(fe^{-iqx})-\CE\left(\sum_{n=-N}^Ne^{i(n-q)x}\eta_n\right)\right\| < \epsilon\]
Where we used the fact that 
\(\CE\left(\sum_{n \ne q} e^{i(n-q)x}\eta_n\right)=0.\) 
Dividing out by $\mu(f e^{-iqx})$ we obtain 
\[\left\|1-\frac{\eta_{q}}{\mu(fe^{-iqx})}\right\|< 
\frac{\epsilon}{\abs{\mu(f e^{-iqx})}}\]
 
But if $\epsilon < \abs{\mu(fe^{-iqx})}$ this latter expression guarantees that $\eta_{q}$ is an invertible (and in particular non-zero) element in the algebra. And since by definition $\eta_{q} \in \fc(q)$ we conclude that $\fc(q) = \CZ$ proving (\ref{itm:cq}). 
\end{proof}

\subsection{Basic Subalgebras} \label{sec:basic}
We denote the algebra associated with a basic ideal function $\fb_{q, P}$ by $\cA_{q, P}$. It is easy to find a generator for this subalgebra. 
\begin{prop} \label{prop:prim_alg_gen}
If $\phi \in C(S^1)$ is any function with $\cZ \phi := \{t \in S^1 \ | \ \phi(t)=0\} = P$, namely any generator for the ideal of functions vanishing on $P$, then $\cA_{q,P} = \overline{\langle \CZ,e^{iqx} \hat{\phi} \rangle}$, %nam
the smallest closed subalgebra of $\cC$ containing the element $a = e^{iqx}\hat{\phi}$ and $\CZ$.
\end{prop}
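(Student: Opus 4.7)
The plan is to prove both inclusions of the claimed equality. The direction $\overline{\langle \CZ, a\rangle} \subset \cA_{q,P}$ (where $a = e^{iqx}\hat\phi$) is immediate: $\CZ = \fb_{q,P}(0) \cdot e^{0}$ is contained in $\cA_{q,P}$ by definition of $\Psi(\fb_{q,P})$, and $a$ is one of the generators listed in Definition \ref{def:Phi} since $\hat\phi \in \fb_{q,P}(q)$ (as $\phi$ vanishes exactly on $P = \set{\fb}_{q,P}(q)$).

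For the reverse inclusion, since $\fb_{q,P}$ is supported on $q\Z$, Equation (\ref{eqn:explicit_Ac}) gives
\[
\cA_{q,P} = \overline{\left\{\sum_{k=-N}^{N} e^{ikqx}\eta_k \;\big|\; N\in\N,\ \eta_k \in \fb_{q,P}(kq)\right\}}.
\]
Thus it suffices to show that $e^{ikqx}\eta \in \overline{\langle \CZ, a\rangle}$ for every $k\in\Z$ and every $\eta \in \fb_{q,P}(kq)$. The case $k=0$ is trivial, so assume $k \ne 0$.

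The key is to compute $a^k$ and $(a^*)^k$ explicitly using Lemma \ref{lem:alg_opp}. An easy induction on $k > 0$, using repeatedly the formula $e^{inx}\hat\alpha \cdot e^{imx}\hat\beta = e^{i(n+m)x}\widehat{(\alpha \circ \tau^m)\beta}$, yields
\[
a^k \;=\; e^{ikqx}\,\hat{\psi}_k, \qquad \psi_k := \prod_{j=0}^{k-1}\phi\circ\tau^{jq},
\]
and using also part (\ref{itm:c2}) of the lemma,
\[
(a^*)^k \;=\; e^{-ikqx}\,\hat{\psi}_{-k}, \qquad \psi_{-k} := \prod_{j=1}^{k}\bar\phi\circ\tau^{-jq}.
\]
The zero sets of $\psi_k$ and $\psi_{-k}$ on $S^1$ are, respectively,
\[
\cZ(\psi_k) = \bigcup_{j=0}^{k-1}\tau^{-jq}P = \set{\fb}_{q,P}(kq), \qquad
\cZ(\psi_{-k}) = \bigcup_{j=1}^{k}\tau^{jq}P = \set{\fb}_{q,P}(-kq),
\]
matching the formula in Example \ref{eg:prim} exactly. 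For any $\hat\chi \in \CZ$, applying Lemma \ref{lem:alg_opp}(3) with $m=0$ gives $a^k \cdot \hat\chi = e^{ikqx}\widehat{\psi_k\chi}$, so
\[
\{e^{ikqx}\widehat{\psi_k\chi} \;|\; \chi \in C(S^1)\} \;\subset\; \langle \CZ, a\rangle
\]
and likewise for $(a^*)^k$. Taking closures, and invoking the Gelfand--Naimark correspondence between closed ideals of $C(S^1)\cong\CZ$ and closed subsets of $S^1$ (under which the closure of the principal ideal $\hat\psi_k\cdot\CZ$ is precisely the ideal of all elements vanishing on $\cZ(\psi_k)$), we conclude that $e^{ikqx}\fb_{q,P}(kq) \subset \overline{\langle \CZ, a\rangle}$ for every $k \ne 0$. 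Summing finitely many such terms and closing finishes the inclusion.

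There is no serious obstacle here; the argument is essentially bookkeeping once one identifies $\hat\psi_k$ as a generator of the correct closed ideal. The only point worth flagging is the reliance on the Gelfand correspondence to pass from the principal (non-closed) ideal generated by $\hat\psi_k$ to the full closed ideal $\fb_{q,P}(kq)$ --- this is what makes it enough to know the single generator $\hat\phi$ rather than a whole family.
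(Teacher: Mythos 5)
Your proof is correct and follows essentially the same route as the paper's: compute $a^k$ and $(a^*)^k$ via Lemma \ref{lem:alg_opp}, identify the zero set of the resulting $C(S^1)$ function with $\set{\fb}_{q,P}(\pm kq)$, and pass to the closed ideal. The only difference is that you make the Gelfand--Naimark closure step explicit where the paper treats it as implicit in the equality $e^{imqx}\widehat{\phi(m)}\CZ = e^{imqx}\fb_{q,P}(mq)$.
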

\begin{proof}
Set $\cB = \overline{\langle \CZ,e^{iqx} \hat{\phi} \rangle}$. By definition $\CZ, e^{iqx} \hat{\phi}$ are both contained in $\cA_{q,P}$ and hence $\cB < \cA_{q,P}$. Conversely for any $m \in \N$ applying successively Equation $(3)$ from Lemma \ref{lem:alg_opp} $m$ times we obtain 
$$
a^m = e^{imqx} \left[(\phi \circ \tau^{(m-1)q}) (\phi \circ \tau^{(m-2)q}) \ldots \phi \right]^{\wedge}.
$$
If we denote the function inside the square brackets by $\phi(m)$ we see immediately that $\cZ \phi(m) = P \cup \tau^{-q}P \cup \ldots \cup \tau^{-q(m-1)}P = {\set{{\fb}}}_{q,P}(m)$. And consequently $e^{imqx} \widehat{\phi(m)} \CZ = e^{imqx} \fb_{q,P}(mq) \subset \cB$. Similarly, using Equation $(2)$ from the same lemma, we obtain a similar inclusion for the negative values of $m$. Hence, $\cA_{q, P}' \subset \cB$, and we get the desired inclusion by passing to the closure. 
\end{proof}

\subsection{Ian Putnam's \texorpdfstring{$Y$}{}-orbit breaking subalgebras}\label{Putnam}

Following \cite{Phill}, we recall the following definition:

\begin{definition}
 Let $X$ be a compact metric space and $T: X \to X$ be a homeomorphism. Consider the transformation group 
 $C^*$-algebra $\mathcal{C} = C(X) \rtimes_r \Z$.  For a closed subset $Y \subset X$, we define the 
 $C^*$-subalgebra $\mathcal{C}_Y$ to be
 the $C^*$-subalgebra of $\mathcal{C}$ generated by $C(X)$ and $\lambda_1 C_0(X \setminus Y)$.
 We say that $\mathcal{C}_Y$ is the {\it $Y$-orbit breaking subalgebra} of $\mathcal{C}$.
 \end{definition}
 
Applying the duality involution interchanging the unitary operators $U= M_{e^{it}}$ with $V = \lambda_1$
and $T= R_\alpha$ with $\tau = R_{-\alpha}$, as in  of Remark
\ref{cor:symmetry} above,
we can interpret Putnam's $Y$-orbit breaking subalgebras as follows:

\begin{definition}
 Let $X$ be a compact metric space and $T: X \to X$ be a homeomorphism. Consider the transformation group 
 $C^*$-algebra $\mathcal{C} = C(X) \rtimes_r \Z$. For a closed subset $P \subset S^1$, we define the 
 $C^*$-subalgebra $\mathcal{C}_P$ to be
 the $C^*$-subalgebra of $\mathcal{C}$ generated by $C(S^1)$ and $e^{ix} C_0(S^1\setminus P)$.
 \end{definition} 
 
 Now comparing this definition with our definition of the basic subalgebra $\mathcal{A}_{1, P}$,
 we conclude that $\mathcal{C}_P = \mathcal{A}_{1,P}$, so that in the particular
 case of the irrational crossed product $C^*$-algebra, the basic algebra $\mathcal{A}_{1,P}$
 and Putnam's $Y$-orbit breaking subalgebra is the same object.

\subsection{Ideals in intermediate subalgebras}
\noindent
Let $\cA_{\fc} = \Psi(\fc) \in \fA$, and $J \lhd \cA_{\fc}$ be a closed two-sided ideal. Just like we did in the case of algebras, we can associate with $J$ an ideal valued function $\Phi_{\fc}(J) = \fj: \Z \rightarrow \cI$ by setting
$$\fj(n) = \{\eta \in \fc(n) \ | \ e^{inx} \eta \in J\}.$$ A calculation analogous to the one carried out in Lemma \ref{lem:alg_opp} and Proposition \ref{cor:O_props} yields the following properties for this function. 
\begin{definition}\label{def:j}
    Given $\fc \in \fC$ a function $\fj:\Z \rightarrow \cI$ will be called a {\it{$\fc$-ideal function}} if $\set{\fj}(n) \supset \set{\fc}(n), \forall n \in \Z$. 
  The {\it support} of $\fj$ is the set 
$$
\Supp(\fj) = \{n \in \mathbb{Z} \ | \ \fj(n) \ne (0)\}
= \{n \in \mathbb{Z} \ |\  \set{\fj}(n) \not = S^1\}.
$$
    A $\fc$-ideal function will be called {\it{closed}} if in addition it satisfies the following two conditions:
    \begin{itemize}
    \item [$\cone(\fc)$] $\set{\fj}(-n) = \tau^{n}\set{\fj}(n), \ \forall n \in \Z$,
    \item [$\ctwoR{\fc}$] $\set{\fj}(m+n) \subset \tau^{-m} \set{\fj}(n) \cup \set{\fc}(m)$ for every $m \in \Supp(\fc), n \in \Supp(\fj)$.
\end{itemize}
\end{definition}

\begin{remark} \label{rem:symmetric_j}
Note that the second condition above is stated in a highly non-symmetric form. A longer but more explicit form, in the case where $m,n \in \Supp(\fj)$ would read: 
\begin{align*}&\set{\fj}(m+n) \\&\subset \left(\tau^{-m} \set{\fj}(n) \cup \set{\fc}(m) \right) \cap  
\left(\tau^{-n} \set{\fj}(m) \cup \set{\fc}(n) \right) \cap  
\left(\tau^{-m} \set{\fc}(n) \cup \set{\fj}(m) \right) \cap  
\left(\tau^{-n} \set{\fc}(m) \cup \set{\fj}(n) \right)
 \end{align*}
\end{remark}
\begin{remark} \label{rem:positive_j}
Similar to the case of ideal functions, $\fc$-ideal functions are completely determined by their values on $\N_0 = \N \cup \{0\}$. For a function $\set{\fj}: \N_0 \rightarrow \Cl(S^1)$ satisfying $\set{\fj}(n) \supset \set{\fc}(n), \forall n \in \N$ it is enough to verify the following conditions for $m,n \in \N_0$ with $m \in \Supp(\fc), n \in \Supp(\fj)$ in order to guarantee that the unique extension to $\Z$ forms a legal closed $\fc$-ideal function:
\begin{eqnarray*}
\set{\fj}(m+n) & \subset & \left[\tau^{-m} \set{\fj}(n) \cup \set{\fc}(m)\right] \cap \left[\tau^{-n} \set{\fc}(m) \cup \set{\fj}(n) \right], \\
\set{\fj}(m-n) & \subset & \left[ \tau^{n-m} \set{\fj}(n) \cup \set{\fc}(m) \right] \cap \left[\tau^n \left(\set{\fc}(m) \cup \set{\fj}(n) \right) \right], \forall 0 \le n \le m, \\
\set{\fj}(n-m) & \subset & \left[ \tau^{m-n} \set{\fc}(m) \cup \set{\fj}(n) \right] \cap \left[\tau^m \left(\set{\fc}(m) \cup \set{\fj}(n) \right) \right], \forall 0 \le m \le n,
\end{eqnarray*}
\end{remark}
\begin{proof}
    We extend the function $\fj$ to $\Z$ using $\cone(\fc)$. We must verify condition $\ctwoR{\fc}$ in the three missing cases. In all cases we assume $m,n \in \N_0$ with $m \in \Supp(\fc), n \in \Supp(\fj)$.

\begin{small}
\begin{align*}
\set{\fj}(-m-n) & =  \tau^{m+n}\set{\fj}(m+n) \subset \tau^{m+n} \left(\tau^{-n} \set{\fc}(m) \cup \set{\fj}(n)  \right) = \tau^{m} \set{\fj}(-n) \cup \set{\fc}(-m) & \\
\fj(-m+n) & = \tau^{m-n}\fj(m-n) \subset \tau^{m-n} \tau^{n} (\set{\fc}(m) \cup \set{\fj}(n) = \tau^{m}(\set{\fc}(m) \cup \set{\fj}(n), & 0 \le n \le m \\
\set{\fj}(m-n) & = \tau^{n-m}\fj(n-m) \subset \tau^{n-m} (\tau^{m-n} \set{\fc}(m) \cup \set{\fj}(n)) = \tau^{-m} \fj(-n) \cup \set{\fc}(m), & 0 \le m \le n \\
\end{align*}
\end{small}
As required. 
\end{proof}
Conversely, with any $\fc$-ideal function, we can associate an ideal. 
\begin{definition}
Given any $\fc$-ideal function $\fj$ as above, we define
$$J_{\fj} = \Psi_{\fc}(\fj) = \overline{\left\langle \left. e^{inx} \eta_n \right| n \in \Z, \ \eta_n \in \fj(n) \right \rangle} \lhd \Psi(\fc).$$ 
\end{definition}
\begin{prop} [Properties of $\fc$-ideal functions] \label{prop:prop}
Let $J = \Psi_{\fc}(\fj)$ for some closed $\fc$-ideal function as above. Then 
\begin{itemize}
\item
$\fj(0) = \mathbb{E}_\mu(J) =J \cap C^*(\Z)$.
\item $J = \cA_{\fc}$ if and only if  $\fj(0) = \CZ$, if and only if $\set{\fj}(0) = \emptyset$,
\item $J = (0)$ if and only if 
  $J \cap C^*(\Z) =(0)$,  if and only if 
$\set{\fj}(0) = S^1$.
\item $\set{\fj}(0) \subset \set{\fj}(n), \forall n \in \Z$.
\end{itemize}
In particular $J$ is nontrivial in the sense that  $(0) \lneqq J \lneqq \cA$, if and only if $\emptyset \subsetneq \set{\fj}(0) \subsetneq S^1$.

\begin{proof}
The first three items are clear; for the last item,
it is enough to consider $n \in \Supp(\fj)$. Using $\ctwoR{\fc}$ we obtain for such $n$
$$
\set{\fj}(0) = \set{\fj}(n+(-n)) \subset \tau^{-n}\tau^{n} \set{\fj}(n) \cup \set{\fc}(n) = \set{\fj}(n) \cup \set{\fc}(n) = \set{\fj}(n).
$$
\end{proof}
\end{prop}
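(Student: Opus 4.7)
The plan is to prove item (4) first as the key technical step and then deduce the other three items from it via the conditional expectation $\CE_\mu$ and Gelfand duality between ideals of $\CZ\cong C(S^1)$ and closed subsets of $S^1$.

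For item (4), I would split on whether $n\in\Supp(\fj)$. If $n\notin\Supp(\fj)$ then $\set{\fj}(n)=S^1$ trivially contains $\set{\fj}(0)$. If $n\in\Supp(\fj)$, then $-n\in\Supp(\fj)$ by $\cone(\fc)$, and also $n\in\Supp(\fc)$ because $\fj(n)\le\fc(n)$ is nonzero; so $\ctwoR{\fc}$ applies to the decomposition $0=n+(-n)$, and combined with $\cone(\fc)$ and the defining inclusion $\set{\fc}(n)\subset\set{\fj}(n)$ yields
$$\set{\fj}(0)\subset\tau^{-n}\set{\fj}(-n)\cup\set{\fc}(n)=\set{\fj}(n)\cup\set{\fc}(n)=\set{\fj}(n).$$

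Item (1) has one trivial inclusion, $\fj(0)\subset J\cap\CZ$, coming from the generator $e^{i\cdot 0\cdot x}\eta=\eta\in J\cap\CZ$ for $\eta\in\fj(0)$. For the reverse, the point is that since $\fj$ is closed, Lemma \ref{lem:alg_opp} together with $\cone(\fc)$ and $\ctwoR{\fc}$ ensures that products and adjoints of generators $e^{inx}\eta_n$ (with $\eta_n\in\fj(n)$) are again of the same form, so the dense $*$-subalgebra defining $J$ is precisely the linear span of these generalized monomials. Applying $\CE_\mu$ to a finite sum $\sum_n e^{inx}\eta_n$ extracts the $n=0$ coefficient $\eta_0\in\fj(0)$; by continuity of $\CE_\mu$ and closedness of the ideal $\fj(0)$, this gives $\CE_\mu(J)\subset\fj(0)$. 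Combined with the fact that $\CE_\mu$ restricts to the identity on $\CZ$ (so $J\cap\CZ\subset\CE_\mu(J)$), this closes the loop $\fj(0)\subset J\cap\CZ\subset\CE_\mu(J)\subset\fj(0)$, yielding equality throughout.

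Items (2) and (3) then follow quickly. For (2): $\fj(0)=\CZ\iff\set{\fj}(0)=\emptyset$ by Gelfand, and $\fj(0)=\CZ\iff 1\in J\cap\CZ\iff 1\in J\iff J=\cA_\fc$ since $J$ is an ideal in the unital algebra $\cA_\fc$. For (3): the implication $J=(0)\Rightarrow J\cap\CZ=(0)$ is immediate; conversely $J\cap\CZ=(0)\iff\fj(0)=(0)\iff\set{\fj}(0)=S^1$, and then item (4) forces $\set{\fj}(n)=S^1$ (equivalently $\fj(n)=(0)$) for every $n$, so every generator of $J$ vanishes and $J=(0)$. The ``in particular'' statement is just the conjunction of (2) and (3). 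The only substantive obstacle is the approximation argument in (1), where closedness of $\fj$ must be converted into the structural statement that the generating $*$-algebra is a linear span on which $\CE_\mu$ can be computed coefficient-wise; everything else is definition-chasing together with item (4) and Gelfand duality.
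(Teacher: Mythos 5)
Your proof is correct and follows essentially the same route as the paper: for item (4) you apply $\ctwoR{\fc}$ to the decomposition $0 = n + (-n)$ and then simplify using $\cone(\fc)$ and the containment $\set{\fc}(n) \subset \set{\fj}(n)$, which is exactly the paper's computation, with the useful extra step of explicitly verifying the hypotheses $n \in \Supp(\fc)$ and $-n \in \Supp(\fj)$. For items (1)–(3), which the paper dismisses as "clear," you supply the intended arguments — the coefficient-extraction via $\CE_\mu$ using the fact (from the proof of Theorem \ref{thm:main_ideal}) that the linear span of monomials $e^{inx}\eta_n$ is already a $*$-ideal when $\fj$ is closed, followed by Gelfand duality and the unit-in-ideal observation.
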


\section{The main theorem}

\subsection{Extraction of generalized Fourier coefficients.}~\par
\vskip1mm
\noindent
Let $V_n = \Span\{e^{imx} \ | \ \abs{m}\le n\} < C(X)$ be the space of trigonometric polynomials of degree at most $n$. With respect to the standard $L^2$-metric, let $Q_n:C(X) \rightarrow V_n$ be the orthogonal projection and $V_n^{\perp} = \ker(Q_n)$. The projection $Q_n$ is explicitly given by the formula 
$Q_nf = \sum_{m=-n}^{n} \overbracket{f}(m) e^{imx}$. It is easy to verify that $Q_n$ is a $T$-equivariant map.

It turns out that $Q_n$ can be extended to the whole $\cC$. By abuse of notation, we denote the extended operator by the same name. 
\begin{prop}
\thlabel{extensionofQ} \label{prop:Qn_bounded}
The map $Q_n$ extends naturally to a bounded linear map, $Q_n: \cC \rightarrow \cC$, that restricts to the identity on $\CZ$.  
\begin{proof}
Clearly $Q_{n} \upharpoonright_{C(X)}:C(X)\to C(X)$ is a bounded linear operator. Using \cite[Proposition~1.10]{pisier2003introduction}, we see that $Q_{n}$ is completely bounded. Now, it follows from \cite[Theorem~3.5]{raeburn1989equivariant} that $Q_{n}$ extends uniquely to a completely bounded map, denoted again by $Q_{n}$ from $\mathcal{C}\to \mathcal{C}$ such that
for every $N$
\[Q_{n}\left(\sum_{i=-N}^N f_i\lambda_i\right)\mapsto \sum_{i=-N}^N Q_{n}(f_i)\lambda_i.\]
\end{proof}
\end{prop}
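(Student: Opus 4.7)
My plan is to realize $Q_n$ as a sum of spectral projections for a natural circle action on $\cC$, which makes boundedness automatic via averaging of $*$-automorphisms. Since $X = \mathbb{R}/2\pi\mathbb{Z}$ is itself a circle, the rotation group $S^1$ acts on $X$, and this action commutes with $T=R_\alpha$. Hence it induces a $\mathbb{Z}$-equivariant action $\beta$ on $C(X)$ by $\beta_\theta f = f\circ R_\theta$, which in turn extends to an action by $*$-automorphisms on $\cC$ fixing each $\lambda_k$. The cleanest way to see this extension exists is to implement it by conjugation with the unitary $U_\theta$ on $L^2(X,\mu)$ defined by $(U_\theta v)(x) = v(x+\theta)$, which commutes with the Koopman operator $U_T = \lambda_1$ and conjugates $M_f$ into $M_{f\circ R_\theta}$.

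For each $m \in \mathbb{Z}$ I would define the spectral projection
\[
P_m(a) = \int_{S^1} e^{-im\theta}\,\beta_\theta(a)\,\frac{d\theta}{2\pi},
\]
which is contractive on $\cC$ as an average of isometries. A direct computation on generators gives $P_m(e^{ikx}\lambda_j) = \delta_{m,k}\, e^{ikx}\lambda_j$, and hence $Q_n := \sum_{|m|\leq n} P_m$ is a bounded linear map $\cC \to \cC$ with $\|Q_n\| \leq 2n+1$. On $C(X)$ this agrees with the original Fourier truncation by density of trigonometric polynomials. For an algebraic element $a=\sum_i f_i\lambda_i$, the formula $Q_n(a)=\sum_i Q_n(f_i)\lambda_i$ drops out since $\beta_\theta(\lambda_i)=\lambda_i$ pulls through the integral. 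Since $\beta_\theta$ fixes every $\lambda_k$, we get $P_m(\lambda_k) = \delta_{m,0}\lambda_k$, so $Q_n\upharpoonright_{\CZ} = \mathrm{id}$.

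The main point where something could go wrong is precisely the passage from $C(X)$ to $\cC$: one needs $\beta$ to be a genuine $*$-automorphism action of $S^1$ on the $C^*$-algebra $\cC$, not merely on the algebraic crossed product. The unitary-implementation argument handles this cleanly because $U_\theta$ normalizes both $C(X)$ and $C^*_r(\mathbb{Z})$ inside $\mathbb{B}(L^2(X,\mu))$, and $\theta\mapsto U_\theta$ is strongly continuous. This route is conceptually distinct from the completely-bounded extension via Pisier and Raeburn that the statement seems to hint at, but it gives the same conclusion with the added bonus of an explicit description of $Q_n$ as a finite linear combination of contractive spectral projections.
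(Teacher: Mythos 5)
Your proof is correct, and it takes a genuinely different route from the paper's. The paper argues abstractly: it observes that $Q_n\upharpoonright_{C(X)}$ is bounded on the commutative algebra $C(X)$, upgrades boundedness to complete boundedness via Pisier, and then invokes Raeburn's extension theorem for equivariant completely bounded maps to obtain the map $\mathcal{C} \to \mathcal{C}$ with the formula $Q_n(\sum f_i\lambda_i) = \sum Q_n(f_i)\lambda_i$. You instead exploit the concrete fact that $X$ is itself a circle: the rotation action of $S^1$ on $X$ commutes with $T=R_\alpha$, so it is implemented by unitaries $U_\theta$ on $L^2(X,\mu)$ that normalize both $C(X)$ and $C^*_r(\Z)$, giving a strongly continuous action $\beta$ of $S^1$ on $\cC$ by $*$-automorphisms. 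Then $Q_n$ is realized as $\int_{S^1} D_n(\theta)\,\beta_\theta(\cdot)\,\frac{d\theta}{2\pi}$, a Dirichlet-kernel average of isometries, which is manifestly bounded (with an explicit norm bound $\|D_n\|_{L^1}$, better than your stated $2n+1$, though either suffices). Your approach buys an explicit formula, sidesteps the operator-space machinery entirely, and is philosophically of a piece with the Fejér approximation already used in the paper (Theorem~\ref{thm:Fejer}), which is the Fejér-kernel analogue of your Dirichlet-kernel construction; what the paper's route buys is generality, since Raeburn's theorem would apply even if $X$ carried no auxiliary group action commuting with $T$. One small technical point you gloss over: to make the integral a legitimate norm-convergent Riemann integral, one needs $\theta\mapsto\beta_\theta(a)$ to be norm-continuous for every $a\in\cC$; this follows from norm continuity on the dense algebraic crossed product together with the fact that each $\beta_\theta$ is isometric, and is worth a sentence.
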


\begin{defn} \label{def:Q}
For every element $a \in \cC$, let $q(a)$ be the smallest $q \in\mathbb{N}$ such that $Q_{q(a)}(a) \not \in \CZ$, with the convention that $q(a) = \infty$ whenever $a \in \CZ$ and $Q_{\infty} = \id$. 
\end{defn}
Explicitly for non-constant  $f \in C(X)$ we have $f \in V_{q(f)-1}^{\perp}$ and 
\begin{equation} \label{eqn:Qq}
f = Q_{q(f)}(f) + g  = e^{-iq(f)x}\overbracket{f}(-q(f)) +\overbracket{f}(0)
+ e^{iq(f)x}\overbracket{f}(q(f)) + g
\end{equation}
with $g \in V_{q(f)}^{\perp}$. A similar decomposition holds for every $a \in \cC$. Using Proposition \ref{prop:Qn_bounded} above, Equation (\ref{eqn:Qq}) can be applied to each Fourier coefficient $f_i = \mathbb{E}(a\lambda_i^*)$ of $a$ separately and $q(a)$ is just the minimum over all $q(f_i)$.  The following two propositions are crucial to our proof. 
\begin{prop} \thlabel{prop:Q}
Let $\cA \in \fA$. Then for every $a \in \cA \setminus C_r^*(\mathbb{Z})$ with $q = q(a)$, we have:
$$Q_{q(a)}(a) = e^{-iqx}\overbracket{a}(-q) +\overbracket{a}(0)
+ e^{iqx}\overbracket{a}(q)  \in \cA.$$
If in addition $a \in J \lhd \cA$, with $J$ 
a closed two-sided ideal, then $Q_{q(a)}(a) \in J$.
\end{prop}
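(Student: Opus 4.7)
The plan is to realize $Q_q(a)$ as a norm limit of explicit elements of $\cA$ by extracting the three surviving generalized Fourier modes $\overbracket{a}(0)$ and $\overbracket{a}(\pm q)$ one at a time.

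First I would unpack the hypothesis $q(a)=q$. By \thref{extensionofQ} and continuity one has $Q_{q-1}(a)=\sum_{|m|\le q-1}e^{imx}\overbracket{a}(m)$, and since this element lies in $\CZ$ (whose members satisfy $\overbracket{b}(m)=0$ for all $m\ne 0$), we must have $\overbracket{a}(m)=0$ for $1\le |m|\le q-1$. Hence
\[
Q_{q}(a)=\overbracket{a}(0)+e^{iqx}\overbracket{a}(q)+e^{-iqx}\overbracket{a}(-q),
\]
so it suffices to place each of these three summands in $\cA$, or in $J$ when $a\in J$.

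The key tool is the inner $\Z$-action on $\cC$ by conjugation: since $\CZ\subset\cA$, every element $\lambda_k a\lambda_{-k}$ lies in $\cA$ (and in $J$ when $a\in J$). Using Lemma \ref{lem:alg_opp} one computes $\lambda_k(e^{imx}\eta)\lambda_{-k}=e^{imk\alpha}e^{imx}\eta$ for every $\eta\in\CZ$, so conjugation by $\lambda_k$ simply multiplies the $m$-th generalized Fourier mode by the scalar $e^{imk\alpha}$. For each $r\in\{-q,0,q\}$ I therefore form the twisted Ces\`aro average
\[
M_K^{(r)}(a):=\frac{1}{2K+1}\sum_{k=-K}^K e^{-irk\alpha}\,\lambda_k a\lambda_{-k}\ \in\ \cA,
\]
which applies the multiplier $c_m^{K,r}:=\frac{1}{2K+1}\sum_{k=-K}^K e^{i(m-r)k\alpha}$ to the $m$-th mode of $a$. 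Irrationality of $\alpha/2\pi$ forces $c_m^{K,r}\to\delta_{m,r}$ pointwise in $m$ as $K\to\infty$, strongly suggesting $M_K^{(r)}(a)\to e^{irx}\overbracket{a}(r)$.

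The main obstacle is that the decay $c_m^{K,r}\to 0$ is \emph{not} uniform in $m$, so one cannot directly swap the sum over $m$ with the limit $K\to\infty$ in the full expansion of $a$. To circumvent this I would approximate $a$ in norm by the generalized Fej\'er means $\overbracket{\sigma}_n(a)$ of Theorem \ref{thm:Fejer}, which are finite generalized trigonometric polynomials. For each fixed $n$ the computation on these polynomials is immediate and gives $M_K^{(r)}(\overbracket{\sigma}_n(a))\to(1-\tfrac{|r|}{n+1})e^{irx}\overbracket{a}(r)$ in norm as $K\to\infty$. Combined with the uniform contractivity $\|M_K^{(r)}(b)\|\le\|b\|$ (each summand is a unit-modulus scalar times a $\lambda_k$-conjugate of $b$), a standard three-term $\epsilon/3$ estimate then yields $M_K^{(r)}(a)\to e^{irx}\overbracket{a}(r)$ in norm. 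By closedness of $\cA$, each of the three limits lies in $\cA$, and summing over $r\in\{-q,0,q\}$ produces $Q_q(a)\in\cA$. When $a\in J$ the same averages lie in $J$ throughout, so the limits, and hence $Q_q(a)$, remain in $J$.
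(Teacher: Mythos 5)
Your proof is correct, and it takes a genuinely different route from the paper's. Both proofs exploit the same basic mechanism — conjugation by $\lambda_k$ multiplies the $m$-th generalized Fourier mode by the scalar $e^{imk\alpha}$ — but the way you isolate modes is different. The paper's Lemma \thref{operatorwithnormone} builds a \emph{finite} combination $L_{N,\epsilon}=\sum_i\beta_i\lambda_{k_i}\cdot\lambda_{k_i}^*$ whose coefficients $\beta_i$ are chosen via roots of unity (so that $(e^{ik_j\alpha})^q$ runs through a cyclic group $H_2$ and averaging over $H_2$ approximately kills one chosen higher mode $r$, then iterates over a finite set $N$). This only approximately preserves mode $q$ and can preserve or kill mode $-q$, leading to the case analysis in the paper's proof and, later, to the "derivative" trick in Proposition \ref{prop:fourier_coef} to split $e^{iqx}\overbracket{a}(q)+e^{-iqx}\overbracket{a}(-q)$ into its summands. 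You instead use a twisted Ces\`aro/Birkhoff average $M_K^{(r)}$, relying on Weyl equidistribution of $\{k\alpha\}$ rather than on finite roots of unity; the uniform contractivity $\norm{M_K^{(r)}}\le 1$ together with the Fej\'er approximation $\overbracket{\sigma}_n(a)\to a$ legitimizes the interchange of limits via the standard $\epsilon/3$ estimate. Your argument cleanly extracts each mode $e^{irx}\overbracket{a}(r)$ \emph{individually}, so it in fact yields Proposition \ref{prop:fourier_coef} directly, bypassing both the case analysis in the paper's proof of \thref{prop:Q} and the derivative trick in the proof of Proposition \ref{prop:fourier_coef}. The tradeoff is that you invoke an infinite averaging process (needing closedness of $\cA$ and $J$ plus the $\epsilon/3$ argument), whereas the paper works with a single explicit finite-rank-of-$\Z$-averaging operator; but since closedness is already required by both, this costs nothing. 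All the steps — the observation that $q(a)=q$ forces $\overbracket{a}(m)=0$ for $1\le|m|\le q-1$, the multiplier computation, the contractivity bound, and the invariance of $J$ under $\lambda_k$-conjugation — check out.
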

\begin{lemma}
\thlabel{operatorwithnormone}
Let $N\subset\N$ be a finite set with each $r\in N$ satisfying $r>q$.
Then, for every $\epsilon>0$ there exist $n \in \N$ and numbers $\{k_i \in \Z \ | \ 1 \le i \le n\}$ and $\{\beta_i \in \C \ | \ 1 \le i \le n\}$, with $\sum_{i=1}^{n} \abs{\beta_i} = 1$ such that the operator
     $$L_{N,\epsilon}(a)=\sum_{i=1}^n\beta_i\lambda_{k_i} a\lambda_{k_i}^*,$$
has norm at most one and satisfies 
 $L_{N,\epsilon}(e^{iqx})\approx_{\epsilon} e^{iqx}$ and $L_{N,\epsilon}(e^{irx}) \approx_{\epsilon}0$ for every $r\in N$.
Moreover, the operator $L_{N,\epsilon}$ can also be made to satisfy $L_{N,\epsilon}(e^{-iqx})\approx_{\epsilon} e^{-iqx}$, or $L_{N,\epsilon}(e^{-iqx})\approx_{\epsilon} 0$.
 \end{lemma}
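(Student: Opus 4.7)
The proof hinges on the simple calculation
\[
\lambda_k e^{irx}\lambda_k^{*} \;=\; e^{ir(x+k\alpha)} \;=\; e^{irk\alpha}\,e^{irx},
\]
so that $L_{N,\epsilon}(e^{irx}) = B(r)\,e^{irx}$ where $B(r) := \sum_{i=1}^n \beta_i\,e^{irk_i\alpha}$. Since each map $a\mapsto \lambda_{k_i}a\lambda_{k_i}^{*}$ is an isometry, the triangle inequality automatically yields $\|L_{N,\epsilon}\|\le \sum_i|\beta_i| = 1$. The whole problem therefore reduces to choosing finitely many pairs $(\beta_i,k_i)$ with $\sum_i|\beta_i|=1$ so that $B(q)\approx 1$, $B(r)\approx 0$ for $r\in N$, and either $B(-q)\approx 1$ or $B(-q)\approx 0$.

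For the main conclusions together with the ``$B(-q)\approx 0$'' alternative I would take $k_i = i-1$ for $i=1,\ldots,n$ and set
\[
\beta_i \;=\; \frac{1}{n}\,e^{-iqk_i\alpha}.
\]
Then $\sum_i|\beta_i|=1$ and $B(q) = \frac{1}{n}\sum_{i=0}^{n-1}1 = 1$ exactly. For any nonzero integer $s$ the geometric sum satisfies
\[
\Bigl|\frac{1}{n}\sum_{k=0}^{n-1} e^{isk\alpha}\Bigr| \;=\; \Bigl|\frac{1 - e^{isn\alpha}}{n\,(1 - e^{is\alpha})}\Bigr| \;\le\; \frac{2}{n\,|1 - e^{is\alpha}|},
\]
which tends to $0$ since $\alpha/(2\pi)$ is irrational and $s\ne 0$ forces $e^{is\alpha}\ne 1$. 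Specializing to $s=r-q$ for each $r\in N$ (nonzero because $r>q$) and to $s=-2q$ for $B(-q)$, and choosing $n$ large enough, yields all the required approximations simultaneously.

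For the ``$B(-q)\approx e^{-iqx}$'' alternative one additionally needs $B(-q)\approx 1$. The natural plan is to restrict the shifts $k_i$ to the Bohr-type set $K_\delta := \{k\in\Z : |2qk\alpha \bmod 2\pi|<\delta\}$, which has positive density in $\Z$ by Weyl equidistribution. On $K_\delta$ the phases $e^{iqk\alpha}$ and $e^{-iqk\alpha}$ both approximate a common sign $\sigma_{k}\in\{\pm1\}$, so taking $\beta_i = c_i\,\sigma_{k_i}$ with $c_i\ge 0$ and $\sum_i c_i = 1$ forces $B(\pm q)\approx 1$. The residual sums $B(r) = \sum_i c_i\,\sigma_{k_i}\, e^{irk_i\alpha}$ for $r\in N$ then reduce, via the equidistribution of $k_i\alpha$ among the $2q$ cells $\{j\pi/q\}_{j=0}^{2q-1}$, to uniform averages of roots of unity, which one arranges to vanish by a suitable distribution of weights among the cells.

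The principal difficulty is this second alternative: the constraints $B(\pm q)\approx 1$ essentially pin the support of the $k_i$ into the rigid Bohr set $K_\delta$, and the cancellation of $B(r)$ for $r\in N$ has to be engineered entirely by distributing the weights among the $2q$ equidistribution cells. The first alternative, by contrast, is a one-line Fej\'er-type geometric estimate, and this asymmetry explains the ``or'' in the statement.
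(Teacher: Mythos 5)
Your main construction — the Fej\'er-type weights $\beta_i=\tfrac1n e^{-iq(i-1)\alpha}$, $k_i=i-1$ — is correct and in fact cleaner than the paper's. The paper handles a single frequency $r$ by choosing the $k_j$ so that $e^{ik_j\alpha}$ approximates an $r$-th root of unity and averaging the scalars $\overline{w_j}$ over the nontrivial subgroup $H_2=\{w^q: w^r=1\}\le S^1$, then deals with a general finite $N$ by composing $L_{r_m,\epsilon/m}\circ\cdots\circ L_{r_1,\epsilon/m}$; the ``moreover'' alternative is then determined automatically by whether every $|H_2|$ equals $2$. You instead handle all frequencies at once with a single geometric sum, getting $B(q)=1$ exactly and $B(s)\to 0$ for every $s\ne q$ (since $e^{is\alpha}\ne 1$ when $\alpha/2\pi$ is irrational), and the reduction to the scalar sequence $B(\cdot)$ together with the bound $\|L\|\le\sum_i|\beta_i|$ is identical in the two arguments. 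Your route avoids both the composition and the case analysis on $|H_2|$.

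Your sketch of the ``$L(e^{-iqx})\approx e^{-iqx}$'' alternative, however, is both unnecessary and unrepairable as written. The ``or'' in the statement is not a choice the user gets to make: the lemma only asserts that one of the two behaviours on $e^{-iqx}$ can be arranged, and your Fej\'er construction already always delivers $L(e^{-iqx})\approx 0$, which is all that the application in \thref{prop:Q} needs (one applies the lemma to both $a$ and $a^*$). In fact the simultaneous constraints $B(q)\approx 1$, $B(-q)\approx 1$, $B(r)\approx 0$ subject to $\sum_i|\beta_i|\le 1$ are \emph{incompatible} whenever some $r\in N$ satisfies $r\equiv q\pmod{2q}$, for instance $q=1$, $N=\{3\}$. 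A weak-$*$ limit would produce a complex Borel measure $\mu$ on $S^1$ with $\|\mu\|\le 1$, $\hat\mu(q)=\hat\mu(-q)=1$ and $\hat\mu(r)=0$; but $\hat\mu(q)=1=\|\mu\|$ forces $d\mu = z^{-q}\,d|\mu|$, then $\hat\mu(-q)=\int z^{-2q}\,d|\mu|=1$ forces $z^{2q}=1$ on the support of $|\mu|$, and then $\hat\mu(r)=\int z^{r-q}\,d|\mu|=1$ for every $r\equiv q\pmod{2q}$. This is exactly where your Bohr-set plan fails: the cell weights $w_0,\dots,w_{2q-1}$ would have to satisfy $\sum_j w_j\,e^{i\pi(q+r)j/q}=0$, but when $q+r\equiv 0\pmod{2q}$ the exponential is identically $1$, so the sum equals $\sum_j w_j=1$. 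You should simply drop the second-alternative paragraph; your first construction by itself proves the lemma.
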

\begin{proof} It is enough to prove the claim for $N =\{r\}$. Indeed, if this were the case, then given any finite subset $N=\{r_1,r_2,\ldots,r_n\}$, a standard induction argument shows that $L_{N,\epsilon}$ can be taken to be $L_{r_n,\frac{\epsilon}{n}}\circ L_{r_{n-1},\frac{\epsilon}{n}}\circ\dots\circ L_{r_1,\frac{\epsilon}{n}}$.
We now prove the claim for $N=\{r\}$. Consider the subgroup $H_1\le S^1$ of $r$-th roots of unity. Under the map $x\mapsto x^q$, the image of $H_1$ forms a smaller non-trivial subgroup $H_2\le S^1$ since $q<r$. Given any root of unity $w_j\in H_2$, we can choose some $k_j$ with the property that $e^{ik_j\alpha}$ is approximately an $r$-th root of unity such that $(e^{ik_j\alpha})^q\approx_{\epsilon} w_j$.\\
\noindent
Note that
\begin{equation*}
\label{eq:conjugationoperation}
    \lambda_ke^{iqx}\lambda_k^* = (e^{ik\alpha})^q e^{iqx}\text{ and }
    \lambda_ke^{irx}\lambda_k^* = (e^{ik\alpha})^r e^{irx}
\end{equation*}
Therefore, we see that
$$\lambda_{k_j}e^{iqx}\lambda_{k_j}^*\approx_{\epsilon} w_je^{iqx}$$
$$\lambda_{k_j}e^{irx}\lambda_{k_j}^* \approx_{\epsilon} e^{irx}$$
Multiplying by the scalar $\overline{w_j}$ on both sides, we see that 
$$\overline{w_j}\lambda_{k_j}e^{iqx}\lambda_{k_j}^*\approx_{\epsilon} e^{iqx}$$
$$\overline{w_j}\lambda_{k_j}e^{irx}\lambda_{k_j}^* \approx_{\epsilon} \overline{w_j}e^{irx}$$
It is now easy to see that $L_{r,\epsilon}$ given by
$$L_{r,\epsilon}(a)=\frac{1}{|H_2|}\sum_{w_j\in H_2}\overline{w_j}\lambda_{k_j} a\lambda_{k_j}^*$$ is the required map. This finishes the proof.    
\end{proof}
Note that $L$ is not a positive map on the crossed product, but it is still a completely bounded map with (CB) norm at most $1$.
\begin{proof}[Proof of \thref{prop:Q}]
Given any $a \in \mathcal{A}$, we may assume without loss of generality that the Fourier coefficient at zero is zero, and choose to write it as
$$a=e^{-iqx}a_{-q} +e^{iqx}a_q +g+h,$$
where $g$ is a finitely supported element with Fourier coefficients only supported on $|r|>q$, and $h$ is some element with $\|h\|<\epsilon$. Then choosing the operator $L$ as in \thref{operatorwithnormone}, we either have
\[L(a) = L(e^{-iqx}a_{-q} + e^{-iqx}a_q) + L(g) + L(h)\approx e^{-iqx}a_{-q} + e^{iqx}a_q + 0 + 0,\]
or, we have \[L(a) = L(e^{-iqx}a_{-q} + e^{-iqx}a_q) + L(g) + L(h)\approx 0 + e^{iqx}a_q + 0 + 0.\] Since $L(a)\in\mathcal{A}$, at least one of $e^{-iqx}a_{-q} + e^{iqx}a_q$ or $e^{iqx}a_q$ lies in $\mathcal{A}$, since at least one of the above statements is true for a sequence of epsilons converging to zero. By performing a similar argument with $a^*$ instead of $a$, we can also conclude that at least one of $e^{-iqx}a_{-q} + e^{iqx}a_q$ or $e^{-iqx}a_{-q}$ lies in $\mathcal{A}$. In any case, regardless of which combination of statements is true, we may one way or another obtain that $e^{-iqx}a_{-q} + e^{iqx}a_q\in\mathcal{A}$.\\ A similar argument works for an ideal $J\triangleleft\mathcal{A}$ since $J$ is invariant under the conjugation action of $\mathbb{Z}$.
\end{proof}

We go one step further and extract individual generalized Fourier coefficients. 
\begin{prop}\label{prop:fourier_coef}
Let $\cA \in \fA$ and $a \in \cA$. Then for every $q \in \Z$ we have $e^{iqx} \overbracket{a}(q) \in \cA$. If in addition $a \in J \lhd \cA$, with $J$ a closed two-sided ideal, then $Q_q(a) \in J$. 
\end{prop}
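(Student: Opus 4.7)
The proof will iterate \thref{prop:Q}, after sharpening it to extract \emph{individual} Fourier terms rather than their combined triple. The case $q=0$ is immediate since $\overbracket{a}(0) = \CE(a) \in C^*_r(\Z) \subset \cA$, so assume $q \ne 0$.

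\textbf{Sharpening of \thref{prop:Q}.} I would first show that for $a\in\cA$ with $q(a) < \infty$, each term $e^{iq(a)x}\overbracket{a}(q(a))$ and $e^{-iq(a)x}\overbracket{a}(-q(a))$ lies in $\cA$ on its own. This rests on two observations. First, every operator $L_{N,\epsilon}$ from \thref{operatorwithnormone} satisfies
$$L_{N,\epsilon}(e^{ijx}\eta) = L_{N,\epsilon}(e^{ijx})\,\eta \qquad \forall\,\eta\in C^*_r(\Z),$$
because $\eta$ commutes with each $\lambda_{k_i}$ appearing in the defining sum. Second, the ``moreover'' clause of \thref{operatorwithnormone} permits the asymmetric choice $L(e^{-iq(a)x})\approx_\epsilon 0$ while retaining $L(e^{iq(a)x})\approx_\epsilon e^{iq(a)x}$ and $L(e^{irx})\approx_\epsilon 0$ for all $|r|>q(a)$ in $N$. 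Applying such an $L$ to $a - \overbracket{a}(0) \in \cA$, with $N$ containing the Fourier support of a finite-mode approximation of $a-\overbracket{a}(0)$ outside $\{\pm q(a)\}$, and rerunning the decomposition from the proof of \thref{prop:Q}, gives $L(a-\overbracket{a}(0)) \approx_\epsilon e^{iq(a)x}\overbracket{a}(q(a))$. Sending $\epsilon\to 0$ and using closedness of $\cA$ produces $e^{iq(a)x}\overbracket{a}(q(a))\in\cA$; the symmetric choice handles $e^{-iq(a)x}\overbracket{a}(-q(a))$.

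\textbf{Iteration.} Starting from $a^{(0)} := a - \overbracket{a}(0)\in\cA$, set $q_k := q(a^{(k)})$ and define inductively
$$a^{(k+1)} := a^{(k)} - e^{iq_k x}\overbracket{a}(q_k) - e^{-iq_k x}\overbracket{a}(-q_k),$$
which lies in $\cA$ by the sharpening. By construction the Fourier support of $a^{(k+1)}$ sits inside $\{|j|>q_k\}$, so $q_0 < q_1 < q_2 < \cdots$ strictly increases. After at most $|q|$ iterations some $q_k\ge|q|$; at that stage the target coefficient $\overbracket{a}(q)$ has either been explicitly extracted in a previous step (if nonzero) or is identically zero, and in either case $e^{iqx}\overbracket{a}(q)\in\cA$.

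\textbf{Ideal case and main subtlety.} If furthermore $a \in J\lhd\cA$, each auxiliary element $L_{N,\epsilon}(a)$ is a finite linear combination of conjugates $\lambda_{k_i} a \lambda_{k_i}^*$ with $\lambda_{k_i}\in\cA$ and $J$ a two-sided ideal, hence lies in $J$. Closedness of $J$ then yields $e^{ijx}\overbracket{a}(j)\in J$ for every $j$, and consequently $Q_q(a) = \sum_{|j|\le q} e^{ijx}\overbracket{a}(j)\in J$. The only delicate point in the whole proof is the sharpening: one must verify that the operators of \thref{operatorwithnormone} genuinely admit the asymmetric option and that this asymmetric action commutes with right multiplication by $C^*_r(\Z)$-valued Fourier coefficients. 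Once that is in place, the iteration is purely formal.
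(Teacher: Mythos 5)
Your approach is correct in substance but takes a genuinely different route from the paper's, and it is worth spelling out the difference. Both arguments iterate down from $q(a)$; the real content in each is the single step of extracting $e^{\pm iq(a)x}\overbracket{a}(\pm q(a))$ from $\cA$. The paper does this in two stages: it first invokes \thref{prop:Q} to get the combined triple $Q_{q}(a) = e^{-iqx}\overbracket{a}(-q)+\overbracket{a}(0)+e^{iqx}\overbracket{a}(q) \in \cA$, and then separates the $\pm q$ terms via a \emph{derivative trick}, showing that
\[
a' := iq\,e^{iqx}\overbracket{a}(q) - iq\,e^{-iqx}\overbracket{a}(-q) \;=\; \lim_{j\to\infty} \frac{\lambda_{n_j}\,a\,\lambda_{-n_j} - a}{\overline{n_j\alpha}} \in \cA
\]
along close returns $n_j\alpha \to 0 \pmod{2\pi}$, and then solving $e^{iqx}\overbracket{a}(q)=\tfrac{a}{2}+\tfrac{a'}{2iq}$. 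You instead never form the symmetric triple: you re-run the averaging argument inside the proof of \thref{prop:Q} with an \emph{asymmetric} $L_{N,\epsilon}$ that fixes $e^{iqx}$ and annihilates $e^{-iqx}$. Your first observation, that $L_{N,\epsilon}(e^{ijx}\eta)=L_{N,\epsilon}(e^{ijx})\eta$ for $\eta\in C^*_r(\Z)$, is correct and is implicitly used by the paper too. The one load-bearing point, which you rightly flag as the only delicate one, is the reading of the ``moreover'' clause of \thref{operatorwithnormone} as a \emph{free choice}. Be aware that this clause is never proved in the paper — the proof of \thref{operatorwithnormone} treats only $N=\{r\}$ and is silent about it — and the paper's own use in \thref{prop:Q} is the weaker ``one of the two alternatives holds, possibly depending on $\epsilon$,'' which would \emph{not} suffice for your sharpening (it recovers only the triple). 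That said, your reading can be justified by inspecting the construction: for a single $r$, the coefficient of $e^{-iqx}$ under $L_{r,\epsilon}$ is approximately $\tfrac{1}{|H_2|}\sum_{w\in H_2}\overline{w}^2$, which vanishes exactly when $|H_2|=r/\gcd(r,q)>2$, i.e.\ for every $r>q$ except $r=2q$; so the composed $L_{N,\epsilon}$ kills $e^{-iqx}$ for any $N$ containing an element other than $2q$, and one can always append such an element. Once that is secured your proof goes through. In summary: the paper's derivative trick is self-contained given \thref{prop:Q} and avoids re-opening \thref{operatorwithnormone}; your route is shorter and in fact subsumes \thref{prop:Q}, but it silently requires strengthening (and actually proving) the ``moreover'' clause of that lemma.
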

\begin{proof}
First, we claim that it is enough to prove the proposition for the special case $q= \pm q(a)$. Indeed set $q_0 = q(a)$, Proposition \ref{prop:Q} implies that $a_1 := a - Q_{q_0}(a) \in \cA$. Moreover, for $q_1 = q(a_1) = q_0+1$ we have $\overbracket{a_1}(q_1) = \overbracket{a}(q_1)$. Continuing inductively in this form, we can extract all Fourier coefficients of $a$. Thus, we will assume that $q=q(a)$. Since $\CZ < \cA$ by assumption we may assume that $\overbracket{a}(0)=0$. By Proposition \ref{prop:Q} we can replace $a$ by $Q_q(a)$ and thus assume from now on that 
$$a = e^{iqx} \overbracket{a}(q) + e^{-iqx}\overbracket{a}(-q).$$ 
Let us define the derivative of $a$ to be the element 
$$a' = iqe^{iqx} \overbracket{a}(q) - iqe^{-iqx}\overbracket{a}(-q).$$ 
We claim that $a' \in \cA$. Indeed, since $\alpha$ is assumed to be irrational, we can find a sequence such that $\lim_{j \rightarrow \infty} \overline{n_j \alpha} = 0$ where $\overline{n_j \alpha}$ denotes the representative of $n_j \alpha + \Z$ in the interval $(-1/2,1/2]$. Now 
\begin{align*} 
a' & = \lim_{j \rightarrow \infty} \left(\frac{e^{iq(x+\overline{n_j \alpha})}-e^{iqx}}{\overline{n_j\alpha}} \overbracket{a}(q) + \frac{e^{-iq(x+\overline{n_j \alpha})}-e^{-iqx}}{\overline{n_j\alpha}} \overbracket{a}(-q) \right) \\
& = \lim_{j \rightarrow \infty} \frac{\lambda_{n_j} a \lambda_{-n_j} - a}{\overline{n_j \alpha}} 
\end{align*}
This convergence is uniform in $x$ because the exponential functions are twice continuously differentiable. Thus $a' \in \cA$ as claimed. Finally 
$$e^{iqx}\overbracket{a}(q) = \frac{a}{2}+\frac{a'}{2iq} \in \cA, \qquad \qquad e^{-iqx}\overbracket{a}(-q) = \frac{a}{2}-\frac{a'}{2iq} \in \cA.$$
Which completes the proof of the proposition. Again, the same proof works in the case $a \in J \lhd \cA$.
\end{proof}

\subsection{Proof of Theorems \ref{thm:main} and \ref{thm:main_ideal}}
Assume that $\Phi(\fc) < \cA$ for $\fc \in \fD, \cA \in \fA$. By definition this means that $e^{inx} \phi \in \cA$ for every $n \in \Z, \phi \in \fc(n)$; which in turn shows that 
$\phi \in \Psi(\cA)(n)$. Thus $\fc < \Psi(\cA)$.
Conversely suppose $\fc < \Psi(\cA)$ and assume that $\phi \in \fc(n) < \Psi(\cA)(n)$ for some $n \in \Z$. By definition of $\Psi(\cA)$ this means that $e^{inx} \phi \in \cA$. But by definition $\Phi(\fc) = \overline{ \langle e^{inx} \phi \ | \ \phi \in \fc(n) \rangle}$ and the desired inclusion $\Phi(\fc) < \cA$ follows. This verifies (\ref{itm:GC}) from the main theorem, namely that $\Phi$ and $\Psi$ form a Galois connection. Unlike the one in Galois theory, this is a monotone (i.e., order-preserving) Galois connection. However, one naturally obtains an order reversing Galois connection upon composing with the natural identification $\fD \cong \set{\fD}$.

The images $\Psi(\fA) \subset \fD$ and $\Phi(\fD) \subset \fA$ are now called closed ideal functions and closed intermediate algebras, respectively. Statement (\ref{itm:isom}) of the main theorem now follows from standard results about Galois connections, which we recall here: 
\begin{prop} \label{prop:galois_connection}
For a monotone Galois connection as above, the following hold: 
\begin{enumerate}[(i)]
\item \label{itm:gc1} $\fc \preceq \Psi \Phi \fc, \ \forall \fc \in \fD$ and $\cA \ge \Phi \Psi \cA, \forall \cA \in \fA$. 
\item \label{itm:gc2}  $\Phi(\fc_1)  \le \Phi(\fc_2)$ whenever $\fc_1 \preceq \fc_2$ and similarly $\Psi(\cA_1) \preceq \Psi(\cA_2)$ whenever $\cA_1 \le \cA_2$. 
\item \label{itm:gc3} $\Phi \Psi \Phi \fc = \Phi \fc, \ \forall \fc \in \fD$ and $\Psi \Phi \Psi \cA = \Psi \cA, \ \forall \cA \in \fA$. 
\item \label{itm:gc4} $\Phi\upharpoonright_{\Psi \fA}: \Psi \fA \rightarrow \Phi \fD$ is an isomorphism of lattices between the closed elements on both sides. And $\Psi$ is its inverse. 
\end{enumerate}
\end{prop}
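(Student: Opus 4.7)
\emph{Plan.} The argument will be entirely formal and will use nothing beyond the adjunction $\Phi(\fc) \le \cA \iff \fc \preceq \Psi(\cA)$, which is item (\ref{itm:GC}) of Theorem \ref{thm:main}. My plan is to chain standard ``unit / counit'' manipulations, treating $\Phi$ as the left adjoint and $\Psi$ as the right adjoint of a monotone Galois connection.

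For item (i), I would substitute $\cA := \Phi(\fc)$ into the adjunction: since $\Phi(\fc) \le \Phi(\fc)$ trivially, the backward direction immediately delivers $\fc \preceq \Psi\Phi(\fc)$. Taking instead $\fc := \Psi(\cA)$ and using $\Psi(\cA) \preceq \Psi(\cA)$ in the forward direction gives $\Phi\Psi(\cA) \le \cA$. For item (ii), given $\fc_1 \preceq \fc_2$, combining with (i) yields $\fc_1 \preceq \Psi\Phi(\fc_2)$, and the adjunction then forces $\Phi(\fc_1) \le \Phi(\fc_2)$; monotonicity of $\Psi$ is proved dually. For item (iii), I would first apply (ii) to the inequality $\fc \preceq \Psi\Phi(\fc)$ from (i) to obtain $\Phi(\fc) \le \Phi\Psi\Phi(\fc)$, and then apply (i) with $\cA := \Phi(\fc)$ to obtain the reverse inequality $\Phi\Psi\Phi(\fc) \le \Phi(\fc)$; the same bookkeeping, with the roles swapped, establishes $\Psi\Phi\Psi = \Psi$.

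For item (iv), item (iii) implies that $\Psi\Phi$ acts as the identity on the image $\Psi(\fA)$ and that $\Phi\Psi$ acts as the identity on $\Phi(\fD)$, so the restrictions $\Phi\upharpoonright_{\Psi \fA}$ and $\Psi\upharpoonright_{\Phi\fD}$ are mutually inverse monotone bijections, hence order isomorphisms between the posets of closed elements. Any order isomorphism between posets automatically preserves whatever meets and joins exist on each side, so once the lattice structures on $\Psi(\fA)$ and $\Phi(\fD)$ are in place the lattice isomorphism is automatic. The only genuine point of care, and the main place I expect a reader to pause, is that (as the text has already emphasised, see Example \ref{eg:join_not_closed}) the join on the ideal-function side cannot be taken to be the ambient join of $\fD$; it has to be replaced by the \emph{closed join} $\fc_1 \vee \fc_2 := \Psi\Phi(\fc_1 \overline\vee \fc_2)$. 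This is already built into the definition of the lattice structure on $\fC$, and is precisely the operation that our order isomorphism will transport to the ordinary intersection and closure-of-union operations on $\fA$. No new analytic input is required anywhere in the proof.
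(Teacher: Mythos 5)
Your proof is correct and follows essentially the same route as the paper: all four items are derived purely formally from the adjunction $\Phi(\fc)\le\cA\iff\fc\preceq\Psi(\cA)$ via the standard unit/counit manipulations, and the paper likewise treats (iv) as an immediate consequence of (iii). Your extra remark that the lattice isomorphism in (iv) concerns the \emph{closed} join $\Psi\Phi(\fc_1\overline\vee\fc_2)$ rather than the ambient join on $\fD$ is a correct and worthwhile clarification, consistent with how the paper sets up the lattice structure on $\fC$.
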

\begin{proof}
Applying the Galois connection property to $\Phi(\fc) \le \Phi(\fc)$ and to $\Psi \cA \preceq \Psi \cA$ yields \ref{itm:gc1}. Assume that $\fc_1 \preceq \fc_2$. By \ref{itm:gc1} we have $\fc_1 \preceq \fc_2 \preceq \Psi\Phi \fc_2$, and the defining property of the Galois connection yields $\Phi(\fc_1) \leq \Phi(\fc_2)$. The dual assertion in \ref{itm:gc2} is proved similarly. Now applying \ref{itm:gc1} to $\Phi(\fc) \in \fA$, immediately implies that $\Phi \fc \ge \Phi \Psi \Phi \fc, \ \forall \fc \in \fD$. Similarly \ref{itm:gc1} also implies that $\fc \preceq \Psi \Phi \fc$, and by \ref{itm:gc2} $\Phi \fc \le \Phi \Psi \Phi \fc$. Equality $\Phi \fc = \Phi \Psi \Phi \fc, \ \forall \fc \in \fD$ follows which is one clause in \ref{itm:gc3}. The dual clause is proved identically. The last property \ref{itm:gc4} is a direct consequence of \ref{itm:gc3}. 
\end{proof}
The property \ref{itm:gc2} above is referred to as the monotonicity property for obvious reasons. In our case, it is evident from the definitions of $\Phi,\Psi$, but as we saw above, it also follows formally from the defining property of a monotone Galois connection. We now turn to the less formal statements of the main theorem.

Let $\cA \in \fA$ be an intermediate algebra and denote $\fc = \Psi \cA$. By the above Proposition, $\cA \ge \Phi \fc$. We must establish the opposite inclusion to prove (\ref{itm:perfect}). Thus let $a \in \cA$, by Proposition \ref{prop:fourier_coef} we know that $e^{inx} \overbracket{a}(n) \in \cA, \ \forall n \in \Z$, which by definition means that $\overbracket{a}(n) \in \fc(n), \ \forall n \in \Z$. By definition of $\Phi$ this means that $e^{inx} \overbracket{a}(n) \in \Phi(\fc), \ \forall n \in \Z$. Thus $Q_n(a) = \sum_{k=-n}^{n} e^{ikx} \overbracket{a}(k) \in \Phi(\fc)$ and by Fej\'{e}r's Theorem \ref{thm:Fejer} we deduce that 
$$a = \lim_{N \rightarrow \infty} \frac{1}{N} \sum_{n = 0}^{N-1} Q_n (a) \in \Phi(\fc),$$ showing that the Galois connection is perfect on the algebra side, and finishing the proof of (\ref{itm:perfect}).  

On the other side of the Galois connection, Proposition \ref{prop:galois_connection} implies $\fc \le \Psi \Phi \fc$ for every $\fc \in \fD$. To prove (\ref{itm:closed_fc}), we must show that the ideal functions satisfying the opposite inclusion are exactly these in $\fC$. From Proposition \ref{cor:O_props}, we already know that the conditions $\cone,\ctwo$, defining the class $\fC$, are necessary for an ideal function to be closed. It remains to show that these conditions are also sufficient. 

Let $\fc \in \fC$, set $\cA = \Phi \fc$ and assume, towards a contradiction, that $\eta \in \Psi \cA (n) \setminus \fc(n)$ for some $n$. We may assume that $n \in \N$ is the minimal such index. By definition $\eta \in \Psi(\cA)(n)$ means that $e^{inx} \eta \in \cA$. 

Since $\fc \in \fC$ 
$$\cA_{\fc}' = \left\{ \sum_{k = -K}^{K} e^{ikx} \eta_k \ | \ K \in \N, \eta_k \in \fc(k)\right\}.$$
is an algebra, closed under the $*$-operation and by definition $\cA = \Phi(\fc)$ is its closure in $\cC$. Thus for every $\epsilon > 0$ we can find an approximation 
$$
    \norm {e^{inx} \eta - \sum_{k = -K}^{K} e^{ikx} \eta_{k}} = \norm {(\eta - \eta_k) -\sum_{\substack{k = -K\\ k \ne n}}^{K} e^{i(k-n)x} \eta_{k}}<\epsilon
$$
Applying the 
conditional expectation $\mathbb{E}_{\mu}$ on both sides we obtain that
\[\norm {\mathbb{E}_{\mu}(\eta - \eta_k) -\mathbb{E}_{\mu}\left(\sum_{\substack{k = -K\\ k \ne n}}^{K} e^{i(k-n)x} \eta_{k}\right)}<\epsilon\]
Since $\eta_k$ falls in the multiplicative domain of $\mathbb{E}_{\mu}$ and $\mathbb{E}_{\mu}(e^{i(k-n)x})=0$ for $k\ne n$, we obtain that
$\norm{\eta - \eta_k} < \epsilon$. Since $\epsilon>0$ was arbitrary and the ideal $\fc(n) \lhd \CZ$ is closed, we obtain the desired contradiction. This completes the proof of Theorem \ref{thm:main}. \qed

We now turn to the proof of Theorem \ref{thm:main_ideal}. It is quite similar to the proof of Theorem~\ref{thm:main}, but we include all the details. Fix $\fc \in \fC$ and let $\cA = \Phi(\fc)$. That $\Phi_{\fc}: \fD_{\fc} \rightarrow \fJ_{\cA}$ and $\Psi_{\cA}:\fJ_{\cA} \rightarrow \fD_{\fc}$ form a monotone Galois connection follows as before. This proves (\ref{itm:GCj}), as well as (\ref{itm:isomj}), using Proposition \ref{prop:galois_connection}. 

To prove (\ref{itm:perfectj}), namely that the connection is perfect on the ideal side, we have to show that $J \leq \Phi_{\fc}(\fj)$ for every $J \in \fJ_{\cA}$ and $\fj=\Psi_{\cA}(J)$. Indeed, by Proposition \ref{prop:fourier_coef}, together with every $a \in J$ we have $e^{inx} \overbracket{a}(n) \in J$ for every $n \in \Z$. Hence by the definition of $\fj = \Psi_{\cA}(J)$ we know that   $\overbracket{a}(n) \in \fj(n), \ \forall n \in \Z$. We conclude using Fej\'{e}r approximations: 
$$a = \lim_{N \rightarrow \infty} \frac{1}{N} \sum_{n = 0}^{N-1} Q_n (a) \in \Phi_{\fa}(\fj).$$  

To prove (\ref{itm:closed_fc}) we must show that the $\fc$-ideal functions satisfying $\Psi_{\cA} \Phi_{\fc} \fj = \fj$ are exactly those satisfying conditions $\cone(\fc), \ctwoR{\fc}$ of Definition \ref{def:j}. Set $J = \Phi_{\fc}(\fj)$. In view of Lemma \ref{lem:alg_opp}, the conditions $\cone(\fc)$ and $\ctwoR{\fc}$ follow directly from the respective requirements that $(e^{inx}\hat{\phi})^* \in J$ and $e^{inx}\hat{\phi} e^{imx} \hat{\psi} \in J$, whenever $e^{inx} \hat{\phi} \in J, e^{imx}\hat{\psi} \in \cA$. Namely, they follow from $J$ being $*$-closed right ideal. It remains to show that if $\fj$ satisfies the conditions $\cone(\fc), \ctwoR{\fc}$ then $\Psi_{\cA} \Phi_{\fc}(\fj) \preceq \fj$, where by property \ref{itm:gc1} of Galois connections the other inclusion is automatic for every $\fj$. So let us assume by way of contradiction that $\eta \in \Psi_{\cA} J (n) \setminus \fj(n)$ for some $n \in \N$. 

By conditions $\cone(\fc), \ctwoR{\fc}$ 
$$J' = \left\{ \sum_{k = -K}^{K} e^{ikx} \eta_k \ | \ K \in \N, \eta_k \in \fj(k)\right\}.$$
is a $*$-closed right ideal of $\cA$. Hence, it is also automatically a left ideal. $J = \Phi_{\fc}(\fj)=\overline{J'}$ is by definition the closure. Thus for every $\epsilon > 0$ we can find an approximation 
$$
    \norm {e^{inx} \eta - \sum_{k = -K}^{K} e^{ikx} \eta_{k}} = \norm {(\eta - \eta_k) -\sum_{\substack{k = -K\\ k \ne n}}^{K} e^{i(k-n)x} \eta_{k}}<\epsilon
$$
Applying the 
conditional expectation $\mathbb{E}_{\mu}$ on both sides we obtain that
\[\norm {\mathbb{E}_{\mu}(\eta - \eta_k) -\mathbb{E}_{\mu}\left(\sum_{\substack{k = -K\\ k \ne n}}^{K} e^{i(k-n)x} \eta_{k}\right)}<\epsilon\]
Since $\eta_k$ falls in the multiplicative domain of $\mathbb{E}_{\mu}$ and $\mathbb{E}_{\mu}(e^{i(k-n)x})=0$ for $k\ne n$, we obtain that
$\norm{\eta - \eta_k} < \epsilon$. Since $\epsilon>0$ was arbitrary and the ideal $\fj(n)$ is closed, we obtain the desired contradiction and conclude the proof of Theorem \ref{thm:main_ideal}. \qed

\section{Structural results about subalgebras} \label{sec:structure}
\subsection{The closed join formula}\label{sub:closed_join}
In this section, we finally establish a closed formula for the closed join
of two algebras $\cA_1, \cA_2 \in \mathcal{A}$:

\begin{prop} \label{prop:closed_join}
Let $\mathcal{A}_1$ and $\mathcal{A}_2$ be two intermediate algebras with associated 
ideal functions $\fc_1,\fc_2 \in \fC$,
so that $\cA_i = \Phi(\fc_i)$. 
Then 
$$
\Psi \circ \Phi (\fc_1 \overline{\vee} \fc_2) = \Psi(\cA_1 \vee \cA_2).
$$
We call this closed ideal function {\it{the closed join}} of $\fc_1$ and $\fc_2$. We denote it by $\fc_1 \vee \fc_2$. A more explicit expression for the closed join $\fd =\fc_1 \vee \fc_2$
is given by
\begin{small}
\begin{gather*}
\set{\fd}(n) =\\
 \bigcap \tau^{-\sum_{k=2}^{s} j_k} \set{\fc}_{\epsilon}(j_1)  \cup \left(\tau^{-\sum_{k=3}^{s} j_k} \set{\fc}_{\epsilon+1}(j_2)\right) \cup \ldots \cup \left(\tau^{-j_s} \set{\fc}_{\epsilon+s-2}(j_{s-1})\right) \cup \set{\fc}_{\epsilon+s-1}(j_s),
\end{gather*}
\end{small}
where it is understood that $\set{\fc}_{h}$ is either  $\set{\fc}_1$ or $\set{\fc}_{2}$, depending on the parity of $h$. The intersection is taken over all possible values $s \ge 2$, $\epsilon \in \{1,2\}$ and $\{j_1,j_2,\ldots, j_s \} \subset \Z$ such that $\sum_{k=1}^s j_k = n$.
\end{prop}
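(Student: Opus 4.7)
The plan is to handle the proposition in two stages. First I would verify the identity $\Psi\circ\Phi(\fc_1\overline{\vee}\fc_2)=\Psi(\cA_1\vee\cA_2)$. This reduces to showing $\Phi(\fc_1\overline{\vee}\fc_2)=\cA_1\vee\cA_2$ at the algebra level. The inclusion $\cA_1\vee\cA_2\subset\Phi(\fc_1\overline{\vee}\fc_2)$ follows from perfectness (Theorem \ref{thm:main}(\ref{itm:perfect}) gives $\cA_i=\Phi(\fc_i)$) combined with $\fc_i(n)\subset(\fc_1\overline{\vee}\fc_2)(n)$. Conversely, each generator $e^{inx}\eta$ of $\Phi(\fc_1\overline{\vee}\fc_2)$ has $\eta\in\overline{\fc_1(n)+\fc_2(n)}$ and is therefore a norm limit of elements $e^{inx}(\eta_1^{(k)}+\eta_2^{(k)})\in\cA_1+\cA_2$; applying $\Psi$ to the resulting algebra identity yields the first claim.

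For the explicit formula, set $\fd=\Psi(\cA_1\vee\cA_2)$ and abbreviate the right-hand side of the formula by $S_n\in\Cl(S^1)$. To prove $\set{\fd}(n)\subset S_n$, I would produce, for every admissible tuple $(\epsilon,s,j_1,\ldots,j_s)$ with $\sum_k j_k=n$, an explicit element $\hat{\Phi}\in\fd(n)$ whose zero set equals the union $U$ appearing in that term of the formula. Concretely, pick $\phi_k\in C(S^1)$ with $\cZ(\phi_k)=\set{\fc}_{\epsilon+k-1}(j_k)$ (so $\hat{\phi}_k$ is a generator of the relevant ideal) and form the product $e^{ij_1x}\hat{\phi}_1\cdots e^{ij_sx}\hat{\phi}_s\in\cA_1\vee\cA_2$. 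Iterating Lemma \ref{lem:alg_opp}(\ref{itm:c3}) rewrites this as $e^{inx}\hat{\Phi}$ with $\Phi=(\phi_1\circ\tau^{j_2+\cdots+j_s})\cdots(\phi_{s-1}\circ\tau^{j_s})\phi_s$ and $\cZ(\Phi)=U$; hence $\set{\fd}(n)\subset U$, and intersecting over all tuples gives $\set{\fd}(n)\subset S_n$.

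For the reverse inclusion $S_n\subset\set{\fd}(n)$, I would fix $\eta\in\fd(n)$ and $\varepsilon>0$ and approximate $e^{inx}\eta$ within $\varepsilon$ by a finite element of the algebraic $*$-algebra generated by $\cA_1\cup\cA_2$. Using Equation~\eqref{eqn:explicit_Ac} to expand each $\cA_i$-factor as a finite sum of generators, and consolidating any two consecutive factors from the same $\cA_i$ into a single $\cA_i$-generator (which is legitimate because $\cA_i$ is closed under multiplication, equivalently because $\fc_i\in\fC$ satisfies $\ctwo$), I may assume the approximant has the form $\sum_l c_l\, e^{i(\sum_k j_k^{(l)})x}\hat{\Phi}^{(l)}$ with each $\hat{\Phi}^{(l)}$ an alternating product as in the previous paragraph. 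Applying the norm-decreasing map $\overbracket{\cdot}(n)=\CE(e^{-inx}\,\cdot\,)$ yields $\|\eta-\hat{\psi}\|<\varepsilon$, where $\hat{\psi}=\sum_{l:\,\sum_k j_k^{(l)}=n}c_l\hat{\Phi}^{(l)}$. Each surviving $\hat{\Phi}^{(l)}$ vanishes on its associated union $U^{(l)}\supset S_n$, so $\hat{\psi}$ vanishes on $S_n$; letting $\varepsilon\to0$ gives $\cZ(\eta)\supset S_n$, as required.

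The main obstacle will be the bookkeeping in the reverse inclusion: one has to carefully justify the alternation-and-consolidation step so that, after extracting the $n$-th generalized Fourier coefficient, the result is a genuine finite sum of the basic alternating products appearing in the formula, each of them tied to a specific admissible tuple with a zero set containing $S_n$. Once this is in place, the rest is essentially repeated use of Lemma \ref{lem:alg_opp}(\ref{itm:c3}) together with the contractivity of $\CE$.
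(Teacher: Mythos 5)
Your treatment of the first identity $\Psi\circ\Phi(\fc_1\overline\vee\fc_2)=\Psi(\cA_1\vee\cA_2)$ matches the paper's. The genuine divergence is in the explicit formula. The paper's strategy is to declare $\fd$ to be the right-hand side, show the sandwich $\fc_1\overline\vee\fc_2\preceq\fd\preceq\Psi\Phi(\fc_1\overline\vee\fc_2)$ by exhibiting the one-factor and multi-factor terms as zero sets of actual generalized trigonometric polynomials in $\cA_1\vee\cA_2$, and then verify that $\fd$ satisfies $\cone$ and $\ctwo$ by a purely set-theoretic manipulation of the terms in the intersection; the Galois machinery of Theorem \ref{thm:main} then forces $\fd$ to coincide with the closed join, with no norm estimates needed on the hard side. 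You instead compute $\set{\Psi(\cA_1\vee\cA_2)}(n)$ head-on. Your easy inclusion is exactly the paper's second sandwich step, but you handle the reverse inclusion by an approximation argument: approximate $e^{inx}\eta$ by a finite sum of products of generators from the dense subalgebras $\cA_i'$ (note this is an approximation, not an exact expansion, since a general element of $\cA_i$ is not a finite sum of generators), consolidate consecutive same-algebra factors into one generator — legitimate because $\ctwo$ for $\fc_i$ places the product in $\fc_i(j_k+j_{k+1})$ — apply the contraction $a\mapsto\CE(e^{-inx}a)$ to extract the $n$-th generalized Fourier coefficient, and observe that each surviving alternating product vanishes on the corresponding term of the intersection, hence on all of $S_n$, so $|\eta|<\varepsilon$ on $S_n$. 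This is sound, and the bookkeeping you flag as the main obstacle does go through: one must just remember that the zero set of a consolidated factor $\hat\chi_k$ may be strictly larger than $\set{\fc}_i(j_k')$, but it still contains $\set{\fc}_i(j_k')$, which is all that the $S_n$-containment requires. The paper's route trades your $\varepsilon$-argument for a finite combinatorial check of $\cone$ and $\ctwo$, which has the side benefit of independently confirming that the formula defines a legitimate closed ideal function; your route is more explicit but re-derives, by hand, what the Galois connection already delivers once closure is established.
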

\begin{proof}
For this let $\fc_1,\fc_2 \in \fC$ be two closed ideal functions with corresponding intermediate algebras $\cA_i = \Phi(\fc_i)$. Since $\fc_1 \overline{\vee} \fc_2 > \fc_i, \forall i \in \{1,2\}$ we have 
$\Phi(\fc_1 \overline{\vee} \fc_2) > \cA_i, \forall i \in \{1,2\}$. Hence $\Phi(\fc_1 \overline{\vee} \fc_2) > \cA_1 \vee \cA_2$ so that $\Psi \circ \Phi(\fc_1 \overline{\vee} \fc_2) > \Psi(\cA_1 \vee \cA_2)$. Conversely for any $n \in \Z$ let $\phi_i \in C(S^1)$ be a non-negative function with $\cZ \phi_i = \{t \in S^1 \ | \ \phi_i(t) = 0\} =  \set{\fc}_i(n)$. 
By definition of the algebra $\cA_i = \Phi(\fc_i)$ we have $e^{inx} \hat{\phi}_i \in \cA_i$. Hence $e^{inx} (\hat{\phi}_1 + \hat{\phi}_2) \in \cA_1 \vee \cA_2$. But $\cZ (\phi_1 + \phi_2) = \set{\fc}_1(n) \cap \set{\fc}_2(n) = (\set{\fc}_1 \overline{\vee} \set{\fc}_2)(n)$. 
Thus, $\Phi(\fc_1 \overline{\vee} \fc_2) < \cA_1 \vee \cA_2$ and applying $\Psi$ we obtain $\Psi \circ \Phi(\fc_1 \overline{\vee} \fc_2) < \Psi(\cA_1 \vee \cA_2)$. These two estimates yield the first equality of our proposition. 

Now let $\set{\fd}(n)$ be the ideal function defined by the right-hand side formula claimed in Proposition \ref{prop:closed_join}. That is
\begin{small}
\begin{align} \label{eqn:def_d}
& \set{\fd}(n) = \bigcap Q^{(n,s)}_{\epsilon}(j_1,j_2, \ldots, j_s), \qquad {\text{where}} \\
\nonumber & Q^{(n,s)}_{\epsilon}(j_1,j_2, \ldots, j_s):= \\
\nonumber & \qquad  \tau^{-\sum_{k=2}^{s} j_k} \set{\fc}_{\epsilon}(j_1)  \cup \left(\tau^{-\sum_{k=3}^{s} j_k} \set{\fc}_{\epsilon+1}(j_2)\right) \cup \ldots \cup \left(\tau^{-j_s} \set{\fc}_{\epsilon+s-2}(j_{s-1})\right) \cup \set{\fc}_{\epsilon+s-1}(j_s)
\end{align}
\end{small}
We claim that 
$$\set{\fc}_1 \overline{\vee} \set{\fc}_2 \supset \set{\fd} \supset \set{\fc}_1 \vee \set{\fc}_2.$$ 
Indeed, clearly $Q^{(n,s)}_{\epsilon}(n,0,\ldots,0) = \set{\fc}_{\epsilon}(n)$ for every $n \in \Z$ and $\epsilon \in \{1,2\}$. Since by definition $\left(\set{\fc}_1 \overline{\vee} \set{\fc}_2\right)(n) = \set{\fc}_1(n) \cap \set{\fc}_2(n)$ the first inclusion follows.

To establish the second inclusion let us fix $s,\epsilon$ and numbers $j_k$ such that $\sum_{k=1}^{s} j_k = n$. Consider the product 
\begin{align*}
& (e^{i j_1 x} \hat{\phi}_1) \cdot (e^{i j_2 x} \hat{\phi}_2) \ldots (e^{i j_s x} \hat{\phi}_s) = \\
& e^{inx} \left[(\phi_1 \circ \tau^{j_2+j_3+\ldots+j_s}) (\phi_2 \circ \tau^{j_3+\ldots + j_s}) \cdot \ldots \cdot (\phi_{s-1} \circ \tau^{j_s}) \phi_s \right]^{\wedge} =: e^{inx} \hat{\phi} \in \cA_1 \vee \cA_2.
\end{align*}
Here, the $\wedge$ sign indicates that we have taken the Fourier transform on the function inside the square brackets. We name this function $\phi$. The equality is obtained by an iterated application of the multiplication formula in Lemma \ref{lem:alg_opp}(\ref{itm:c3}). 

If the functions $\phi_k(t) \in C(S^1)$ are chosen to be non-negative with $\cZ(\phi_k) = \set{\fc}_{\epsilon + k -1}(j_k)$, 
this element is inside $\cA_1 \vee \cA_2$. 
Since
$\cZ(\phi) = Q^{s}_{\epsilon}(j_1,j_2,\ldots, j_s)$, we obtain $Q^{s}_{\epsilon}(j_1,\ldots,j_s) \supset (\set{\fc}_1 \vee \set{\fc}_2)(n)$. Taking finite sums over various possible choices of $s,\epsilon,_1,\ldots, j_s$ and then passing to the limit yields the second desired containment. 
It follows from this that 
$$\Phi (\set{\fc}_1 \overline{\vee} \set{\fc}_2) = \Phi(\set{\fd}) = \Phi(\set{\fc}_1 \vee \set{\fc}_2).$$ Thus it would be enough to show that $\fd$ is closed in order to show the desired equality $\set{\fd} = \set{\fc}_1 \vee \set{\fc}_2$.

Consider first $\cone$.  
We see that
\begin{align*}
&\tau^n Q^{(n,s)}_{\epsilon}(j_1,j_2, \ldots, j_s) =  \tau^{j_1} \set{\fc}_{\epsilon}(j_1) \cup \ldots \cup \left(\tau^{\sum_{k=1}^{s-1} j_k} \set{\fc}_{\epsilon+s-1}(j_{s-1})\right) \cup \tau^{n} \set{\fc}_{\epsilon+s}(j_s)\\
& =  \set{\fc}_{\epsilon}(-j_1) \cup \ldots \cup \left(\tau^{\sum_{k=1}^{s-2} j_k} \set{\fc}_{\epsilon+s-1}(-j_{s-1})\right) \cup \left(\tau^{\sum_{k=1}^{s-1} j_k} \set{\fc}_{\epsilon+s}(-j_s) \right) \\
& = Q^{(-n,s)}_{\epsilon+s}(-j_s,-j_{s-1},\ldots,-j_1)
\end{align*}
Plugging this into the definition of $\set{\fd}$ in Equation (\ref{eqn:def_d}) we conclude that $\set{\fd}(-n) = \tau^{n} \set{\fd}(n)$ which is exactly $\cone$. 

Turning to $\ctwo$, consider  $Q^{(m+n,s)}_{\epsilon}(j_1, \ldots, j_r,j_{r+1},\ldots, j_s)$ where $\sum_{k=1}^r j_k = n$, $\sum_{k=r+1}^s j_k = m$. Then we have 
\begin{align*}
&\set{\fd}(m+n)\\&\subset Q^{(m+n,s)}_{\epsilon}(j_1,j_2, \ldots, j_s)  \\
& =\tau^{-m} \left[\tau^{-\sum_{k=2}^{r} j_k} \set{\fc}_{\epsilon}(j_1)  \cup \ldots \cup  \set{\fc}_{\epsilon+r}(j_r)\right] \cup \left[ \tau^{-\sum_{k=r+2}^{s} j_k} \set{\fc}_{\epsilon+r+1}(j_{r+1}) \cup \ldots \cup \set{\fc}_{\epsilon+s}(j_s) \right]  \\
& = \tau^{-m} Q^{(n,r)}_{\epsilon}(j_1,\ldots,j_r) \cup Q^{(m,s-r)}_{\epsilon+r+1}(j_{r+1},\ldots, j_s)
\end{align*}
Applying the same calculation to $Q^{(m+n,s+1)}_{\epsilon}(j_1, \ldots, j_r,0,j_{r+1},\ldots, j_s)$ we obtain an inclusion 
$\set{\fd}(m+n) \subset \tau^{-m} Q^{(n,r)}_{\epsilon}(j_1,\ldots,j_r) \cup Q^{(m,s-r)}_{\epsilon+r+1}(j_{r+1},\ldots, j_s)$ and taking the intersection over these two we have now
$$\set{\fd}(m+n) \subset \tau^{-m} Q^{(n,r)}_{\epsilon}(j_1,\ldots,j_r) \cup Q^{(m,s-r)}_{\epsilon'}(j_{r+1},\ldots, j_s).$$
Now the parameters $\epsilon, r, j_1,\ldots, j_r$ appearing in the first part of this expression and $\epsilon', s-r,j_{r+1},\ldots, j_s$ appearing in the second part are completely independent of each other. Taking the intersection over all of these yields the desired result:
$$\set{\fd}(m+n) \subset \tau^{-m} \set{\fd}(n) \cup \set{\fd}(m)$$
which is exactly $\ctwo$. This completes our proof. 
\end{proof}

\subsection{Residual algebras}\label{sub:residual}

The special class of residual intermediate subalgebras was singled out in the introduction as algebras admitting especially nice properties. Recall that an ideal function $\fc \in \fC$ (resp. a $\fc$-ideal function $\fj$) are called {\it{residual}} if $\set{\fc}(n)$ (resp. $\set{\fj}(n)$) has an empty interior for every $n$ in the support. We also refer to the corresponding intermediate algebra (resp ideal) as residual. We denote by $\fC^{r}$ (resp. $\fI_{\fc}^{r}$) the sub-lattices of residual objects.

\begin{note}
The notation could be confusing. Residual algebras are large, but we are talking about small sets at the level of closed subsets of the circle. In fact, $\fc$ is residual exactly when the complement $S^1 \setminus \set{\fc}(n)$ is residual (in the standard Baire category sense of the word), for every $n \in \Supp(\fc)$.
\end{note}

\begin{proof}[Proof of \thref{cor:residual}]
It is clear that the class of residual algebras $\fC^{r}$ is closed under arbitrary joins. The fact that it is closed under finite intersections follows from the (finite) Baire category theorem. This establishes Properties (\ref{itm:vee}) and (\ref{itm:wedge}). 

Any ideal function's support $\Supp(\fc)$ is symmetric by property $\cone$. In the residual case, it is closed under addition by $\ctwo$ because the union of two closed nowhere dense sets is still nowhere dense. This establishes Property (\ref{itm:supp_gp}). 

Now let $J \lhd \cA \in \fA$ be a nontrivial closed two-sided ideal with associated $\fc$-ideal function $\fj$ and set $M := \set{\fj}(0) \subset S^1$. When we say that $J$ is nontrivial, we mean that $(0) \lneqq J \lneqq \cA$, which means that $\emptyset \subsetneq M \subsetneq S^1$, as in Proposition (\ref{prop:prop}).

As $\set{\fc}(n) \subset \set{\fj}(n)$ for every $n \in \mathbb{Z}$, it is clear that $\Supp(\fj) \subset \Supp(\fc)$. The reverse inclusion also holds. Indeed by $\ctwoR{\fc}$ we have, for $k \in \Supp(\fc)$, that  $\set{\fj}(k) = \set{\fj}(k+0) \subset  \set\fc(k) \cup \tau^{-k}\set\fj(0) \subsetneq S^1$, where the proper containment 
follows since  $\set{\fj}(0)$ is a proper closed subset of $S^1$ and $\set\fc(k)$ is closed with an empty interior. Thus $\Supp(\fc) = \Supp(\fj)$, establishing Property (\ref{itm:supp_ideal}).  
 
To prove Property (\ref{itm:residual_ideal}) we first argue that $M = \set{\fj}(0)$ is nowhere dense. By (\ref{itm:supp_gp}) and (\ref{itm:supp_ideal}) $\Supp(\fc) = \Supp(\fj) = r \Z$ for some $r \geq 1$ is a subgroup of $\mathbb{Z}$.  Recall that by Proposition (\ref{prop:prop}) $M \subset \set{\fj}(k), \forall k \in \Z$. Now applying $\ctwoR{\fc}$ we have for every $k,n \in \Supp(\fc)$
 \begin{eqnarray} \label{eqn:M}
\nonumber     M & \subset & \set{\fj}(k) = \set{\fj}(n + k - n) \subset \tau^{-n-k} \set{\fj}(-n) \cup \set{\fc}(n+k) = \tau^{-k} \set{\fj}(n) \cup \set{\fc}(n+k) \\ 
     & \subset & \tau^{-k} \set{\fj}(n) \cup \tau^{-k} \set{\fc}(n) \cup \set{\fc}(k) = \tau^{-k} \set{\fj}(n) \cup \set{\fc}(k),
 \end{eqnarray}
 where the last equality uses, again, the fact that $\set{\fj}(n) \supset \set{\fc}(n)$. 

Taking intersection over all $n \in \Supp(\fc)$ and using the fact that $\bigcap_{n \in \Z} \set{\fj}(n) = M$, by Proposition \ref{prop:prop}, we obtain 
    \begin{equation} \label{eqn:M2}
    M \subset \set{\fj}(k) \subset \tau^{-k} \left( \bigcap_{n \in \Z} \set{\fj}(n) \right) \cup \set{\fc}(k) = \tau^{-k} M \cup \set{\fc}(k)
    \end{equation}

Let us denote by 
$\Omega = \bigcup_{m,n \in \Supp(\fc)} \tau^{m} \set{\fc}(n)$. 
This is a meager $\tau^r$-invariant subset of $S^1$. Now, using the last step, we have 
$$M \setminus \Omega \subset \tau^{-r}\left( M \setminus \Omega\right).$$ 
Namely $M \setminus \Omega$ is $\tau^r$-invariant. It cannot be dense because $M$ is a proper closed subset of $S^1$. Hence, it has to be empty by the minimality of $\tau^r$. Thus $M \subset \Omega$, particularly $M$, is meager. 

Now Equation (\ref{eqn:M2}) implies that $\set{\fj}(k) \subset \tau^{-k}M \cup \set{\fc}(k) \subset  \Omega$, for every $k \in \Supp(\fj) = \Supp(\fc)$. In particular, $\set{\fj}(k)$ is closed and nowhere dense for every $k \in \Supp(\fj)$. This completes the proof of Property (\ref{itm:residual_ideal}). 

Finally, towards a proof of Property (\ref{itm:center_free}), suppose that $a$ is in the center of $\mathcal{A}$. We first show that $a \in \CZ$.
For every $n$ we have $a \lambda_n= \lambda_1(a\lambda_n)\lambda_1^*$, hence
$$
\mathbb{E}(a\lambda_n) = \mathbb{E}(\lambda_1(a\lambda_n)\lambda_1^*)
= \lambda_1\mathbb{E}(a\lambda_n) \lambda_1^* = 
\mathbb{E}(a\lambda_n) \circ T.
$$
By ergodicity, each Fourier coefficient $\mathbb{E}(a\lambda_n)$ is a constant, and we conclude that $a \in \CZ$.
Thus $a = \hat{\xi}$ for some $\xi \in C(S^1)$.

Let $n \in \Supp(\fc)$ be such that $\set{\fc}(n)$ is nowhere dense and let $\phi \in C(S^1)$ be such that $\hat{\phi}$ is a generator of the ideal $\fc(n)$. Or in other words $\mathcal{Z}\phi = \{t \in S^1 \ | \ \phi(t) = 0 \} = \set{\fc}(n)$. 
We then have, using Lemma \ref{lem:alg_opp}(\ref{itm:c1})
$$
e^{inx}\widehat{\phi \xi}= e^{inx}\hat{\phi} \hat{\xi} = \hat{\xi} e^{inx}\hat{\phi}
= e^{inx} \widehat{\left[(\xi \circ \tau^n) \phi \right]},
$$
so on the open dense set $S^1 \setminus \set{\fc}(n)$
we get $\xi \circ \tau^n = \xi$. Using the ergodicity 
of $\tau^n$, we conclude that $\xi$ is a constant. This establishes Property (\ref{itm:center_free}) and completes the proof of Corollary (\thref{cor:residual}). 
\end{proof}
\begin{note}
According to Property (\ref{itm:wedge}) in the above proof, the intersection of any finite collection of residual subalgebras is still residual. This does not generalize to countable intersections, though. Consider, for example, the basic algebras 
$\mathcal{A}_i:=\cA_{1,\{p_i\}}$ with $\{p_i \ | \ i \in \N\} \subset S^1$ a dense countable set. Finite intersections of these algebras are residual, and in fact, they are still basic:
$$\Psi \left(\bigcap_{i=1}^n \mathcal{A}_i\right) = \fb_{1,\{p_1,p_2,\ldots,p_n\}}.$$
But clearly 
$$\bigcap_{i \in \N} \mathcal{A}_i = C(X).$$
\end{note}

\subsection{Small algebras}\label{sub:small}
This section is dedicated to studying small subalgebras, whose basic properties are summarized in \thref{cor:small}. 

\begin{proof}[Proof of \thref{cor:small}]
Let $\cA \in \cA$ and $\fc = \Psi(\cA)$. By our main structure theorem, we know that, as a $\CZ$ module 
$$\cA = \overline{\oplus_{n \in \Supp(\fc)} \fc(n)}.$$
Thus, it is clear that $\Supp(\fc)$ is finite if and only if the algebra is finite-dimensional as a module over $\CZ$. These are the small subalgebras and (\ref{itm:small_fd}) directly follows. 

Since for residual subalgebras $\Supp(\fc)$ is a (nontrivial) subgroup of $\Z$, residual subalgebras can never be small. Now let $\fb_{q, P}$ be a basic ideal function and $\cA_{q, P}$ the associated algebra (see Example \ref{eg:prim} and Subsection \ref{sec:basic}). If $P$ has an empty interior, then $\cA_{q, P}$ is residual and hence not small. Conversely, if $P$ has a nonempty interior, by the minimality of $\tau: S^1 \rightarrow S^1$, there is some $N$ such that $\set{\fb}_{q,P}(nq) = P \cup \tau^{-q}P \cup \ldots \cup \tau^{q(1-n)}P = S^1$ for every $n \ge N$. By the explicit formula for the basic ideal function in Example \ref{eg:prim}, this immediately implies that $\Supp(\fb_{q, P})$ is finite and hence $\cA_{q, P}$ is small. This proves (\ref{itm:basic_dichotomy})

If $P=\set{\fc}(q)$ has an empty interior for some $q \ne 0$, then $\cA$ contains the basic residual algebra $\cA_{q, P}$ as a subalgebra. Hence, $\cA$ itself must be residual, contrary to the assumption that it is small. This proves (\ref{itm:nonempty_int}). 
The following examples will demonstrate parts (\ref{itm:small_center}) and (\ref{itm:join_not_small}).
\end{proof}

\begin{example}\label{exa:center}
Let $J = S^1 \setminus P = (-\pi/10, \pi/10)$ and suppose that
$ \pi/10 < \alpha < 2\pi/10$. Then any function $\phi \in C(S^1)$
such that $\phi(t) = \phi(t+ \alpha)$ for $t \in J$
and $\phi(t) = \phi(t - \alpha)$ for $t \in \tau J$,
is a central element of the algebra $\mathcal{A}_{1,P}$.
\end{example}

\begin{proof}
Clearly $P \cup \tau P = S^1$ so that every element $a$ of 
$\mathcal{A}_{1,P}$ has the form
$$
a = e^{-ix}\hat\psi_{-1} + \hat\psi_0 + e^{ix}\hat\psi_1,
$$
with $\psi_{-1}\upharpoonright_{\tau P} = 0 =
\psi_1\upharpoonright_{P}$.
In order for an element $\hat\phi \in C^*(\mathbb{Z})$ to be central in 
$\mathcal{A}_{1,P}$ it should satisfy the equation
$\hat\phi a = a\hat\phi$ for every $a \in \mathcal{A}_{1,P}$.
Writing this equation more explicitly, we get
$$
e^{-ix}(\hat\phi \circ \tau^{-1})\hat\psi_{-1} + \hat\phi \hat\psi_0 + e^{ix}
(\hat\phi \circ \tau)\hat\psi_1  =
e^{-ix}\hat\psi_{-1}\hat\phi + \hat\psi_0\hat\phi + e^{ix}\hat\psi_1 \hat\phi.
$$
From this we deduce that on $J$ we have $\phi = \phi \circ \tau^{1}$
and on $\tau J$ we have $\phi = \phi \circ \tau^{-1}$.
By our assumption the sets $J$,  $\tau^{-1}J$, 
$\tau J$ and $\tau^2J$ are pairwise disjoint, and it follows that any $\phi$ which satisfies these conditions 
will be a central element of $\mathcal{A}_{1, P}$.
\end{proof}

The intersection of two residual algebras is residual. In contrast, the following example shows that the join of two small algebras need not be small. 
\begin{example}
Consider two subsets $P$,$Q\subset  S^1$ with non-empty interior such that $P\cap Q=\emptyset$. Let  $\fa = \fb_{1,P}, \fb=\fb_{1,Q}$ be the corresponding ideal functions. It is easy to see that $(\set{\fa}\vee\set{\fb})(1)=\emptyset$ as it is contained in both $\set{\fa}(1)=P$ and $\set{\fb}(1)=Q$. In other words, the join of the corresponding algebra contains $e^{ix}$ and $C_r^*(\mathbb{Z})$, and is thus the entire crossed product $C(S^1)\rtimes_r\mathbb{Z}$.
\end{example}
\subsection{About the simplicity of intermediate algebras. }\label{sub:simple}

It is well known that the algebras 
$\mathcal{C}^q =C(X_q) \rtimes_r \mathbb{Z}$ are simple (each $(X,R_{q\alpha})$ being minimal).
We would like to determine which of the intermediate algebras are simple. 
To approach this question, consider a general intermediate algebra $\mathcal{A}_{\fc}$.
The collection of closed sets $\{\set{\fc}(n) : |n| \geq 1\}$
either has the finite intersection property with 
$$
Q = Q(\fc) := \bigcap_{|n| \geq 1} \set{\fc}(n)  \not = \emptyset,
$$
or it does not. In the latter case
there is an $N \geq 1$ such that $\bigcap_{1 \leq |n| \leq N} \set{\fc}(n) = \emptyset$.

\begin{prop}\label{prop:not-simple}
Let $\fc$ be an ideal function such that
$Q = \bigcap_{|n| \geq 1} \set{\fc}(n)  \not = \emptyset$,
then the algebra $\mathcal{A}_{\fc}$ is not simple.
In particular, for the algebra $\mathcal{A}= \mathcal{A}_{\fb_{q, P}}$ corresponding to the basic ideal functions $\fb_{q, P}$,
we have $Q = P \cap \tau^q P$ so that $\mathcal{A}$ is not simple whenever this set is not empty.
\end{prop}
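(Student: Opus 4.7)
The plan is to construct a non-trivial closed two-sided ideal of $\mathcal{A}_{\fc}$ by exhibiting a non-trivial closed $\fc$-ideal function $\fj$ and then invoking Theorem \ref{thm:main_ideal} together with Proposition \ref{prop:prop}. The point $p \in Q$ will serve as the single ``seed'' for this ideal function.

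Concretely, pick any $p \in Q$ and define
\[
\set{\fj}(0) = \{p\}, \qquad \set{\fj}(n) = \set{\fc}(n) \text{ for all } n \neq 0.
\]
Clearly $\set{\fj}(n) \supset \set{\fc}(n)$ for every $n \in \mathbb{Z}$ (using $\set{\fc}(0) = \emptyset$), so $\fj$ is a $\fc$-ideal function. The condition $\cone(\fc)$ is immediate: for $n \neq 0$ it follows from $\cone$ for $\fc$ itself, and for $n = 0$ it is trivial. For $\ctwoR{\fc}$, let $m \in \Supp(\fc)$ and $n \in \Supp(\fj) = \Supp(\fc) \cup \{0\}$. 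When $n \ne 0$ and $m + n \ne 0$ the condition reduces to $\ctwo$ for $\fc$. When $n = 0$ we have $\set{\fj}(m) = \set{\fc}(m) \subset \tau^{-m}\{p\} \cup \set{\fc}(m)$, which is automatic. The only case that uses $p \in Q$ is $m + n = 0$ with $n \neq 0$: then we need
\[
\{p\} = \set{\fj}(0) \subset \tau^{-m}\set{\fc}(-m) \cup \set{\fc}(m) = \set{\fc}(m),
\]
which holds since $m \in \Supp(\fc)\setminus\{0\}$ and $p \in Q = \bigcap_{|k|\geq 1}\set{\fc}(k)$. Thus $\fj$ is a closed $\fc$-ideal function.

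By Theorem \ref{thm:main_ideal}, $J := \Phi_{\fc}(\fj)$ is a closed two-sided ideal of $\mathcal{A}_{\fc}$ with $\Psi_{\mathcal{A}_{\fc}}(J) = \fj$. Since $\emptyset \subsetneq \set{\fj}(0) = \{p\} \subsetneq S^1$, Proposition \ref{prop:prop} yields $(0) \subsetneq J \subsetneq \mathcal{A}_{\fc}$, so $\mathcal{A}_{\fc}$ is not simple.

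For the special case $\fc = \fb_{q, P}$, it remains to verify that $Q = P \cap \tau^q P$. Using the explicit formula in Example \ref{eg:prim}, only multiples of $q$ contribute (since $\set{\fb}_{q,P}(m) = S^1$ for $q \nmid m$), and among those the smallest sets are $\set{\fb}_{q,P}(q) = P$ and $\set{\fb}_{q,P}(-q) = \tau^q P$; all other non-trivial $\set{\fb}_{q,P}(nq)$ are unions containing $P$ or $\tau^q P$, hence no smaller in intersection. Therefore $Q = P \cap \tau^q P$, and the first part of the proposition gives the desired non-simplicity whenever this intersection is non-empty. The only real obstacle is the verification of $\ctwoR{\fc}$ in the case $m+n=0$, which is exactly where the non-emptiness of $Q$ is needed; everything else is bookkeeping from $\fc$ being closed.
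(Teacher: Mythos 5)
Your proof is correct, and it takes a genuinely different route from the paper. The paper's proof directly defines the candidate ideal as
$$I = \{a \in \mathcal{A}_{\fc} : \overbracket{a}(0)\!\upharpoonright_Q = 0\}$$
and then verifies by a hands-on computation (Fej\'{e}r approximations plus the conditional expectation $\E_\mu$) that $I$ is a proper two-sided ideal, noting only afterward, in Remark \ref{rem:specialI}, that this $I$ corresponds to the closed $\fc$-ideal function with $\set{\fj}(0)=Q$ and $\set{\fj}(n)=\set{\fc}(n)$ otherwise, and that shrinking $Q$ to a single point also works. You instead go straight to the classification machinery: you write down the $\fc$-ideal function $\fj$ directly with $\set{\fj}(0)=\{p\}$, verify the closure axioms $\cone(\fc)$ and $\ctwoR{\fc}$ (the only nontrivial case being $m+n=0$, where $\cone$ for $\fc$ reduces the requirement to $p\in\set{\fc}(m)$, which is exactly $p\in Q$), and then invoke Theorem \ref{thm:main_ideal} and Proposition \ref{prop:prop} to get a nontrivial ideal. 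Your route is shorter and more conceptual, leveraging the already-developed ideal-classification theorem; the paper's direct computation is self-contained and makes the structure of the ideal more transparent, which they then reuse (via Equation \eqref{is_ideal}) to deduce that any closed $Q_1\subset Q$ works. The verification of $Q=P\cap\tau^q P$ for the basic ideal function is handled the same way in both. One small cosmetic point: since $0\in\Supp(\fc)$ always (as $\set{\fc}(0)=\emptyset$), writing $\Supp(\fj)=\Supp(\fc)\cup\{0\}$ is redundant, though harmless.
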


\begin{proof}

{\bf {$Q \not = \emptyset \Rightarrow \mathcal{A} = \mathcal{A}_{\fc}$ is not simple}}.

Let 
\begin{equation}\label{I}
I = \{a \in \mathcal{A} : \overbracket{a}(0) \upharpoonright_{Q} =0\}.
\end{equation}
It is  easy to check that this is a proper ideal:
 Given $d \in I$ and $a \in  \mathcal{A}$
we need to show that both $\overbracket{ad}(0)= \mathbb{E}_{\mu}(ad)$ and
$\overbracket{da}(0)=\mathbb{E}_\mu(da)$ 
correspond to functions that vanish on $Q$.
Let $a' = \sum_{|n| \leq N} e^{inx}\hat{\phi}_n$ 
and $d' = \sum_{|k|\leq N} e^{ikx}\hat{\psi}_k $
be good Fej\'{e}r approximations to 
$a$ and $d$ respectively.
Applying the conditional expectation, we get the following:
\begin{align} \label{is_ideal}
\nonumber \mathbb{E}_\mu(ad) &  \approx \
\mathbb{E}_\mu\left( (\sum_{|n|\leq N}e^{inx}\hat{\phi}_n)(\sum_{|k|\leq N} e^{ikx}\hat{\psi}_k)\right)\\
& = \sum_{0 <|n|\leq N}(\widehat{\phi_n \circ \tau^{-n}})\hat{\psi}_{-n}
 + \widehat{\phi_0 \cdot \psi_0}.
\end{align}
Now the first summoned corresponds to a function on $C(S^1)$ which vanishes on $Q$,
as,  for $n  \not= 0$, we have that $\psi_{-n}$ vanishes on $\set{\fc}(-n)$, 
$\phi_n \circ \tau^{-n}$ also vanishes on $\tau^{n} \set{\fc}(n) = 
\set{\fc}(-n)$, and $Q \subset \set{\fc}(-n)$. 
The second summoned corresponds to the function $\phi_0\psi_0$
which belongs to $I$.
Since $I$ is closed, we conclude that indeed $ad \in I$.
A similar computation shows that also $da \in I$.

For the last assertion, we note that for
$\fb_{q,P}$ we have $Q = P \cap \tau^q P$.
\end{proof}

Is the condition $Q(\fc) \not = \emptyset$ also a necessary
condition for $\mathcal{A}_{\fc}$ to be simple ?
We only have a partial answer to this question in Proposition \ref{prop:necessary} below.

\begin{lemma}
If $I < \mathcal{A}_{\fc}$ is a proper ideal, then $\mathcal{B} = \overline{C^*(\mathbb{Z}) + I}$
is a $C^*$-subalgebra: $C^*(\mathbb{Z}) < \mathcal{B} < \mathcal{A}_{\fc}$,
and $I $ is a proper ideal in $\mathcal{B}$.
\end{lemma}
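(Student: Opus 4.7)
My plan is to verify the three claims in the lemma essentially as routine bookkeeping, taking advantage of the fact that $I$ is automatically $\ast$-invariant (being a closed two-sided ideal in the $C^*$-algebra $\mathcal{A}_{\fc}$, as noted in the introduction) and that the unit $1 = \lambda_0$ of $\mathcal{C}$ lies in $C^*_r(\mathbb{Z})$. First I would check that the (un-closed) sum $C^*_r(\mathbb{Z}) + I$ is a $\ast$-subalgebra of $\mathcal{A}_{\fc}$. For elements $a_i + j_i$ with $a_i \in C^*_r(\mathbb{Z})$ and $j_i \in I$, expanding
\[
(a_1 + j_1)(a_2 + j_2) = a_1 a_2 + (a_1 j_2 + j_1 a_2 + j_1 j_2)
\]
shows this is in $C^*_r(\mathbb{Z}) + I$, since $a_1 a_2 \in C^*_r(\mathbb{Z})$ and the three remaining terms all lie in $I$ by the ideal property of $I$ in $\mathcal{A}_{\fc}$ (noting $C^*_r(\mathbb{Z}) \subset \mathcal{A}_{\fc}$). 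Closure under the adjoint is immediate from $(a+j)^* = a^* + j^*$ together with the $\ast$-invariance of both $C^*_r(\mathbb{Z})$ and $I$. Taking norm closure then preserves the $\ast$-algebra structure, so $\mathcal{B}$ is a $C^*$-subalgebra of $\mathcal{A}_{\fc}$.

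The inclusions $C^*_r(\mathbb{Z}) \subset \mathcal{B} \subset \mathcal{A}_{\fc}$ are then immediate: the first because $C^*_r(\mathbb{Z}) \subset C^*_r(\mathbb{Z}) + I \subset \mathcal{B}$, and the second because $C^*_r(\mathbb{Z}) + I$ already sits inside the closed algebra $\mathcal{A}_{\fc}$, so its closure does too.

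For the last assertion, I would first observe that $I$ remains a two-sided ideal when viewed inside $\mathcal{B}$, because $\mathcal{B} \subset \mathcal{A}_{\fc}$ and $I$ is already absorbing under multiplication by elements of $\mathcal{A}_{\fc}$. To see that this ideal is proper in $\mathcal{B}$, suppose towards a contradiction that $I = \mathcal{B}$. Then $C^*_r(\mathbb{Z}) \subset \mathcal{B} = I$; in particular the unit $1 = \lambda_0$ would belong to $I$, forcing $I = \mathcal{A}_{\fc}$ and contradicting the hypothesis that $I$ is a proper ideal of $\mathcal{A}_{\fc}$.

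The only real subtlety in this lemma is conceptual rather than computational: one must keep track of which ambient algebra is doing the absorbing at each step, and remember that ideals here are closed two-sided, hence $\ast$-invariant. Everything else is a one-line verification, so I do not anticipate any genuine obstacle.
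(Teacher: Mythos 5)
Your proof is correct, and it is exactly the routine verification the paper has in mind: the authors' entire proof of this lemma is "Easy to check." You have spelled out all the relevant points — closure of $C^*_r(\mathbb{Z})+I$ under products and adjoints via the ideal property and $\ast$-invariance, that passing to the norm closure preserves the $\ast$-algebra structure, the inclusions, and properness via $1=\lambda_0\in C^*_r(\mathbb{Z})\setminus I$ — and none of them present any gap.
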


\begin{proof}
Easy to check.
\end{proof}

\begin{definition}
\thlabel{genrideal}
We say that an ideal $I < \mathcal{A}_{\fc}$ is {\it generating} when $\mathcal{A}_{\fc}
= \overline{C^*(\mathbb{Z}) + I}$.
\end{definition}

See also \cite[Corollary 1.5.8]{Pedersen} for the following lemma.

\begin{lemma}\label{lem:noclosure}
If $I$ is a generating ideal for $\mathcal{A}_{\fc}$ then we actually have:
$$
\mathcal{A}_{\fc}
=C^*(\mathbb{Z}) + I.
$$
\end{lemma}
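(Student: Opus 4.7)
The plan is to use the general $C^*$-algebra fact that the image of a $C^*$-algebra under a $*$-homomorphism is automatically closed (see for example Pedersen \cite[1.5.8]{Pedersen}). The argument is short and conceptual, not computational.

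First I would form the quotient $\mathcal{A}_{\fc}/I$, which is a $C^*$-algebra because $I$ is a closed two-sided ideal, and consider the canonical surjective $*$-homomorphism $\pi: \mathcal{A}_{\fc} \to \mathcal{A}_{\fc}/I$. Restricting $\pi$ to the $C^*$-subalgebra $C^*(\mathbb{Z}) \subset \mathcal{A}_{\fc}$ gives a $*$-homomorphism $\pi\upharpoonright_{C^*(\mathbb{Z})}: C^*(\mathbb{Z}) \to \mathcal{A}_{\fc}/I$. By the standard $C^*$-fact that the image of a $*$-homomorphism between $C^*$-algebras is norm-closed, the image $\pi(C^*(\mathbb{Z}))$ is a closed $*$-subalgebra of $\mathcal{A}_{\fc}/I$.

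Next I would use the generating hypothesis. Since $\mathcal{A}_{\fc} = \overline{C^*(\mathbb{Z}) + I}$ and $\pi$ is continuous with $\pi(I) = 0$, the image $\pi(C^*(\mathbb{Z}))$ is dense in $\pi(\mathcal{A}_{\fc}) = \mathcal{A}_{\fc}/I$. Combined with the closedness established above, this forces $\pi(C^*(\mathbb{Z})) = \mathcal{A}_{\fc}/I$. Translating back, for every $a \in \mathcal{A}_{\fc}$ there is $b \in C^*(\mathbb{Z})$ with $\pi(a) = \pi(b)$, i.e. $a - b \in I$, so $a \in C^*(\mathbb{Z}) + I$. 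Hence $\mathcal{A}_{\fc} = C^*(\mathbb{Z}) + I$ without any closure.

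There is essentially no obstacle once one invokes the automatic-closedness result, which rests on the standard quotient decomposition of a $*$-homomorphism $\varphi: A \to B$ into an injective $*$-homomorphism $A/\ker\varphi \to B$ (an injective $*$-homomorphism of $C^*$-algebras being isometric and hence having closed range). The only point of mild care is making sure the ambient $C^*$-algebra structure and the fact that $I$ is closed two-sided are used correctly so that $\mathcal{A}_{\fc}/I$ really is a $C^*$-algebra and $\pi$ really is a $*$-homomorphism; both are standard (see \cite[Theorem 4.1]{Davidson} or \cite{Pedersen}).
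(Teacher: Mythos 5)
Your proof is correct, but it takes a genuinely different route from the paper's. You invoke the standard abstract fact (Pedersen, Corollary 1.5.8 --- which the paper cites immediately before the lemma) that for a $C^*$-subalgebra $\cB$ of a $C^*$-algebra $\cA$ and a closed two-sided ideal $I \lhd \cA$, the sum $\cB + I$ is automatically norm-closed. Your argument --- pass to $\cA_{\fc}/I$, use automatic closedness of $\pi(C^*(\mathbb{Z}))$, get density from the generating hypothesis, conclude $\pi(C^*(\mathbb{Z})) = \cA_{\fc}/I$, pull back --- is exactly the standard proof of that general fact, and there is no gap in it.

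The paper instead gives a concrete proof internal to its own machinery. It uses that $I = I_{\fj}$ for some $\fc$-ideal function $\fj$ (Theorem \ref{thm:main_ideal}), so that the conditional expectation $\mathbb{E}_\mu : \cC \to C^*(\mathbb{Z})$ satisfies $\mathbb{E}_\mu(I) \subset I \cap C^*(\mathbb{Z})$. Given $c = \lim (a_n + b_n)$ with $a_n \in C^*(\mathbb{Z})$, $b_n \in I$, applying $\mathbb{E}_\mu$ and subtracting shows $c - \mathbb{E}_\mu(c) = \lim (b_n - \mathbb{E}_\mu(b_n))$; each term lies in $I$, and since $I$ is closed, $c - \mathbb{E}_\mu(c) \in I$, giving the explicit decomposition $c = \mathbb{E}_\mu(c) + (c - \mathbb{E}_\mu(c))$. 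Your route is shorter and works for any $C^*$-subalgebra plus closed two-sided ideal; the paper's route exhibits the decomposition directly via the conditional expectation, which is the tool deployed throughout that section and requires $\mathbb{E}_\mu(I) \subset I$ (a fact specific to ideals of this form). Either is a valid proof; yours is the more general-purpose one, and indeed it is what justifies the Pedersen citation that the paper already points to.
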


\begin{proof}
By Theorem \ref{thm:main_ideal} we have $I = I_{\fj}$
for some ideal function $\fj$.
Let $I_0 = I \cap C^*(\Z)$.
It then follows that $\mathbb{E}_\mu( I) \subset I_0$.
Let $c  \in \mathcal{A}_{\fc}$, then we have 
$$
c = \lim (a_n + b_n),
$$
with $a_n \in C^*(\mathbb{Z})$ and $b_n \in I$.
Now
$$
 \mathbb{E}_\mu(c) = \lim \ \mathbb{E}_\mu(a_n + b_n)
= \lim \ (a_n + \mathbb{E}_\mu(b_n)).
$$
It follows that $c -  \mathbb{E}_\mu(c) = \lim(b_n - \mathbb{E}_\mu(b_n))$.
As, for each $n$, both $b_n$ and $\mathbb{E}_\mu(b_n)$ are in $I$ and $I$ is closed,
we conclude that $c -  \mathbb{E}_\mu(c)  \in I$, whence $c \in C^*(\Z) + I$.
\end{proof}
\begin{cor}
Given any ideal $I\triangleleft\mathcal{A}_{\fc}$, $C_r^*(\mathbb{Z})+I=\overline{C_r^*(\mathbb{Z})+I}$.
\begin{proof} It follows from \thref{genrideal} that $I$ is a generating ideal for $\overline{C_r^*(\mathbb{Z})+I}$. Since $\overline{C_r^*(\mathbb{Z})+I}$ is an intermediate algebra of the form $\mathcal{A}_{\fd}$ for some $\fd\le \fc$, the claim follows from Lemma~\ref{lem:noclosure}.
\end{proof}
\end{cor}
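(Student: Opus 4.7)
The plan is to leverage Lemma \ref{lem:noclosure} by simply observing that any ideal is automatically \emph{generating} for the smaller intermediate subalgebra obtained from the closure. That is, the corollary for arbitrary $I \triangleleft \mathcal{A}_{\fc}$ reduces to the special case already handled.

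First, I would set $\mathcal{B} := \overline{C^{*}_{r}(\mathbb{Z}) + I}$. By the preceding lemma (the one just before Definition~\ref{genrideal}), $\mathcal{B}$ is a $C^{*}$-subalgebra with $C^{*}_{r}(\mathbb{Z}) \subset \mathcal{B} \subset \mathcal{A}_{\fc}$, and $I$ is a closed two-sided ideal of $\mathcal{B}$ (and it is proper in $\mathcal{B}$ whenever $I \neq \mathcal{A}_{\fc}$; the edge cases $I = (0)$ and $I = \mathcal{A}_{\fc}$ are trivial for the present statement). Next, by the main Theorem \ref{thm:main}, there is a (unique) closed ideal function $\fd \preceq \fc$ with $\mathcal{B} = \mathcal{A}_{\fd}$.

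Now comes the key observation: by the very definition of $\mathcal{B}$, we have
\[
\mathcal{A}_{\fd} \;=\; \mathcal{B} \;=\; \overline{C^{*}_{r}(\mathbb{Z}) + I},
\]
so $I$ is a generating ideal for $\mathcal{A}_{\fd}$ in the sense of Definition \ref{genrideal}. Applying Lemma \ref{lem:noclosure} to the pair $(\mathcal{A}_{\fd}, I)$ yields
\[
\mathcal{A}_{\fd} \;=\; C^{*}_{r}(\mathbb{Z}) + I,
\]
that is, the algebraic sum is already norm-closed. Combining this equality with the definition of $\mathcal{B}$ gives
\[
C^{*}_{r}(\mathbb{Z}) + I \;=\; \mathcal{A}_{\fd} \;=\; \overline{C^{*}_{r}(\mathbb{Z}) + I},
\]
which is exactly what we had to prove.

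There is essentially no obstacle here: the only conceptual point is recognizing that Lemma \ref{lem:noclosure} is sharp enough to handle the general case, because its conclusion does not require $I$ to have been generating for the original ambient algebra — it suffices that $I$ be generating for \emph{some} intermediate subalgebra. Theorem \ref{thm:main} then supplies that intermediate subalgebra as $\mathcal{A}_{\fd} = \Phi \circ \Psi(\overline{C^{*}_{r}(\mathbb{Z}) + I})$, and the argument closes immediately.
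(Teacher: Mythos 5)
Your proof is correct and takes essentially the same route as the paper's: set $\mathcal{B} = \overline{C_r^*(\mathbb{Z})+I}$, note (via the unnamed lemma before Definition~\ref{genrideal} and Theorem~\ref{thm:main}) that $\mathcal{B} = \mathcal{A}_{\fd}$ with $I$ an ideal of it, observe $I$ is tautologically generating for $\mathcal{A}_{\fd}$, and apply Lemma~\ref{lem:noclosure} to that pair. You simply spell out the steps that the paper's two-sentence proof leaves implicit.
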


\begin{remark}\label{rem:specialI}
We note that the ideal $I < \mathcal{A}_{\fc}$ (defined in Equation~(\ref{I}) of Proposition \ref{prop:not-simple})  is 
 of the form $I = I_{\fj}$ with $\set{\fj}(0)=Q$ and $\set{\fj}(n) = \set{\fc}(n)$ for $n \ne0$, hence it is generating. Let us denote such an ideal by $I^{\cA}_{Q}$.
If $Q_1 \subset Q$ is a smaller closed subset, then Formula~(\ref{is_ideal}) shows that $I^{\cA}_{Q_1} \lhd \cA$ as well. It is easy to check that if $Q_1$ is taken to be a point, we obtain a maximal ideal (of codimension $1$) inside $\cA$.
\end{remark}

\vspace{.3cm}

\begin{prop}
Let $\mathcal{A}_{\fc}$ be an intermediate algebra with a generating proper ideal $I < \mathcal{A}_{\fc}$;
then  $Q(\fc) = \bigcap_{|n| \geq 1}\set{\fc}(n) \not=\emptyset$.
Conversely, if $Q(\fc)  \not=\emptyset$ then 
$I = \{a \in \mathcal{A} : \overbracket{a}(0) \upharpoonright_{Q(\fc)} =0\}$
is a proper generating ideal of $\mathcal{A}_{\fc}$.
\end{prop}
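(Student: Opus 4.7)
The plan is to handle the two directions separately, with the converse being almost immediate from material already established and the forward direction requiring a short two-step argument.

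For the forward direction, suppose $I$ is a proper generating ideal and let $\fj = \Psi_{\mathcal{A}_{\fc}}(I)$ be the associated closed $\fc$-ideal function guaranteed by Theorem \ref{thm:main_ideal}. Since $I$ is proper, $\fj(0) \subsetneq \fc(0) = \CZ$, i.e.\ $\set{\fj}(0) \neq \emptyset$; the goal is to show $\set{\fj}(0) \subset Q(\fc)$. The first step is to check that $\set{\fj}(n) = \set{\fc}(n)$ for every $n \neq 0$. Using Lemma \ref{lem:noclosure} write $\mathcal{A}_{\fc} = \CZ + I$, so any $e^{inx}\eta$ with $\eta \in \fc(n)$ decomposes as $a + b$ with $a \in \CZ$ and $b \in I$. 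Applying $\mathbb{E}_{\mu}$ and using $\mathbb{E}_{\mu}(e^{inx}\eta) = 0$ for $n \neq 0$ gives $a = -\mathbb{E}_{\mu}(b) = -\overbracket{b}(0)$; by Proposition \ref{prop:fourier_coef} applied to $b \in I$ we have $\overbracket{b}(0) \in \fj(0) \subset I$, so $a \in I$ and hence $e^{inx}\eta = a+b \in I$, i.e.\ $\eta \in \fj(n)$. Combined with the built-in containment $\fj(n) \subset \fc(n)$, this yields equality for every $n \neq 0$.

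The second step is to specialize the closedness condition $\ctwoR{\fc}$ to the case $m + n = 0$. For $k \neq 0$ with $k \in \Supp(\fc)$, note that $-k \in \Supp(\fc) \subset \Supp(\fj)$ by $\cone$ and the first step, so with $m = -k$, $n = k$ the axiom gives
\[
\set{\fj}(0) \;\subset\; \tau^{k}\set{\fj}(k) \cup \set{\fc}(-k) \;=\; \tau^{k}\set{\fc}(k) \cup \set{\fc}(-k) \;=\; \set{\fc}(-k),
\]
the last step using $\cone$ for $\fc$. For $k \notin \Supp(\fc)$ the inclusion $\set{\fj}(0) \subset \set{\fc}(k) = S^{1}$ is automatic. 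Intersecting over all $k \neq 0$ yields $\emptyset \neq \set{\fj}(0) \subset Q(\fc)$, establishing the forward direction.

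For the converse, assume $Q := Q(\fc) \neq \emptyset$. Proposition \ref{prop:not-simple} (together with Remark \ref{rem:specialI}) already identifies $I = \{a \in \mathcal{A}_{\fc} : \overbracket{a}(0)\!\upharpoonright_{Q} = 0\}$ as a closed two-sided ideal. It is proper because $\overbracket{1}(0) = 1$ does not vanish on the nonempty set $Q$. To see it is generating, decompose any $a \in \mathcal{A}_{\fc}$ as $a = \mathbb{E}_{\mu}(a) + (a - \mathbb{E}_{\mu}(a))$: the first summand is in $\CZ$ while the second satisfies $\overbracket{(a - \mathbb{E}_{\mu}(a))}(0) = 0$ and so certainly vanishes on $Q$, placing it in $I$. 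Thus $\mathcal{A}_{\fc} = \CZ + I$, as required.

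The only nontrivial point is the forward direction, and within it the main step is realizing that axiom $\ctwoR{\fc}$ applied with $m + n = 0$ is precisely the mechanism which converts information about $\set{\fj}$ at nonzero indices (controlled by $I$ being generating) into information about $\set{\fj}(0)$ (which controls properness).
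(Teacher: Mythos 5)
Your proof is correct, but it takes a genuinely different route from the paper's. The paper argues the forward direction by contradiction: assuming $Q(\fc)=\emptyset$, it picks a finite subcover of $\{U_n = S^1\setminus\set{\fc}(n)\}_{1\le|n|\le N}$, builds a subordinate partition of unity $\{\xi_n\}$ with $\sum \xi_n^2 = 1$, forms the element $c = \sum_{1\le|n|\le N} e^{inx}\hat\xi_n$, and then shows (via Lemma \ref{lem:noclosure} and $\mathbb{E}_\mu(I)\subset I$) that $c\in I$ and hence $\mathbb{E}_\mu(c^*c)=\sum\hat\xi_n^2 = 1\in I$, contradicting properness. Your argument instead stays entirely inside the classification machinery: you first show the ideal function $\fj$ of $I$ satisfies $\fj(n)=\fc(n)$ for all $n\neq 0$ (using Lemma \ref{lem:noclosure} to split $e^{inx}\eta = a + b$ and then pushing $a=-\mathbb{E}_\mu(b)$ into $I$ via Proposition \ref{prop:prop} or \ref{prop:fourier_coef}), and then apply the closedness axiom $\ctwoR{\fc}$ at $m+n=0$ to get $\set{\fj}(0)\subset \set{\fc}(k)$ for all $k\neq 0$, so $\emptyset\neq\set{\fj}(0)\subset Q(\fc)$. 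In fact your second step is already packaged as Proposition \ref{prop:prop} (last bullet: $\set{\fj}(0)\subset\set{\fj}(n)$ for all $n$), which combined with your step one immediately gives $\set{\fj}(0)\subset\bigcap_{n\neq 0}\set{\fc}(n)$, so you could shorten further. Both proofs hinge on Lemma \ref{lem:noclosure}; yours is more structural and yields the stronger conclusion that $\set{\fj}(0)\subset Q(\fc)$, while the paper's is more analytic and self-contained in avoiding the $\fc$-ideal-function axioms. The converse direction in both cases is the same citation of Proposition \ref{prop:not-simple} and Remark \ref{rem:specialI}.

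One small remark on Step 1: technically the statement does not assume $\fc$ is closed, so to invoke Theorem \ref{thm:main_ideal} and the axioms for $\fj$ you should note that $\mathcal{A}_\fc = \Phi(\fc) = \Phi(\Psi\Phi\fc)$ allows replacing $\fc$ by its closure $\Psi\Phi\fc$, and that $Q(\Psi\Phi\fc)\subset Q(\fc)$ since $\set{\Psi\Phi\fc}(n)\subset\set{\fc}(n)$, so nonemptiness passes upward. The paper's proof has the same implicit step.
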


\begin{proof}
Let $I_0 = I \cap C^*(\Z)$ and let $I = I_{\fj}$ as in Theorem
\ref{thm:main_ideal}.
It then follows that $\mathbb{E}_\mu( I) \subset I_0$. 

Now suppose to the contrary that $Q = \bigcap_{|n| \geq 1} \set{\fc}(n)= \emptyset$.
As we observed above there is an $N \geq 1$ such that $\bigcap_{1 \leq |n| \leq N} \set{\fc}(n) = \emptyset$.
Denoting $U_n = S^1 \setminus \set{\fc}(n)$, we have $S^1 = \bigcup_{1 \leq |n| \leq N} U_n$.
Let $\{\xi_n\}_{1 \leq |n| \leq N}$ be a partition of unity subordinate to the open cover
$\{U_n\}_{1 \leq |n| \leq N}$; i.e. $0 \leq \xi_n \leq 1$, $\sum_{1 \leq |n| \leq N} \xi^2_n = 1$,
and $\supp \xi_n \subset U_n$.
Let $c = \sum_{1 \leq |n| \leq N} e^{inx}\hat{\xi}_n$.

By Lemma \ref{lem:noclosure} we have $c = \hat{\phi}_0 + d$, with
$\hat{\phi}_0 \in C^*(\Z)$ and $d \in I$.
Now $0 = \mathbb{E}_\mu(c)  = \mathbb{E}_\mu(\hat{\phi}_0 + d) =
\hat{\phi}_0 + \mathbb{E}_\mu(d)$, hence $\hat{\phi}_0 = - \mathbb{E}_\mu(d)   \in I_0 \subset I$
and therefore also $c = \hat{\phi}_0 + d \in   I$.
We now conclude that  
$\mathbb{E}_\mu(c^*c) = \sum_{1  \leq |n| \leq N} \hat{\xi}_n^2 \in I$.
But $\sum_{1  \leq |n| \leq N} \xi_n^2 =1_{S^1}$,
 contradicting our assumption that $I$ is a proper ideal.

\vspace{.3cm}

The other direction follows from Proposition \ref{prop:not-simple} and Remark \ref{rem:specialI}. 
 
\end{proof}

\begin{prop}\label{prop:necessary}
Suppose  $\mathcal{A}_{\fc}$ is not simple.
Let $I < \mathcal{A}_{\fc}$ be a proper ideal
(which we can assume to be maximal) and
let  $\mathcal{B} = C^*(\mathbb{Z}) + I \subset \mathcal{A}_{\fc}$.
Then $\mathcal{B}$ is a sub-$C^*$-algebra, and  $I$ is a generating ideal for $\mathcal{B}$.
Moreover, $\mathcal{B} = \mathcal{A}_{\fd}$ for an ideal function $\fd$ with
$\set{\fd}(n) \supset \set{\fc}(n)$ for every $n \in \mathbb{Z}$,
and such that $Q(\fd) = \bigcap_{1 \leq |n|} \set{\fd}(n)\not = \emptyset$.
\end{prop}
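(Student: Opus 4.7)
The plan is to apply the preceding proposition (whose ``easy'' direction says $Q(\fc) \ne \emptyset$ whenever $\mathcal{A}_{\fc}$ admits a proper generating ideal) to the subalgebra $\mathcal{B}$ itself, after first verifying that $\mathcal{B}$ is an intermediate algebra of the form $\mathcal{A}_{\fd}$ inside which $I$ is a proper generating ideal.

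First I would assemble the preliminary facts already in the paper. The earlier lemma says $\overline{C^*(\mathbb{Z}) + I}$ is a $C^*$-subalgebra containing $C^*(\mathbb{Z})$ in which $I$ is a proper ideal, and the corollary to Lemma \ref{lem:noclosure} asserts that $C^*(\mathbb{Z}) + I$ is already closed for any ideal $I \lhd \mathcal{A}_{\fc}$. Combining these, $\mathcal{B} = C^*(\mathbb{Z}) + I$ is a $C^*$-subalgebra sitting between $C^*(\mathbb{Z})$ and $\mathcal{A}_{\fc}$. A one-line argument handles properness of $I$ in $\mathcal{B}$: if $I = \mathcal{B}$, then $1 \in C^*(\mathbb{Z}) \subset I$, contradicting $I \subsetneq \mathcal{A}_{\fc}$. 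So $I$ is proper, and by construction $\mathcal{B} = \overline{C^*(\mathbb{Z}) + I}$, i.e.\ $I$ is a generating ideal of $\mathcal{B}$ in the sense of \thref{genrideal}.

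Next, part (\ref{itm:perfect}) of Theorem \ref{thm:main} identifies $\mathcal{B}$ with $\mathcal{A}_{\fd}$ for the closed ideal function $\fd := \Psi(\mathcal{B})$. Monotonicity of $\Psi$ (Proposition \ref{prop:galois_connection}(\ref{itm:gc2})) applied to the inclusion $\mathcal{B} \subset \mathcal{A}_{\fc}$ yields $\fd \preceq \fc$ as ideal functions, which translates on the Gelfand side to $\set{\fd}(n) \supset \set{\fc}(n)$ for every $n \in \mathbb{Z}$, as required.

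Finally, I would invoke the preceding proposition on $\mathcal{A}_{\fd} = \mathcal{B}$, using $I$ as its proper generating ideal; its first assertion immediately gives $Q(\fd) = \bigcap_{|n| \ge 1} \set{\fd}(n) \ne \emptyset$. There is no genuine obstacle here; the whole argument is essentially bookkeeping that lets us transfer the ``generating-ideal implies $Q \ne \emptyset$'' criterion from $\mathcal{A}_{\fc}$ to its canonical sub-$C^*$-algebra $\mathcal{B}$. The one subtle ingredient, without which the proposal would collapse, is the closedness of $C^*(\mathbb{Z}) + I$ supplied by Lemma \ref{lem:noclosure} and its corollary; this is what ensures $\mathcal{B}$ is already a $C^*$-algebra and that $I$ generates it \emph{on the nose}, not merely up to closure.
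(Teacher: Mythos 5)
Your proof is correct, and it assembles exactly the ingredients the paper itself relies on: the lemma preceding Definition \ref{genrideal} (which already records that $I$ is a proper ideal of $\overline{C^*(\mathbb{Z})+I}$), the corollary to Lemma \ref{lem:noclosure} (which drops the closure bar), Theorem \ref{thm:main}(\ref{itm:perfect}) with the monotonicity of $\Psi$, and the unlabeled proposition immediately before the statement. The paper in fact states Proposition \ref{prop:necessary} without proof, evidently regarding it as an immediate consequence of those preceding results; your write-up fills in precisely that bookkeeping and is faithful to the intended argument.
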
 

As was mentioned above, the algebras 
$\mathcal{C}^q =C(X_q) \rtimes_r \mathbb{Z}$ are simple.
We next describe additional examples of simple intermediate subalgebras. These include the basic subalgebras 
corresponding to the ideal functions $\fb_{q,p} = \fb_{q,\{p\}}$, with $0 \ne q \in \mathbb{N}, p \in S^1$,
thus generalizing (in the context of $\mathcal{C}$) 
\cite[Proposition 11.3.21]{Phill} (in view of Subsection 
\ref{Putnam}).

\begin{prop}
\thlabel{residualsimplicity}
Assume that $\cA \in \fA^r$ is a residual intermediate algebra with $\fc = \Psi(\cA)$ such that $\fc(q)=\{p\}$ is a singleton for some $0 \ne q \in \Z$. Then, $\cA$ is simple.
\end{prop}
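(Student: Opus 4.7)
My plan is to argue by contradiction. Suppose $J \lhd \cA$ is a proper non-trivial closed two-sided ideal with associated $\fc$-ideal function $\fj = \Psi_{\cA}(J)$, and set $M = \set{\fj}(0)$. By Proposition \ref{prop:prop}, non-triviality of $J$ is equivalent to $\emptyset \subsetneq M \subsetneq S^1$, and by \thref{cor:residual}(\ref{itm:residual_ideal}) the ideal $J$ is automatically residual. The main tool I will use is the containment
$$M \subset \tau^{-k} M \cup \set{\fc}(k), \qquad \forall k \in \Supp(\fc),$$
which appears as equation (\ref{eqn:M2}) within the proof of \thref{cor:residual}.

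Specializing at $k = q$ and $k = -q$, and using $\set{\fc}(q)=\{p\}$ together with $\set{\fc}(-q) = \tau^q\{p\} = \{\tau^q p\}$ (from $\cone$), the two containments become
$$M \subset \tau^{-q} M \cup \{p\}, \qquad M \subset \tau^q M \cup \{\tau^q p\}.$$
The crucial additional fact is that $\tau^q$ is itself an irrational rotation of $S^1$ (since $q\alpha$ remains an irrational multiple of $2\pi$ for $q \ne 0$), hence minimal with every orbit infinite and dense. I will iterate the first containment: given $x \in M$, either $\tau^{nq} x = p$ for some $n \ge 0$ (so $x \in \{\tau^{-nq}p : n \ge 0\}$), or else the whole forward $\tau^q$-orbit of $x$ lies in $M$, which being dense forces the closed set $M$ to equal $S^1$, contradicting $M \subsetneq S^1$. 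This proves $M \subset \{\tau^{-nq}p : n \ge 0\}$. Symmetrically, iterating the second containment yields $M \subset \{\tau^{kq}p : k \ge 1\}$.

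Since $\tau^q$ is irrational, the orbit $\{\tau^{kq}p : k \in \Z\}$ consists of pairwise distinct points, so the two candidate sets intersect trivially. Hence $M = \emptyset$, contradicting $M \ne \emptyset$, and no proper non-trivial ideal can exist. The main (but modest) technical point I expect is to carefully formalize the iteration step showing that avoidance of $p$ by the forward orbit of $x$ propagates membership in $M$ to every forward iterate; this is routine once the minimality of $\tau^q$ is noted.
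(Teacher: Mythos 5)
Your proof is correct. You start from exactly the same key ingredient as the paper, namely the containment $M \subset \tau^{-k} M \cup \set{\fc}(k)$ of Equation (\ref{eqn:M2}) specialized to $k = \pm q$ and combined with the singleton hypothesis $\set{\fc}(q) = \{p\}$. The endgame, however, is organized differently. The paper's proof first establishes the general fact (\ref{eqn:M3}) that $\emptyset \subsetneq M \cap \set{\fc}(k) \subsetneq \set{\fc}(k)$ for every $k \ne 0$, by splitting into the cases $M \cap \set{\fc}(k) = \emptyset$ (which forces $\tau^k M \subset M$, hence $M$ contains a nonempty closed $\tau^q$-invariant set and so $M = S^1$) and $\set{\fc}(k) \subset M$ (symmetric via $-k$); the desired contradiction is then immediate because a singleton admits no such nonempty proper subset. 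You instead iterate (\ref{eqn:M2}) at $k=q$ and at $k=-q$ to conclude that, unless $M = S^1$, every point of $M$ must eventually hit $p$ (respectively $\tau^q p$) under the semigroup $\{\tau^{nq}\}_{n\ge 0}$ (respectively $\{\tau^{-nq}\}_{n\ge 0}$), so $M$ is contained simultaneously in the backward and in the strictly forward $\tau^q$-half-orbits of $p$, which are disjoint because the $\tau^q$-orbit of $p$ is free. Both arguments are routine consequences of minimality of $\tau^q$; the paper's version is slightly slicker and gives (\ref{eqn:M3}) as a by-product, while your explicit orbit-chasing version is arguably more elementary and avoids the abstract "nonempty closed invariant subset" step. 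No gaps.
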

\begin{proof}
  Assume towards a contradiction that $J \lhd \cA$ is a non-trivial ideal and let $\fj$ be the corresponding $\fc$-ideal function.  Since $\cA$ is residual, everything from the proof of \thref{cor:residual} in sub-Section \ref{sub:residual} applies. We thus adopt the notation from that proof and, in particular, set $M = \set\fj(0)$. As in that proof, the non-triviality of the ideal implies $M \ne \emptyset$ and $M \ne S^1$. 
    
    Using Equation (\ref{eqn:M2}) from the proof of \thref{cor:residual}, we conclude that
    \begin{equation} \label{eqn:M3}
        \emptyset \subsetneq M \cap \set\fc(k) \subsetneq 
        \set\fc(k), \qquad \forall k \ne 0.
    \end{equation}
    Indeed if $M \cap \set\fc(k) = \emptyset$ then Equation 
    (\ref{eqn:M2}) would read $M \subset \tau^{-k}M$. By minimality of $\tau$, the closed subset $M \subset S^1$ is either empty or everything, which is not the case since $J$ is a nontrivial ideal. Similarly if $\set\fc(k) \subset M$ we can apply  Equation (\ref{eqn:M2}) with $-k$ instead of $k$, to obtain 
    $$M \subset \tau^k M \cup \set{\fc}(-k) = \tau^k \left(M \cup \set{\fc}(k)\right) = \tau^k M,$$ 
    leading to the same contradiction. Taking $k = q$, we obtain a contradiction to the assumption that 
    $\set\fc(q) = \{p\}$ is a singleton.    
\end{proof}

\begin{remark}
By \cite[Theorem 12.2.5]{Phill},
when a closed subset $P \subset S^1$
is such that $\tau^n P \cap P =\emptyset$
for all $n \not = 0$, the algebra
$\mathcal{A}_{1,P}$ is {\it centrally large} (hence large). It is simple and infinite dimensional by 
\cite[Theorem 12.2.6]{Phill}. Presently, we don't know how to prove this
result using our methods.
\end{remark}

\section{Crossed products of non \texorpdfstring{$C^*$}{}-simple groups}\label{sec:not-simple}
Many of the methods we used to obtain a complete classification of intermediate algebras in Theorem \ref{thm:main} are pretty specific to the system we considered there. 
This section is dedicated to the proof of Theorem (\ref{notcrossedprod}), where we show that the mere existence of intermediate algebras that do not come from dynamical factors holds for a much more general 
class of crossed products of the form $\cA \rtimes_r\Gamma$. For example,
this is the case whenever $\cA$ admits a faithful $\Gamma$-invariant state, $\Gamma$ is not $C^*$-simple and does not admit a normal subgroup isomorphic to
$\Z/2\Z$.

\subsection{Crossed products}
We briefly describe the construction of the crossed product for unital $C^*$-algebras in general and refer the reader to \cite{BO:book} for more details.~\par
\vskip1mm
\noindent
Let $\Gamma$ be a discrete group and $\mathcal{A}$ be an unital $C^*$-algebra. An action of $\Gamma$ on $\mathcal{A}$ is a group homomorphism $\alpha$ from $\Gamma$ to the group of $*$-automorphisms on $\mathcal{A}$. A $C^*$-algebra equipped with a $\Gamma$-action is called a $\Gamma$-$C^*$-algebra. Suppose that $\mathcal{A}$ is a unital $\Gamma$-$C^*$-algebra. Let $\pi:\mathcal{A} \to \mathbb{B}(\mathcal{H})$ be a faithful $*$-representation. Let $\ell^2(\Gamma,\mathcal{H})$ be the space of square summable $\mathcal{H}$-valued functions on $\Gamma$, i.e.,
\[\ell^2(\Gamma,\mathcal{H})=\left\{\xi:\Gamma\to \mathcal{H}\text{ such that }\sum_{t\in\Gamma}\|\xi(t)\|_{\mathcal{H}}^2<\infty.\right\}\]
There is an action $\Gamma\curvearrowright \ell^2(\Gamma,H)$ by left translation:
\[\lambda_s\xi(t):=\xi(s^{-1}t), \xi \in \ell^2(\Gamma,\mathcal{H}), s,t \in \Gamma\]
Let $\sigma$ be the $*$-representation \[\sigma:\mathcal{A} \to \mathbb{B}(\ell^2(\Gamma,\mathcal{H}))\] defined by \[\sigma(a)(\xi)(t):=\pi(t^{-1}a)\xi(t), a \in \mathcal{A}\]
where $\xi \in \ell^2(\Gamma,\mathcal{H})$, $t \in \Gamma$.
The reduced crossed product $C^*$-algebra $\mathcal{A}\rtimes_{r}\Gamma$ is the closure in $\mathbb{B}(\ell^2(\Gamma,\mathcal{H}))$ of the subalgebra generated by the operators $\sigma(a)$ and $\lambda_s$. Note that $\lambda_s\sigma(a)\lambda_{s^{-1}}=\sigma(s.a)$ for all $s \in \Gamma$ and $a \in \mathcal{A}$.

It follows from the construction that $\mathcal{A}\rtimes_r\Gamma$ contains $C_{\lambda}^*(\Gamma)$ as a $C^*$-sub-algebra. The reduced crossed product $\mathcal{A}\rtimes_r\Gamma$ comes equipped with a $\Gamma$-equivariant canonical conditional expectation $\mathbb{E}:\mathcal{A}\rtimes_r\Gamma\to\mathcal{A}$ defined by 
\[\mathbb{E}\left(\sigma(a_s)\lambda_s\right)=\left\{ \begin{array}{ll}
0 & \mbox{if $s\ne e$}\\
\sigma(a_e) & \mbox{otherwise}\end{array}\right\}\]
It follows from \cite[Proposition~4.1.9]{BO:book} that $\mathbb{E}$ extends to a faithful conditional expectation from $\mathcal{A}\rtimes_r\Gamma$ onto $\mathcal{A}$. Moreover, for a subgroup $H\le\Gamma$, there is a faithful conditional expectation $\mathbb{E}_H:\mathcal{A}\rtimes_r\Gamma\to\mathcal{A}\rtimes_rH$ (see~\cite[Remark~3.2]{khoshkam1996hilbert}) defined by 
\[\mathbb{E}_H\left(\sigma(a_s)\lambda_s\right)=\left\{ \begin{array}{ll}
\sigma(a_s)\lambda_s & \mbox{if $s\in H$}\\
0 & \mbox{otherwise}\end{array}\right\}\]

Explicit examples of intermediate algebras not coming from any factor $Z$ of the form $Y\xrightarrow[]{}Z\xrightarrow[]{}X$, associated with an inclusion $C(X)\subset C(Y)$ were presented in \cite[Proposition~2.6]{Suz18}. 
Ideals have also been used to create intermediate algebras for tensor product inclusion which do not split (see, e.g.,~\cite[Corollary~3.4]{zacharias2001splitting}).\\
This section demonstrates that the ideals obstruct the intermediate algebras being crossed products. Given a unital $\Gamma$-$C^*$-algebra, we show that we can 
%continually 
create an algebra that is not a crossed product in a canonical way, as long as the reduced $C^*$-algebra is not simple, under some assumptions on the unital $C^*$-algebra $\mathcal{A}$.  

\subsection{Intermediate \texorpdfstring{$C^*$}{}-algebra associated to an ideal}
Let $\Gamma$ be a discrete group acting continuously on an unital $C^*$-algebra $\mathcal{A}$ by $*$-automorphisms. Assume that there exists a normal subgroup $N\triangleleft\Gamma$ such that $C_r^*(N)$ is not $\Gamma$-simple, i.e., $C_r^*(N)$ has a $\Gamma$-invariant closed two sided ideal. Let $J$ be a two-sided, non-trivial, closed $\Gamma$-invariant ideal in $C_r^*(N)$. We observe that if $N\triangleleft\Gamma$ is such that $C_r^*(N)$ is not $\Gamma$-simple, then $\Gamma$ is a not a $C^*$-simple group (see~\cite[Theorem~1.1]{AK}).
Using $J$, we can build a non-trivial closed two-sided ideal $I$ inside $C_r^*(\Gamma)$.
\begin{lemma}
\thlabel{biggeridealassociatedtoasmallerone}
Let $N\triangleleft\Gamma$ be such that $C_r^*(N)$ is not $\Gamma$-simple. Let $J$ be a two-sided, non-trivial, closed $\Gamma$-invariant ideal in $C_r^*(N)$. Then,
\[I=I_J=\overline{\text{Span}\left\{\eta a: \eta\in J, a \in C_r^*(\Gamma)\right\}}\] is a non-trivial closed two sided ideal of $C_r^*(\Gamma)$ which contains $J$.
\begin{proof}
Clearly, $I$ is closed in $\|.\|$ and also, under addition. 
By construction, $I$ is closed under right multiplication. To show that $I$ is closed under the left multiplication, it is enough to show that $\lambda(s)\eta a\in I$ for any $\eta\in J$ and $a\in C_r^*(\Gamma)$. Again writing $\lambda(s)\eta=\eta_s\lambda(s)$ and using the $\Gamma$-invariance of $J$, 
\[\lambda(s)\eta a=\left(\lambda(s)\eta\lambda(s)^*\right)\lambda(s)a=\eta_s\lambda(s)a\in I.\]
This automatically implies that $I$ is a two-sided ideal of $C_r^*(\Gamma)$.
All that needs to be established now is that $I$ is non-trivial. Since $J\ne 0$, $I\ne 0$. Let us show that $I\ne C_r^*(\Gamma)$. Towards a contradiction, let us assume otherwise. Then, given $0 < \epsilon < 1$, we can find $\eta_1,\eta_2,\ldots,\eta_k\in I$ and $a_1,a_2,\ldots,a_k\in C_r^*(\Gamma)$ such that
\[\left\|\lambda(e)-\sum_{i=1}^k\eta_ia_i\right\|<\epsilon.\]
Let $\mathbb{E}_N: C_r^*(\Gamma)\to C_r^*(N)$ denote the canonical conditional expectation from $C_r^*(\Gamma)$ onto $C_r^*(N)$. Note that $\mathbb{E}_N$ sends every element in $N$ to itself and sends each element of $\Gamma \setminus N$ to zero. Applying $\mathbb{E}_N$ to the above inequality and using the fact that $J$ falls in the multiplicative domain of $\mathbb{E}_N$, we see that
\[\left\|\lambda(e)-\sum_{i=1}^k\eta_i\mathbb{E}_N(a_i)\right\|<\epsilon<1.\]
This forces $\sum_{i=1}^k\eta_i\mathbb{E}_N(a_i)\in J$ to be an invertible operator. However, $J\triangleleft C_r^*(N)$ is a non-trivial ideal. This contradicts our assumption that $I=C_r^*(\Gamma)$. Therefore, we are done since $I\ne C_r^*(\Gamma)$.
\end{proof}
\end{lemma}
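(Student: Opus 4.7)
The plan is to check the closed-two-sided-ideal properties one at a time, with essentially all the work going into showing that $I_J$ is a proper subset of $C_r^*(\Gamma)$. First, $I_J$ is closed by construction, and it is closed under right multiplication essentially by design: $(\eta a) b = \eta(ab)$ with $ab \in C_r^*(\Gamma)$. Left multiplication is the first nontrivial check, and here I would exploit $\Gamma$-invariance of $J$: for a spanning element $\eta a$ and a group unitary $\lambda(s)$, write
\[
\lambda(s)\,\eta\, a \;=\; \bigl(\lambda(s)\,\eta\,\lambda(s)^{*}\bigr)\bigl(\lambda(s)\,a\bigr),
\]
and observe that $\lambda(s)\eta\lambda(s)^{*} \in J$ by $\Gamma$-invariance while $\lambda(s)a$ remains in $C_r^*(\Gamma)$. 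Since $C_r^*(\Gamma)$ is the closed linear span of the $\lambda(s)$, linearity and continuity extend this to all left multipliers. The inclusion $J \subset I_J$ follows because $C_r^*(\Gamma)$ is unital, so that $\eta = \eta \cdot 1 \in I_J$; in particular $I_J \neq 0$.

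The main obstacle is showing $I_J \neq C_r^*(\Gamma)$, and here I plan to argue by contradiction. If $I_J = C_r^*(\Gamma)$, then for any prescribed $\epsilon < 1$ we could approximate the identity by a finite sum $\sum_{i=1}^k \eta_i a_i$ with $\eta_i \in J$ and $a_i \in C_r^*(\Gamma)$. The natural tool to bring in is the canonical conditional expectation $\mathbb{E}_N : C_r^*(\Gamma) \to C_r^*(N)$ from Subsection~5.1. Applying $\mathbb{E}_N$ to $\|\lambda(e) - \sum_i \eta_i a_i\| < \epsilon$ and using that $C_r^*(N)$ sits inside the multiplicative domain of $\mathbb{E}_N$ (so that $\mathbb{E}_N(\eta_i a_i) = \eta_i\,\mathbb{E}_N(a_i)$), I would obtain
\[
\Bigl\| 1 - \sum_{i=1}^k \eta_i\,\mathbb{E}_N(a_i) \Bigr\| < 1.
\]
The left side being smaller than $1$ forces $\sum_i \eta_i\,\mathbb{E}_N(a_i)$ to be invertible in $C_r^*(N)$. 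But this element lies in $J$ (since $J$ is a two-sided ideal of $C_r^*(N)$ and each $\eta_i \in J$), contradicting the fact that $J$ is a proper ideal.

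The only delicate point is the legitimacy of applying $\mathbb{E}_N$ and the multiplicative-domain identity $\mathbb{E}_N(\eta a) = \eta\,\mathbb{E}_N(a)$ for $\eta \in C_r^*(N)$. Both are standard: $\mathbb{E}_N$ is a faithful contractive conditional expectation onto $C_r^*(N)$ whose image is automatically in its multiplicative domain, as recorded in the paragraph preceding the lemma. Once this is granted, every remaining step is a short manipulation, and the proof is complete.
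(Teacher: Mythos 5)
Your proposal is correct and follows essentially the same route as the paper's proof: the $\Gamma$-invariance identity $\lambda(s)\eta a = (\lambda(s)\eta\lambda(s)^*)(\lambda(s)a)$ for the two-sided ideal property, and the conditional expectation $\mathbb{E}_N$ together with the multiplicative-domain identity and the invertibility-near-$1$ trick to rule out $I_J = C_r^*(\Gamma)$. The only marginal refinement you add is making explicit that $J \subset I_J$ via $\eta = \eta \cdot 1$, which the paper leaves implicit.
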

Given an ideal $I\triangleleft C_r^*(\Gamma)$, let us associate the following operator space $\cA_I$ to $I$ defined by
\[\mathcal{A}_I=\text{Span}\left\{\eta \tilde{a}\eta': \eta,\eta'\in I, ~\tilde{a}\in\cA\rtimes_r\Gamma \right\}\]
Since $I$ is a closed two-sided ideal, $\cA_I$ is closed under the $*$-operation.
We claim that $\overline{\cA_{I}}$ is a $C^*$-algebra. It is enough to show that $\overline{\cA_I}$ is closed under multiplication. Let us denote by $\cA_{\rtimes_{r,\text{alg}}}\Gamma$ the collection of finite sums of the form $\sum_{i=1}^n a_{s_i}\lambda(s_i)$, or more formally
\[\cA\rtimes_{r,\text{alg}}\Gamma=\left\{\sum_{s\in F}a_s\lambda(s)\ | \ F\subset\Gamma, |F|<\infty, ~a_s\in \mathcal{A}\right\}\]
Also, recall that $\cA\rtimes_{r,\text{alg}}\Gamma$ is norm dense inside $\cA\rtimes_{r}\Gamma$. If there exists a $\Gamma$-invariant state $\varphi$ on $\mathcal{A}$, then the map $\mathbb{E}_{\varphi}:\cA\rtimes_{r,\text{alg}}\Gamma\to C_r^*(\Gamma), a\lambda(s)\mapsto\varphi(a)\lambda(s)$ extends to a well-defined map at the level of $\cA\rtimes_{r}\Gamma$ (see e.g., \cite[Exercise~4.1.4]{BO:book}). 
\begin{prop}
\thlabel{algebra}
$\overline{\cA_I}$ is a $C^*$-algebra. Moreover,
$\mathcal{B}=C_r^*(\Gamma)+\overline{\mathcal{A}_{I}}$ is an intermediate $C^*$-algebra.
\end{prop}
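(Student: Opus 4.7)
The proposition makes two assertions: that $\overline{\mathcal{A}_I}$ is a $C^*$-algebra, and that $\mathcal{B} = C_r^*(\Gamma) + \overline{\mathcal{A}_I}$ is an intermediate $C^*$-algebra $C_r^*(\Gamma) \subset \mathcal{B} \subset \mathcal{A} \rtimes_r \Gamma$. The plan is to dispatch these in sequence, with the recurring observation being that $I \lhd C_r^*(\Gamma)$ being a \emph{two-sided} closed ideal supplies the absorption needed for both $*$-closure and closure under multiplication.

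For the first assertion, the $*$-closure of $\mathcal{A}_I$ is immediate: $(\eta \tilde{a} \eta')^{*} = (\eta')^{*} \tilde{a}^{*} \eta^{*}$ is again of the generating form, because closed two-sided ideals in $C^*$-algebras are automatically $*$-invariant. For multiplication closure on the dense span $\mathcal{A}_I$, the key computation is
\[
(\eta_1 \tilde{a}_1 \eta_1')(\eta_2 \tilde{a}_2 \eta_2') \;=\; \eta_1 \bigl( \tilde{a}_1 \eta_1' \eta_2 \tilde{a}_2 \bigr) \eta_2',
\]
where the bracketed middle factor lies in $\mathcal{A} \rtimes_r \Gamma$, so the product is again of the generating form. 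Both properties then pass to the norm closure by continuity of the algebraic operations. An analogous computation will show $\mathcal{B}$ is closed under multiplication: in expanding $(a + x)(b + y)$ with $a, b \in C_r^*(\Gamma)$ and $x = \eta \tilde{a}' \eta', y \in \overline{\mathcal{A}_I}$, the cross-term $a x = (a\eta) \tilde{a}' \eta'$ lies in $\mathcal{A}_I$ because $a \eta \in I$ (this is precisely where I use that $I$ is an ideal of $C_r^*(\Gamma)$, not merely a subspace), and symmetrically for $x b$. Since both $C_r^*(\Gamma)$ and $\overline{\mathcal{A}_I}$ are $*$-closed, $\mathcal{B}$ is a $*$-subalgebra of $\mathcal{A} \rtimes_r \Gamma$.

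The main obstacle will be verifying that $\mathcal{B}$ is already norm-closed, rather than merely dense in a $C^*$-subalgebra. To this end, I let $\widetilde{\mathcal{B}}$ denote the norm closure of $\mathcal{B}$ in $\mathcal{A} \rtimes_r \Gamma$; by the previous step together with continuity, $\widetilde{\mathcal{B}}$ is a $C^*$-algebra. Extending the same multiplication identities from $\mathcal{B}$ to $\widetilde{\mathcal{B}}$ by continuity, I check that $\overline{\mathcal{A}_I}$ is a closed two-sided ideal of $\widetilde{\mathcal{B}}$; note that $\overline{\mathcal{A}_I}$ need \emph{not} be an ideal of the whole crossed product $\mathcal{A} \rtimes_r \Gamma$, which is exactly why I restrict to $\widetilde{\mathcal{B}}$. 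Now I invoke the standard $C^*$-algebraic fact that if $B$ is a $C^*$-subalgebra of a $C^*$-algebra $A$ and $J \lhd A$ is a closed two-sided ideal, then $B + J$ is closed in $A$: the quotient map $A \to A/J$ sends $B$ onto the $C^*$-subalgebra $B/(B \cap J) \subset A/J$, which is automatically closed, and $B + J$ is its preimage under a continuous map. Applied inside $\widetilde{\mathcal{B}}$ with $B = C_r^*(\Gamma)$ and $J = \overline{\mathcal{A}_I}$, this shows $\mathcal{B}$ itself is already closed, so $\mathcal{B} = \widetilde{\mathcal{B}}$ is a $C^*$-algebra. The inclusions $C_r^*(\Gamma) \subset \mathcal{B} \subset \mathcal{A} \rtimes_r \Gamma$ are immediate from the construction.
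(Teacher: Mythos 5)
Your proposal is correct, and it matches not the paper's primary proof but the alternative route the authors record in the remark immediately following. The paper's main proof establishes norm-closure of $\mathcal{B}$ by invoking the conditional expectation $\mathbb{E}_\varphi:\mathcal{A}\rtimes_r\Gamma\to C_r^*(\Gamma)$ (which exists by the standing hypothesis of a $\Gamma$-invariant state $\varphi$ on $\mathcal{A}$): if $a_n+b_n\to c$ with $a_n\in C_r^*(\Gamma)$, $b_n\in\overline{\mathcal{A}_I}$, applying $\mathbb{E}_\varphi$ peels off the $C_r^*(\Gamma)$-component and shows $c-\mathbb{E}_\varphi(c)\in\overline{\mathcal{A}_I}$. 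Your argument instead passes to the closure $\widetilde{\mathcal{B}}$, observes that $\overline{\mathcal{A}_I}$ is a closed two-sided ideal of $\widetilde{\mathcal{B}}$ (using only that $I$ is a two-sided ideal of $C_r^*(\Gamma)$), and then applies the standard fact (Murphy Theorem 3.1.7, equivalently Pedersen Corollary 1.5.8) that a $C^*$-subalgebra plus a closed two-sided ideal is automatically closed, via the quotient map. This is precisely the content of the paper's Remark following the proof. What your route buys is independence from the $\Gamma$-invariant state: the conditional-expectation argument needs $\varphi$ to exist, while the ideal-plus-subalgebra argument needs only the algebraic ideal structure, making the closedness of $\mathcal{B}$ a formal consequence with no extra hypotheses. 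Both arguments share the same preliminary computations for multiplicative closure of $\overline{\mathcal{A}_I}$ and of $\mathcal{B}$.
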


\begin{proof}
To show that $\overline{\cA_I}$ is a $C^*$-algebra, it is enough to show that $\overline{\cA_I}$ is closed under multiplication. Towards that end, let us choose elements of the form $\eta_1 \tilde{a_1}\eta'_1,\eta_2 \tilde{a_2}\eta'_2\in \mathcal{A}_I$ with $\eta_1,\eta'_1, \eta_2, \eta'_2\in I$ and $\tilde{a_1},\tilde{a_2}\in\mathcal{A}\rtimes_r\Gamma$. Clearly, \[\eta_1\tilde{a_1}\eta_1'\eta_2\tilde{a_2}\eta_2'=\eta_1\left(\tilde{a_1}\eta_1'\eta_2\tilde{a_2}\right)\eta_2'\in\mathcal{A}_I.\]
It now follows by a standard limiting argument that $\overline{\mathcal{A}_I}$ is closed under multiplication. 
Let us first check that $\mathcal{B}$ is norm closed. If $\{a_n+b_n\}_{n\in \mathbb{N}}\in \mathcal{B}$ is such that $a_n\in C_r^*(\Gamma)$, $b_n\in \overline{\mathcal{A}_I}$ and $a_n+b_n\to c$. then $\E_{\varphi}(a_n+b_n)\to\E_{\varphi}(c)$. Therefore, $a_n+\E_{\varphi}(b_n)\to\E_{\varphi}(c)$. As a result, we see that $b_n-\E_{\varphi}(b_n)\to c-\E_{\varphi}(c)$. Since $b_n\in \overline{\mathcal{A}_{I}}$ and $\E_{\varphi}\left(\overline{\mathcal{A}_{I}}\right)\subset \overline{\mathcal{A}_{I}}$, we see that $b_n-\E_{\varphi}(b_n)\in \overline{\mathcal{A}_{I}}$ and therefore, $c-\E_{\varphi}(c)\in\overline{\mathcal{A}_{I}}$. Hence, $c=\E_{\varphi}(c)+\left(c-\E_{\varphi}(c)\right)\in \mathcal{B}$.
Since $I$ is an ideal of $C_r^*(\Gamma)$, it follows that 
$\lambda(s)\mathcal{A}_I\subset \mathcal{A}_I$ and $\mathcal{A}_I\lambda(s)\subset \mathcal{A}_I$ for all $s \in \Gamma$. Hence, $\mathcal{B}$ is closed under multiplication.
\end{proof}
\begin{remark}
\label{rem:equalspanclosure}
It is also true that
\[\overline{\mathcal{A}_I}=\overline{\text{Span}\left\{\eta a \eta': \eta,\eta'\in I,~a\in\cA\right\}}\]
It is clear that
\[\overline{\text{Span}\left\{\eta a \eta': \eta,\eta'\in I,~a\in\cA\right\}}\subseteq\overline{\text{Span}\left\{\eta \tilde{a} \eta': \eta,\eta'\in I,~\tilde{a}\in\cA\rtimes_r\Gamma\right\}}\]
For the reverse inclusion, we observe that for $\eta,\eta'\in I$, $\eta(a\lambda(s))\eta'=\eta a(\lambda(s)\eta')$ and hence, the claim follows. \qed
\end{remark}
\begin{remark} We can also show that $\cB$ is closed in $\|.\|$ without taking the help of $\mathbb{E}_{\varphi}$.
Let us denote $\mathcal{D}=\overline{\mathcal{B}}$. Since $\mathcal{B}$ is closed under multiplication, so is $\mathcal{D}$. Consequently, $\mathcal{D}$ is a $C^*$-algebra. Moreover, $\overline{\mathcal{A}_I}$ is a closed two-sided ideal of $\mathcal{D}$. Using \cite[Theorem~3.1.7]{murphy1990}, we see that $\overline{\mathcal{A}_I}+C_r^*(\Gamma)$ is a $C^*$-subalgebra.
\end{remark}
\begin{prop}\thlabel{interm}
\thlabel{notcrossedprod-Ga}
Let $\Gamma$ be a discrete
group and $\mathcal{A}$, a $\Gamma$-$C^*$-algebra. Let $I$ be a non-trivial closed $\Gamma$-invariant two-sided ideal in $C_r^*(\Gamma)$.
Let $\varphi$ be a faithful $\Gamma$-invariant state on $\mathcal{A}$.
Let $\mathcal{B}=C_r^*(\Gamma)+\overline{\mathcal{A}_I}$.
Then, $\mathcal{B}\cap\mathcal{A}
=\mathbb{C}$. 
 \end{prop}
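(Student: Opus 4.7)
The plan is to exploit the canonical conditional expectation $\mathbb{E}_\varphi : \cA \rtimes_r \Gamma \to C_r^*(\Gamma)$, $a\lambda(s) \mapsto \varphi(a)\lambda(s)$, together with the faithfulness of $\varphi$. The first preparatory observation I would record is that $\mathbb{E}_\varphi(\overline{\cA_I}) \subset I$: for a generator $\eta \tilde{a} \eta' \in \cA_I$ with $\eta, \eta' \in I$ and $\tilde{a} \in \cA \rtimes_r \Gamma$, the $C_r^*(\Gamma)$-bimodule property of the conditional expectation yields
$$\mathbb{E}_\varphi(\eta \tilde{a} \eta') \;=\; \eta\,\mathbb{E}_\varphi(\tilde{a})\,\eta' \;\in\; I \cdot C_r^*(\Gamma) \cdot I \;\subset\; I,$$
and since $I$ is closed and $\mathbb{E}_\varphi$ is contractive, this containment transports to $\overline{\cA_I}$. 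Using a bounded approximate identity $\{u_\lambda\} \subset I$ one also sees that $\eta = \lim_\lambda \eta \cdot 1_{\cA} \cdot u_\lambda \in \overline{\cA_I}$ for every $\eta \in I$, so $I \subset \overline{\cA_I}$.

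Now let $x \in \mathcal{B} \cap \cA$ and decompose $x = c + b$ with $c \in C_r^*(\Gamma)$ and $b \in \overline{\cA_I}$. Applying $\mathbb{E}_\varphi$ and using that $\mathbb{E}_\varphi(x) = \varphi(x) \cdot 1$ for $x \in \cA$ gives
$$\varphi(x) \cdot 1 \;=\; c + \mathbb{E}_\varphi(b),$$
so $c - \varphi(x) \cdot 1 = -\mathbb{E}_\varphi(b) \in I \subset \overline{\cA_I}$. Setting $y := x - \varphi(x) \cdot 1$, we find $y = (c - \varphi(x) \cdot 1) + b \in \overline{\cA_I}$, while simultaneously $y \in \cA$. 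The theorem thus reduces to showing $\cA \cap \overline{\cA_I} = \{0\}$.

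For the final step, take any $y \in \cA \cap \overline{\cA_I}$. The preliminary fact above together with $y \in \cA$ gives
$$\varphi(y) \cdot 1 \;=\; \mathbb{E}_\varphi(y) \;\in\; I;$$
since $I$ is a proper ideal of the unital algebra $C_r^*(\Gamma)$, we have $1 \notin I$ and hence $\varphi(y) = 0$. But $\cA \cap \overline{\cA_I}$ is itself a $C^*$-subalgebra of $\cA \rtimes_r \Gamma$ (both $\cA$ and $\overline{\cA_I}$ are $\ast$-closed and closed under products), so $y^*y$ lies in the same intersection, giving $\varphi(y^*y) = 0$; the faithfulness of $\varphi$ then forces $y = 0$, and therefore $x = \varphi(x) \cdot 1 \in \mathbb{C}$. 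The step I expect to require the most care is justifying the bimodule calculation for limits in $\overline{\cA_I}$ rather than just for the algebraic generators, but this reduces to the continuity of $\mathbb{E}_\varphi$ (and of multiplication) together with the closedness of $I$; once that is in place, the role of the hypothesis "$I$ proper" is precisely to exclude $1$ from $I$, and it is this single feature that prevents any non-scalar element of $\cA$ from lying inside $\overline{\cA_I}$.
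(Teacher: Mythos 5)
Your proof is correct, and it follows the same broad strategy as the paper (exploiting the conditional expectation $\mathbb{E}_\varphi$ together with faithfulness of $\varphi$), but it executes both halves of the argument more directly. In the first half, you record the sharper inclusion $\mathbb{E}_\varphi(\overline{\cA_I}) \subset I$ (the paper only notes $\mathbb{E}_\varphi(\overline{\cA_I}) \subset \overline{\cA_I}$, and records the fact that $\sum_i \eta_i \mathbb{E}_\varphi(\tilde a_i)\eta_i'\in I$ only implicitly inside an approximation step), and you use a bounded approximate identity to get $I\subset \overline{\cA_I}$, which the paper does not need. In the second half, your derivation of $\cA \cap \overline{\cA_I} = \{0\}$ is a genuine simplification: rather than the paper's $\epsilon$-approximation of $a \geq 0$ followed by a contradiction via an almost-invertible element of the proper ideal $I$, you observe directly that any $y$ in the intersection has $\varphi(y)\cdot 1 = \mathbb{E}_\varphi(y) \in I$ and hence $\varphi(y) = 0$ (as $1\notin I$), then apply the same to $y^*y$ and invoke faithfulness. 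Finally, your reduction of $\cB \cap \cA = \C$ to $\cA\cap\overline{\cA_I} = \{0\}$ by applying $\mathbb{E}_\varphi$ alone, subtracting the scalar $\varphi(x)\cdot 1$, and using $I\subset\overline{\cA_I}$ is cleaner than the paper's argument, which shuttles between the canonical expectation $\E : \cA\rtimes_r\Gamma\to\cA$, the trace $\tau_0$, and $\mathbb{E}_\varphi$. The two proofs are organized around the same two ingredients (faithfulness of $\varphi$ and properness of $I$), but yours eliminates the approximation and invertibility detour, at the modest cost of the extra preparatory observation that $I\subset\overline{\cA_I}$.
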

\begin{proof}
It follows from \thref{algebra} that $\mathcal{B}$ is an intermediate $C^*$-algebra. Let us now suppose that $\varphi$ is a $\Gamma$-invariant state on $\mathcal{A}$ and $\mathbb{E}_{\varphi}$, the associated conditional expectation onto $C_r^*(\Gamma)$. We first claim that $\mathbb{E}_{\varphi}\left(\overline{\mathcal{A}_I}\right)\subset \overline{\mathcal{A}_I}$. For this to hold, it is enough to show that $\E_{\varphi}\left(\mathcal{A}_{I}\right)\subset\mathcal{A}_{I}$ after which the claim would follow by the density of $\mathcal{A}_{I}$ inside $
\overline{\mathcal{A}_{I}}$ and the continuity of $\E_{\varphi}$.
Towards that end, for an element of the form $\sum_{i=1}^n\eta_i\tilde{a_i}\eta'_i\in \mathcal{A}_{I}$, we see that
\[\E_{\varphi}\left(\sum_{i=1}^n\eta_i\tilde{a_i}\eta'_i\right)=\sum_{i=1}^n\eta_i\E_{\varphi}(\tilde{a_i})\eta'_i\]
Since $\eta'_i\in I$ and $\E_{\varphi}(\tilde{a_i})\in C_r^*(\Gamma)$, we see that $\E_{\varphi}(\tilde{a_i})\eta'_i\in I$ and hence, 
\[\sum_{i=1}^n\eta_i\E_{\varphi}(\tilde{a_i})\eta'_i=\sum_{i=1}^n\eta_i\mathbf{1}_{\mathcal{A}}\E_{\varphi}(\tilde{a_i})\eta'_i\in \mathcal{A}_I.\]
We now show that $\overline{\mathcal{A}_I}\cap \mathcal{A}={0}$. Assume that $0\ne a\in \overline{\mathcal{A}_I}\cap\mathcal{A}$. By replacing $a$ with $a^*a$, we can assume that $a\ge 0$. In particular, $\varphi(a)>0$. 
Let $0<\epsilon<1$ be given. We can find an element of the form $\sum_{i=1}^n\eta_i\tilde{a_i}\eta'_i\in \mathcal{A}_I$ such that 
\[\left\|a-\sum_{i=1}^n\eta_i\tilde{a_i}\eta'_i\right\|<\varphi(a)\epsilon\]
Applying the conditional expectation $\E_{\varphi}$, we see that 
\[\left\|\varphi(a)-\sum_{i=1}^n\eta_i\E_{\varphi}(\tilde{a_i})\eta'_i\right\|<\varphi(a)\epsilon\]
Let us observe that \[\sum_{i=1}^n\eta_i\E_{\varphi}(\tilde{a_i})\eta'_i\in I,\] where $I$ is a non-trivial ideal in $C_r^*(\Gamma)$. Hence, 
\[\left\|1-\frac{\sum_{i=1}^n\eta_i\E_{\varphi}(\tilde{a_i})\eta'_i}{\varphi(a)}\right\|<\epsilon<1\]
Since $\sum_{i=1}^n\eta_i\E_{\varphi}(\tilde{a_i})\eta'_i\in I$, it follows that $\frac{\sum_{i=1}^n\eta_i\E_{\varphi}(\tilde{a_i})\eta'_i}{\varphi(a)}\in I$. Now, $\frac{\sum_{i=1}^n\eta_i\E_{\varphi}(\tilde{a_i})\eta'_i}{\varphi(a)}$ being close to $1$ is an invertible operator in $I$. Hence, $I=C_r^*(\Gamma)$. This is a contradiction. Therefore, $a=0$. 

It is clear that $C_r^*(\Gamma)\cap \cA=\mathbb{C}$. Let us now show that $\cA\cap \mathcal{B}=\mathbb
{C}$. Let $\tilde{a}\in \mathcal{B}\cap\cA$. Then, we can find $a \in C_r^*(\Gamma)$ and $b\in \overline{\cA_{I}}$ such that $\tilde{a}=a+b$. Hence, $b=\tilde{a}-a$. Applying the canonical conditional expectation $\E$, we get that $\E(b)=\tilde{a}-\tau_0(a)$. In particular, $\Tilde{a}=\E(b)+\tau_0(a)$. Therefore, $b=\E(b)+\tau_0(a)-a$.
Since $b\in \overline{\mathcal{A}_{I}}$ and $\E_{\varphi}(\overline{\mathcal{A}_{I}})\subset\overline{\mathcal{A}_{I}}$, we see that $b-\E_{\varphi}(b)\in \overline{\mathcal{A}_{I}}$. Now, \begin{align*}b-\E_{\varphi}(b)&=\E(b)+\tau_0(a)-a-\E_{\varphi}\left(\E(b)+\tau_0(a)-a\right)\\&= \E(b)-\varphi\left(\E(b)\right)\in \cA\end{align*}
Since $\overline{\mathcal{A}_I}\cap \cA=\{0\}$ and $(b-\E_{\varphi}(b))\in \overline{\mathcal{A}_I}$, it follows that $b=\E_{\varphi}(b)$. As a consequence, we see that $\Tilde{a}=a+b=a+\E_{\varphi}(b)$. Applying the canonical conditional expectation on both sides, we see that $$\Tilde{a}=\E(\Tilde{a})=\E(a+\E_{\varphi}(b))=\tau_0\left(a+\E_{\varphi}(b)\right)\in \mathbb{C}.$$
\end{proof}
For the group $\mathbb{Z}/2\mathbb{Z}$ with two elements, it is possible to construct an intermediate algebra that is not a crossed product in a canonical way without using the ideals from $C_r^*(\mathbb{Z}/2\mathbb{Z})$. 
\begin{example}[Intermediate algebra for $\mathbb{Z}/2\mathbb{Z}$]
\label{ex:toycase}
Let $\Gamma=\mathbb{Z}/2\mathbb{Z}
= \{e,s\}$ and $X$, a compact Hausdorff $\Gamma$-space which admits a $\Gamma$-invariant measure with full support. Assume that $X$ has more than two points. Let $\mathcal{A}=C(X)$. Let $$J=\left\{a_e+a_s\lambda(s): a_e,a_s\in\mathbb{C},~a_e+a_s=0\right\}$$ be the non-trivial augmentation ideal.  Let $\mathcal{B}= C_r^*(\Gamma)+\overline{\mathcal{A}_J}$. In the light of \thref{algebra} and \thref{interm}, it is enough to show that $\mathbb{E}(\mathcal{B})\ne\mathbb{C}$. Since $|X|>2$, there exist $x_1$ and $x_2\in X$ such that $x_1\not\in \Gamma x_2$. 
Let $U$ and $V$ be two open neighborhoods containing $x_1$ and $\Gamma x_2$ respectively such that $U\cap V=\emptyset$. Let $f\in C(X)$ be such that $0\le f \le 1$, $f(x_1)=1$ and $\text{Supp}(f)\subset U$. 
Then, $a=(\lambda(e)-\lambda(s))f(\lambda(e)-\lambda(s))=f-f\lambda(s)-s.f\lambda(s)+s.f$. 
So, applying the canonical conditional expectation $\mathbb{E}$ on it, we see that 
$\mathbb{E}(a)=f+s.f$.
Let us now observe that $\mathbb{E}(a)(x_1)=f(x_1)+f(s^{-1}x_1)\ge 1$. 
On the other hand, $\mathbb{E}(a)(x_2)=f(x_2)+f(sx_2)=0$ since $x_2, sx_2\in V$ and $V\cap U=\emptyset$.    
\end{example}
The following example shows that the assumption on $X$ in the above example is necessary as long as the action is non-trivial.  
\begin{example}
\thlabel{ex:necessaryassumption}
Let $X=\{x_1,x_2\}$ be a two point space and $\Gamma=\{e,s\}$ denote $\mathbb{Z}/2\mathbb{Z}$. In this case, $C(X)\rtimes_r\Gamma$ can be identified with $\mathbb{M}_2(\mathbb{C)}$ via the map 
$$u \lambda(e)+v\lambda(s) \mapsto \begin{bmatrix}
    u(x_1) & v(x_1)\\v(x_2) &u(x_2)
\end{bmatrix}. $$
Moreover, under this identification, we see that:
$$C^*_r(\Gamma) \mapsto  \left\{ \left. \begin{bmatrix}
    z & w\\w &z
\end{bmatrix} \right| z,w\in \C\right\}, \qquad
C(X) \mapsto  \left\{ \left. \begin{bmatrix}
    z & 0\\0 &w
\end{bmatrix} \right| z,w\in \C\right\}.
$$
It can now be easily verified that there is no intermediate algebra between $C_r^*(\Gamma)$ and $C(X)\rtimes_r\Gamma$.
\end{example}
We now proceed to deal with all the other non-$C^*$-simple groups. 
We start with the infinite i.c.c groups. 
\subsection{Intermediate algebras for i.c.c. group actions.}
Let us recall that an infinite group $\Gamma$ is i.c.c.. if the conjugacy class of every non-trivial group element is infinite. 
\begin{lemma}
\thlabel{lem:anelementseparating}   Let $\Gamma$ be an i.c.c. group. Let $F\subset\Gamma\setminus\{e\}$ be a finite subset. Then, there exists an element $t\in \Gamma$ such that $tFt^{-1}\cap F=\emptyset$. 
\begin{proof}
Write $F=\{s_i: 1\le i\le n\}$. Let $\Gamma(s_i,s_j)=\{t\in \Gamma: ts_it^{-1}=s_j\}$.  If the claim doesn't hold, $\Gamma=\cup_{i,j}\Gamma(s_i,s_j)$. Whenever $\Gamma(s_i,s_j)$ is non-empty, it is a left coset of $\Gamma(s_i,s_i)$. It is known that, as a consequence, at least one of the subgroups $\Gamma(s_i,s_i)$ is of finite index (see the first few paragraphs of \cite{neumann1954groups}). Consequently, $s_i$ has a finite conjugacy class, contradicting our assumption that
$\Gamma$ is an i.c.c. group.
\end{proof}
\end{lemma}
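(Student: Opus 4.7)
The plan is to proceed exactly along the lines sketched in the statement, making the two ingredients precise: the covering argument and B.H.~Neumann's lemma on coset covers. Write $F = \{s_1, \ldots, s_n\}$ and, for each ordered pair $(i,j)$, define
$$
\Gamma(s_i, s_j) = \{ t \in \Gamma \ | \ t s_i t^{-1} = s_j \}.
$$
Suppose for contradiction that no $t$ works. Then every $t \in \Gamma$ satisfies $t s_i t^{-1} = s_j$ for some pair $(i,j)$, so
$$
\Gamma \ = \ \bigcup_{1 \le i, j \le n} \Gamma(s_i, s_j).
$$

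Next I observe that each nonempty $\Gamma(s_i, s_j)$ is a left coset of the centralizer $C_\Gamma(s_i) = \Gamma(s_i, s_i)$: if $t_0 \in \Gamma(s_i, s_j)$ then $\Gamma(s_i, s_j) = t_0 \, C_\Gamma(s_i)$. Thus $\Gamma$ is covered by finitely many left cosets of the subgroups $C_\Gamma(s_1), \ldots, C_\Gamma(s_n)$.

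Now invoke the theorem of B.H.~Neumann \cite{neumann1954groups}: whenever a group is covered by finitely many cosets of subgroups, at least one of those subgroups must have finite index. Therefore $C_\Gamma(s_i)$ has finite index in $\Gamma$ for some $i$, which forces the conjugacy class of $s_i$ (being in bijection with $\Gamma / C_\Gamma(s_i)$) to be finite. Since $s_i \ne e$, this contradicts the i.c.c.\ hypothesis, and the lemma follows.

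The argument is short and essentially mechanical once Neumann's coset covering theorem is invoked; the only real step to justify carefully is the reduction that each $\Gamma(s_i, s_j)$ is either empty or a coset of $C_\Gamma(s_i)$, which follows directly from the definition. No obstacle is expected here; this lemma is a standard tool whose proof is quoted verbatim in the excerpt, and I would present it in the form above.
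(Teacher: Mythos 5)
Your proof is correct and is essentially identical to the paper's: cover $\Gamma$ by the transporter sets $\Gamma(s_i,s_j)$, note each nonempty one is a left coset of the centralizer $C_\Gamma(s_i)$, and invoke B.H.~Neumann's coset-covering theorem to force one $C_\Gamma(s_i)$ to have finite index, contradicting i.c.c. No differences of substance.
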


\begin{theorem}
\thlabel{i.c.c-intermediate}
Let $\Gamma$ be an i.c.c. group and $I\le C_r^*(\Gamma)$, a non-trivial closed two-sided ideal. Let $\mathcal{A}\ne\mathbb{C}$ be a unital $\Gamma$-$C^*$-algebra with a faithful $\Gamma$-invariant state $\varphi$. Then, $\mathcal{B}=C_r^*(\Gamma)+\overline{A_I}$ is not a crossed product in a canonical way. 
\begin{proof}
The fact that $\mathcal{B}$ is an intermediate algebra is a consequence of \thref{algebra}. It also follows from \thref{interm} that $\mathcal{B}\cap\mathcal{A}=\mathbb{C}$ so that $\cB \lneqq \cA \rtimes_r \Gamma$. We show that $\overline{\mathbb{E}\left(\cA_I\right)}=\mathcal{A}$. This will show that $C^*_r(\Gamma) \lneqq \cB$ and complete the proof. 

Let $a\in \mathcal{A}$ and $\eta\in I$ be fixed. Without any loss of generality, assume that $\|a\|=\|\eta\|=1$. Moreover, replacing $\eta$ by $\eta^*\eta$ if required, we can assume that $\tau_0(\eta)\ne 0$. Let $1>\epsilon>0$. We can find a finite subset $F\subset\Gamma\setminus\{e\}$ such that \[\left\|\eta-\sum_{s\in F}c_s\lambda(s)-\tau_0(\eta)\right\|<\frac{\epsilon}{4}.\]
This, in particular, implies that
\[\left\|\sum_{s\in F}c_s\lambda(s)+\tau_0(\eta)\right\|\le\left\|\sum_{s\in F}c_s\lambda(s)+\tau_0(\eta)-\eta\right\|+\|\eta\|<2. \]
Let $t=t(F,\epsilon)\in\Gamma$ (guaranteed by \thref{lem:anelementseparating}) be such that $tFt^{-1}\cap F=\emptyset$.
Then,
\[\left\|\lambda(t)\eta^*\lambda(t^{-1})-\sum_{s\in F}\overline{c_s}\lambda(ts^{-1}t^{-1})-\overline{\tau_0(\eta)}\right\|<\frac{\epsilon}{4}.\]
Let us also observe that
\begin{align*}
&\mathbb{E}\left(\left(\sum_{s\in F}c_s\lambda(s)+\tau_0(\eta)\right)a\left(\sum_{s\in F}\overline{c_s}\lambda(ts^{-1}t^{-1})+\overline{\tau_0(\eta)}\right)\right)\\&=\mathbb{E}\left(\sum_{s,u\in F}c_s\overline{c_u}(s.a)\lambda(stu^{-1}t^{-1})+\sum_{s\in F}c_s\overline{\tau_0(\eta)}(s.a)\lambda(s)+\sum_{u\in F}\tau_0(\eta)\overline{c_u}a\lambda(tu^{-1}t^{-1})\right)\\&+|\tau_0(\eta)|^2\mathbb{E}(a)   \end{align*}
If $stu^{-1}t^{-1}=e$ for some $s,u\in F$, then it would follow that $s=tut^{-1}$ for $s,u\in F$ and this would contradict the choice of $t$. Therefore, we see that
\[\mathbb{E}\left(\left(\sum_{s\in F}c_s\lambda(s)+\tau_0(\eta)\right)a\left(\sum_{s\in F}\overline{c_s}\lambda(ts^{-1}t^{-1})+\overline{\tau_0(\eta)}\right)\right)=|\tau_0(\eta)|^2\mathbb{E}(a)\]
Now, 
\begin{align*}
&\left\|\eta a \left(\lambda(t)\eta^*\lambda(t^{-1})\right)-\left(\sum_{s\in F}c_s\lambda(s)+\tau_0(\eta)\right)a\left(\sum_{s\in F}\overline{c_s}\lambda(ts^{-1}t^{-1})+\overline{\tau_0(\eta)}\right)\right\|\\&\le\left\|\eta a \left(\lambda(t)\eta^*\lambda(t^{-1})\right)-\left(\sum_{s\in F}c_s\lambda(s)+\tau_0(\eta)\right)a\left(\lambda(t)\eta^*\lambda(t^{-1})\right)\right\|\\&+\left\|\left(\sum_{s\in F}c_s\lambda(s)+\tau_0(\eta)\right)a\left(\lambda(t)\eta^*\lambda(t^{-1})\right)-\left(\sum_{s\in F}c_s\lambda(s)+\tau_0(\eta)\right)a\left(\sum_{s\in F}\overline{c_s}\lambda(ts^{-1}t^{-1})+\overline{\tau_0(\eta)}\right)\right\|\\&\le\left\|\eta-\sum_{s\in F}c_s\lambda(s)+\tau_0(\eta)\right\|\|a\|\|\eta\|\\&+\left\|\sum_{s\in F}c_s\lambda(s)+\tau_0(\eta)\right\|\|a\|\left\|\lambda(t)\eta^*\lambda(t^{-1})-\sum_{s\in F}\overline{c_s}\lambda(ts^{-1}t^{-1})-\overline{\tau_0(\eta)}\right\|\\&\le \frac{\epsilon}{2}+\frac{2\epsilon}{4}=\epsilon.    
\end{align*}
Therefore,
\begin{align*}
&\left\|\mathbb{E}\left(\eta a \left(\lambda(t)\eta^*\lambda(t^{-1})\right)\right)-|\tau_0(\eta)|^2\mathbb{E}(a)\right\|\\&=\left\|\mathbb{E}\left(\eta a \left(\lambda(t)\eta^*\lambda(t^{-1})\right)-\left(\sum_{s\in F}c_s\lambda(s)+\tau_0(\eta)\right)a\left(\sum_{s\in F}\overline{c_s}\lambda(ts^{-1}t^{-1})+\overline{\tau_0(\eta)}\right)\right)\right\|\\&\le\left\|\eta a \left(\lambda(t)\eta^*\lambda(t^{-1})\right)-\left(\sum_{s\in F}c_s\lambda(s)+\tau_0(\eta)\right)a\left(\sum_{s\in F}\overline{c_s}\lambda(ts^{-1}t^{-1})+\overline{\tau_0(\eta)}\right)\right\|\\&\le\epsilon \end{align*}
Since $a$ and $\eta$ are fixed in the beginning and  $\epsilon>0$ is arbitrary, we see that $|\tau_0(\eta)|^2\mathbb{E}(a)\in \overline{\mathbb{E}(\mathcal{A}_I)}$. The claim follows.
\end{proof}
\end{theorem}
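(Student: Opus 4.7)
I would first reduce ``not a crossed product'' to a concrete statement about $\mathcal{B}$. By Propositions \ref{algebra} and \ref{interm}, we know that $\mathcal{B}$ is an intermediate $C^*$-algebra with $\mathcal{B} \cap \mathcal{A} = \mathbb{C}$. If it happened that $\mathcal{B} = \mathcal{C} \rtimes_r \Gamma$ for some $\Gamma$-invariant $C^*$-subalgebra $\mathcal{C} \subset \mathcal{A}$, then the canonical conditional expectation $\mathbb{E} : \mathcal{A} \rtimes_r \Gamma \to \mathcal{A}$ restricts to a map $\mathcal{B} \to \mathcal{C}$, forcing $\mathcal{C} = \mathcal{B} \cap \mathcal{A} = \mathbb{C}$ and hence $\mathcal{B} = C_r^*(\Gamma)$. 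So it suffices to produce an element of $\mathcal{B}$ that is not in $C_r^*(\Gamma)$; equivalently, to show $\mathbb{E}\bigl(\overline{\mathcal{A}_I}\bigr) \nsubseteq \mathbb{C}$. In fact I will aim for the stronger statement $\overline{\mathbb{E}(\mathcal{A}_I)} = \mathcal{A}$, which is the cleanest target.

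\textbf{The core construction.} Fix $0 \neq a \in \mathcal{A}$ (possible since $\mathcal{A} \neq \mathbb{C}$) and pick $\eta \in I$ with $\tau_0(\eta) \neq 0$; after replacing $\eta$ by $\eta^*\eta$ and rescaling we may assume $\|\eta\| = 1$ and $\tau_0(\eta) > 0$. Since ideals of $C_r^*(\Gamma)$ are automatically invariant under inner conjugation by each $\lambda(t)$, the element $\eta\, a\,(\lambda(t)\eta^* \lambda(t)^{-1})$ belongs to $\mathcal{A}_I$ for every $t \in \Gamma$. My plan is to choose $t$ so that after applying $\mathbb{E}$, all cross terms from the non-scalar part of $\eta$ are killed by group-theoretic separation, leaving essentially $|\tau_0(\eta)|^2 a$. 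Concretely, approximate $\eta \approx \tau_0(\eta) + \sum_{s \in F} c_s \lambda(s)$ for a finite $F \subset \Gamma \setminus \{e\}$; then expanding the product yields terms indexed by pairs $(s,u) \in F^2$ of the shape $c_s\overline{c_u}(s.a)\lambda(s t u^{-1} t^{-1})$, plus ``diagonal'' scalar $\times$ $\lambda$-terms, plus the genuinely scalar term $|\tau_0(\eta)|^2 a$. If $t$ is chosen with $tFt^{-1} \cap F = \emptyset$, then $stu^{-1}t^{-1} = e$ would force $s = tut^{-1} \in F \cap tFt^{-1}$, which is impossible, and likewise the lone $\lambda(s)$ and $\lambda(tu^{-1}t^{-1})$ terms carry non-identity group elements. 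So $\mathbb{E}$ kills everything except the scalar piece, giving $\mathbb{E}\bigl(\eta a (\lambda(t)\eta^*\lambda(t)^{-1})\bigr) = |\tau_0(\eta)|^2 a$ exactly (in the approximate version, up to an error driven by $\|\eta - \tau_0(\eta) - \sum c_s\lambda(s)\|$). Passing to the limit yields $|\tau_0(\eta)|^2 a \in \overline{\mathbb{E}(\mathcal{A}_I)}$, hence $\overline{\mathbb{E}(\mathcal{A}_I)} = \mathcal{A}$ as wanted.

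\textbf{The main obstacle.} The one substantive input is the combinatorial lemma: for every finite $F \subset \Gamma \setminus \{e\}$ there exists $t \in \Gamma$ with $tFt^{-1} \cap F = \emptyset$. This is where the i.c.c. hypothesis is indispensable. I would prove it by contradiction: if no such $t$ exists, then for $F = \{s_1, \dots, s_n\}$ the sets $\Gamma(s_i, s_j) := \{t : ts_i t^{-1} = s_j\}$ cover $\Gamma$; each non-empty $\Gamma(s_i, s_j)$ is a left coset of the centralizer $\Gamma(s_i, s_i) = C_\Gamma(s_i)$, so by B.H.\ Neumann's lemma on groups covered by finitely many cosets, some $C_\Gamma(s_i)$ has finite index, meaning $s_i$ has a finite conjugacy class, contradicting i.c.c. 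Beyond this lemma, the remaining work is purely quantitative book-keeping: choosing $\varepsilon$ small, controlling $\|\eta - \tau_0(\eta) - \sum_{s \in F}c_s\lambda(s)\|$, and bounding the two telescoping terms $\|\eta - (\tau_0(\eta) + \sum c_s\lambda(s))\| \cdot \|a\| \cdot \|\eta\|$ and $\|\tau_0(\eta) + \sum c_s\lambda(s)\| \cdot \|a\| \cdot \|\lambda(t)\eta^*\lambda(t)^{-1} - (\overline{\tau_0(\eta)} + \sum \overline{c_s}\lambda(tst^{-1}))\|$. These are routine once the separating $t = t(F,\varepsilon)$ is in hand.
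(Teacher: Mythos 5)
Your proposal is correct and follows essentially the same route as the paper: the same reduction to showing $\overline{\mathbb{E}(\mathcal{A}_I)} = \mathcal{A}$, the same test element $\eta\, a\, (\lambda(t)\eta^*\lambda(t)^{-1})$, the same separating lemma via B.H.\ Neumann's coset-covering argument, and the same quantitative bookkeeping. The only (minor, welcome) difference is that you spell out explicitly why $\mathcal{B} \cap \mathcal{A} = \mathbb{C}$ together with $\mathcal{B} \neq C_r^*(\Gamma)$ rules out a crossed-product form $\mathcal{C} \rtimes_r \Gamma$, a step the paper leaves implicit.
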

\subsection{Intermediate algebras for non-i.c.c. group actions.}
Let $\Gamma$ be a non-i.c.c. group. Let $\Gamma_f$ be the union of all finite conjugacy classes of $\Gamma$. $\Gamma_f$ is known as the $FC$-center of the group $\Gamma$. It is well known that $\Gamma_f\triangleleft\Gamma$ is a normal amenable subgroup (see~\cite[Section~X]{de2007simplicity}). Nonetheless, we include proof for the sake of completeness.
  
\begin{lemma}
\thlabel{finiteconjugacyclass}
$\Gamma_f$ is a normal amenable subgroup of $\Gamma$.
\begin{proof}
 Clearly $$\Gamma_f = \{h \in \Gamma \ | \ [\Gamma:C_{\Gamma}(h)] < \infty\}.$$
 Indeed, $C_{\Gamma}(h)$, the centralizer of $h$ in $\Gamma$, is the point stabilizer for the transitive action of $\Gamma$ on the conjugacy class of $h$. This set is closed under conjugation and inverses. It is also closed under multiplication because $C_{\Gamma}(h_1h_2) > C_{\Gamma}(h_1) \cap C_{\Gamma}(h_2)$. Thus, $\Gamma_{f}$ is a normal subgroup. It follows that any finitely generated subgroup $\Delta = \langle h_1,h_2, \ldots h_n \rangle < \Gamma_f$ is virtually Abelian, as it  contains $\Delta \cap \left(\bigcap_{i=1}^{n} C_{\Gamma}(h_i) \right)$ as a finite index Abelian normal subgroup. Thus, $\Gamma$ itself is locally virtually Abelian, particularly it is amenable. This completes the proof. 
\end{proof}
\end{lemma}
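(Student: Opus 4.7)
The plan is to first characterize $\Gamma_f$ algebraically and then establish the three claimed properties (subgroup, normal, amenable) in turn. By the orbit-stabilizer theorem applied to the conjugation action of $\Gamma$ on itself, the conjugacy class of $h$ is in bijection with $\Gamma/C_{\Gamma}(h)$, where $C_{\Gamma}(h)$ is the centralizer of $h$. Hence $h\in\Gamma_f$ if and only if $[\Gamma:C_{\Gamma}(h)]<\infty$. I would record this equivalence first, as everything else flows from it.

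Next, to see that $\Gamma_f$ is a subgroup, note that $C_{\Gamma}(h)=C_{\Gamma}(h^{-1})$, so closure under inverses is immediate. For products, observe the inclusion $C_{\Gamma}(h_1)\cap C_{\Gamma}(h_2)\subseteq C_{\Gamma}(h_1h_2)$: any element commuting with both factors commutes with the product. Since the intersection of two finite-index subgroups is of finite index, this gives $[\Gamma:C_{\Gamma}(h_1h_2)]<\infty$ whenever $h_1,h_2\in\Gamma_f$. Normality is essentially tautological: conjugation permutes conjugacy classes and preserves their cardinalities, so $g\Gamma_f g^{-1}=\Gamma_f$ for every $g\in\Gamma$.

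The main work is amenability. Here I would use the standard fact that amenability is a local property: a discrete group is amenable if and only if each of its finitely generated subgroups is amenable (equivalently, amenability is preserved under directed unions). Fix a finitely generated subgroup $\Delta=\langle h_1,\dots,h_n\rangle\le\Gamma_f$ and set $N:=\bigcap_{i=1}^{n}C_{\Gamma}(h_i)$. Since each $C_{\Gamma}(h_i)$ has finite index in $\Gamma$, so does $N$, and consequently $\Delta\cap N$ has finite index in $\Delta$. By construction $\Delta\cap N$ commutes with every generator $h_i$ of $\Delta$, hence lies in the center $Z(\Delta)$; in particular it is abelian. Thus $\Delta$ is virtually abelian, and therefore amenable.

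I expect the only point that requires care is invoking the local criterion for amenability in the right form, since $\Gamma_f$ itself need not be finitely generated. Once that is in place, the argument is just the two inclusions $C_{\Gamma}(h^{-1})=C_{\Gamma}(h)$ and $C_{\Gamma}(h_1)\cap C_{\Gamma}(h_2)\subseteq C_{\Gamma}(h_1h_2)$ together with the finite-index intersection trick, which is exactly the content sketched in the paper.
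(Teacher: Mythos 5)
Your proposal is correct and follows the paper's proof essentially line for line: the same orbit--stabilizer characterization of $\Gamma_f$, the same inclusion $C_\Gamma(h_1)\cap C_\Gamma(h_2)\subseteq C_\Gamma(h_1h_2)$ for closure under products, and the same finite-index central subgroup $\Delta\cap\bigcap_i C_\Gamma(h_i)$ to show each finitely generated $\Delta\le\Gamma_f$ is virtually abelian, then amenability by locality. The only difference is that you spell out a couple of steps the paper leaves implicit, notably why that intersection is central (hence abelian) and the explicit appeal to the local criterion for amenability.
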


\begin{lemma}
\thlabel{amenable}
Let $\Gamma$ be a non-trivial amenable group. Let $$J=\overline{\left\{\sum_s c_s\lambda(s)\in\mathbb{C}[\Gamma]: \sum_s c_s=0\right\}}.$$ Then, $J$ is 
a non-trivial closed ideal of $C_r^*(\Gamma)$
\begin{proof} 
We observe that \[\left\{\sum_{s\in F}c_s\lambda(s): \sum_sc_s=0, 
 F\subset\Gamma, |F|<\infty\right\}\] is contained in the kernel of the trivial representation $\tau: C_r^*(\Gamma)\to\mathbb{C}$. Therefore, the claim follows. 
\end{proof}
\end{lemma}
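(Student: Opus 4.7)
The plan is to show that the claimed set $J$ is the kernel (restricted to the group ring) of the trivial representation, which extends continuously to $C^*_r(\Gamma)$ precisely because $\Gamma$ is amenable. This will simultaneously show that $J$ is closed, two-sided, and proper.

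First I will verify the purely algebraic fact that $J_0:=\left\{\sum_{s\in F}c_s\lambda(s) : F\subset\Gamma \text{ finite}, \sum_s c_s =0\right\}$ is a two-sided ideal in the group algebra $\mathbb{C}[\Gamma]$. Closure under addition and the $*$-operation is immediate, and for any $t\in\Gamma$ one has $\lambda(t)\bigl(\sum_s c_s\lambda(s)\bigr)=\sum_s c_s\lambda(ts)$, whose coefficients still sum to $0$; the right multiplication is analogous. Consequently, the closure $J=\overline{J_0}$ is a closed two-sided ideal of $C_r^*(\Gamma)$. Note that this step does not require amenability.

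Next I will use amenability to produce the trivial character. Since $\Gamma$ is amenable, the reduced and full group $C^*$-algebras coincide, and the map $\tau_0:\mathbb{C}[\Gamma]\to\mathbb{C}$, $\sum c_s\lambda(s)\mapsto\sum c_s$, which is a $*$-homomorphism on the $*$-algebra level, extends to a continuous $*$-homomorphism $\tau:C_r^*(\Gamma)\to\mathbb{C}$. By construction $J_0\subset\ker(\tau)$, and since $\ker(\tau)$ is norm-closed, the inclusion $J\subset\ker(\tau)$ follows.

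It remains to verify non-triviality. For any $s\neq e$ the element $\lambda(s)-\lambda(e)$ belongs to $J_0$ and is non-zero (for instance, applying the canonical trace $\mathrm{tr}$ shows $\mathrm{tr}((\lambda(s)-\lambda(e))^*(\lambda(s)-\lambda(e)))=2$), so $J\neq(0)$. Since $\Gamma$ is non-trivial, such an $s$ exists. On the other hand, $\tau(\lambda(e))=1\neq 0$, so $\lambda(e)\notin\ker(\tau)\supset J$, giving $J\neq C_r^*(\Gamma)$. The main conceptual point—and the only place amenability is used—is the extension of the trivial representation to the reduced $C^*$-algebra; without it, one could not rule out $J=C_r^*(\Gamma)$.
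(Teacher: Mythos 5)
Your proof is correct and takes the same route as the paper: both rely on the trivial representation $\tau:C_r^*(\Gamma)\to\mathbb{C}$, which exists precisely because $\Gamma$ is amenable, and observe that the defining set lies in $\ker\tau$. You simply spell out the details the paper leaves implicit — that $J$ is a two-sided ideal, that amenability is what makes $\tau$ exist on the reduced algebra, and that $J$ is neither $(0)$ nor all of $C_r^*(\Gamma)$.
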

\begin{theorem}
\thlabel{non-i.c.c}
Let $\Gamma$ be a non-i.c.c. group with $\abs{\Gamma_f} > 2$. Assume that $\mathcal{A}\ne\mathbb{C}$ is a unital $\Gamma$-$C^*$-algebra with a faithful $\Gamma$-invariant state $\varphi$. Then, there exists an intermediate algebra $\mathcal{B}$ with $C_r^*(\Gamma)\subset\mathcal{B}\subset\mathcal{A}\rtimes_r\Gamma$such that $\mathcal{B}$ is not a crossed product in a canonical way.
\begin{proof} 
Let $J \lhd C^*_r(\Gamma_f)$ be the nontrivial closed ideal given by \thref{amenable}. By \thref{biggeridealassociatedtoasmallerone} this can be extended to a nontrivial ideal $I_J \lhd C^*_r(\Gamma)$, and then $\mathcal{B}=C_r^*(\Gamma)+\overline{\mathcal{A}_{I_J}}$ is an intermediate $C^*$ algebra such that $\cB \cap \cA=\mathbb{C}$ by \thref{interm}.

All that remains to be shown is that $\mathbb{E}(\mathcal{B})\ne\mathbb{C}$. We shall show that $\mathbb{E}\left(\overline{\cA_{I_J}}\right)=\mathcal{A}$ and this will complete the proof. 
Let $s\in\Gamma_f$ be a non-identity element. Since $|\Gamma_f|>2$, we can find an element
$t\in\Gamma_f\setminus\{e\}$ such that $st\ne e$. Let $a\in\mathcal{A}$.  Let $\eta=\lambda(s)-\lambda(e)$ and $\tilde{\eta}=\lambda(e)-\lambda(t)$ be two elements in $J\subset I_J$. We observe that $\eta,\tilde{\eta}\in J$, and therefore, $\eta a \tilde{\eta}\in \overline{\mathcal{A}_{I_J}}$ for all $a\in \mathcal{A}$. Now,
\begin{align*}
\eta (-a) \tilde{\eta}&=\left(\lambda(s)-\lambda(e)\right)(-a)\left(\lambda(e)-\lambda(t)\right)
\\&=-\lambda(s)a+\lambda(s)a\lambda(t)+a-a\lambda(t)\\&=
-\lambda(s)a+s.a\lambda(st)+a-a\lambda(t)
\end{align*}
Since $st\ne e$, 
$\mathbb{E}(\eta (-a) \tilde{\eta})=a$. In particular, $\mathbb{E}\left(\overline{\mathcal{A}_J}\right)=\mathcal{A}$.\\
\end{proof}
\end{theorem}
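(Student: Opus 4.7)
The plan is to build the required intermediate algebra by extracting a non-trivial ideal from the reduced $C^*$-algebra of the FC-center $\Gamma_f$, lifting it to an ideal of $C^*_r(\Gamma)$, and then feeding it into the general construction of Subsection 6.2. Since $\Gamma$ is not i.c.c., $\Gamma_f$ is non-trivial, and by \thref{finiteconjugacyclass} it is a normal amenable subgroup of $\Gamma$.

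Applying \thref{amenable} to $\Gamma_f$ produces a non-trivial proper closed two-sided ideal $J\lhd C^*_r(\Gamma_f)$, namely the closure of the augmentation kernel. Because $J$ is the kernel of the trivial representation of $\Gamma_f$, and this representation is invariant under conjugation by any element of $\Gamma$, the ideal $J$ is automatically $\Gamma$-invariant. I can then invoke \thref{biggeridealassociatedtoasmallerone} with $N=\Gamma_f$ to extract a non-trivial, closed, two-sided ideal $I_J\lhd C^*_r(\Gamma)$ containing $J$. Setting $\mathcal{B}=C^*_r(\Gamma)+\overline{\mathcal{A}_{I_J}}$, \thref{algebra} guarantees this is a $C^*$-subalgebra of $\mathcal{A}\rtimes_r\Gamma$ and \thref{interm} gives $\mathcal{B}\cap\mathcal{A}=\mathbb{C}$.

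To see that $\mathcal{B}$ cannot equal $\mathcal{B}_0\rtimes_r\Gamma$ for any $\Gamma$-invariant $\mathcal{B}_0\subset\mathcal{A}$, observe that such a decomposition would force $\mathcal{B}_0\subset\mathcal{B}\cap\mathcal{A}=\mathbb{C}$, i.e.\ $\mathcal{B}=C^*_r(\Gamma)$. So I only need to show $\mathcal{B}\supsetneq C^*_r(\Gamma)$, and in fact I will aim for the stronger statement that the canonical expectation $\mathbb{E}$ satisfies $\mathbb{E}(\overline{\mathcal{A}_{I_J}})=\mathcal{A}$. Here the hypothesis $|\Gamma_f|>2$ is decisive: it lets me pick $s\in\Gamma_f\setminus\{e\}$ and then a second non-identity element $t\in\Gamma_f$ with $st\ne e$ (otherwise the only non-identity element of $\Gamma_f$ would be $s^{-1}$, contradicting $|\Gamma_f|>2$). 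The elements $\eta=\lambda_s-\lambda_e$ and $\tilde{\eta}=\lambda_e-\lambda_t$ both lie in $J\subset I_J$, so by Remark \ref{rem:equalspanclosure} the product $\eta(-a)\tilde{\eta}$ lies in $\overline{\mathcal{A}_{I_J}}$ for every $a\in\mathcal{A}$. Expanding via the commutation relation $\lambda_g a=(g.a)\lambda_g$ gives a sum of four terms supported on the group elements $\{e,s,t,st\}$, and the assumption $st\ne e$ ensures that precisely the $\lambda_e$-term survives the expectation, yielding $\mathbb{E}(\eta(-a)\tilde{\eta})=a$.

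The only step requiring any genuine care is the $\Gamma$-invariance of $J$, since the lifting procedure of \thref{biggeridealassociatedtoasmallerone} is unavailable otherwise. This is why the proof must use the canonical augmentation ideal rather than an arbitrary ideal of $C^*_r(\Gamma_f)$, and why the normality supplied by \thref{finiteconjugacyclass} plays such an essential role. Once this point is secured, the remaining work is elementary: a direct four-term expansion that crucially exploits the freedom $|\Gamma_f|>2$ to place a surviving expectation at $e$ while the other three monomials fall in the kernel of $\mathbb{E}$.
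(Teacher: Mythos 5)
Your proof is correct and follows essentially the same route as the paper: extract the augmentation ideal $J\lhd C^*_r(\Gamma_f)$ via \thref{amenable}, lift it to $I_J\lhd C^*_r(\Gamma)$ via \thref{biggeridealassociatedtoasmallerone}, form $\mathcal{B}=C^*_r(\Gamma)+\overline{\mathcal{A}_{I_J}}$, and use the two elements $\eta=\lambda(s)-\lambda(e)$, $\tilde\eta=\lambda(e)-\lambda(t)$ with $s,t,st\neq e$ to show $\mathbb{E}(\overline{\mathcal{A}_{I_J}})=\mathcal{A}$. The only differences are small clarifications you add that the paper leaves implicit: you note explicitly that $J$ is $\Gamma$-invariant because it is the kernel of the trivial representation of $\Gamma_f$ (which is conjugation-invariant), and you spell out why $\mathbb{E}(\mathcal{B})\neq\mathbb{C}$ rules out $\mathcal{B}=\mathcal{B}_0\rtimes_r\Gamma$ (any such $\mathcal{B}_0$ would sit in $\mathcal{B}\cap\mathcal{A}=\mathbb{C}$, forcing $\mathcal{B}=C^*_r(\Gamma)$).
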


Whenever $\mathcal{A}$ is a commutative $C^*$-algebra $C(X)$ with $|X|>2$, arguing similarly as in Example~\ref{ex:toycase}, we can remove the assumption of $|\Gamma_f|>2$ in the above theorem. In particular, we have the following.     
\begin{cor}
\thlabel{assumptiononX}
 Let $\Gamma$ be a non-i.c.c. group and $X$ a compact Hausdorff $\Gamma$-space with $|X|>2$. Assume that $X$ has a $\Gamma$-invariant probability measure $\nu$ with full support. Then, there exists an intermediate algebra $\mathcal{B}$ with $C_r^*(\Gamma)\subset\mathcal{B}\subset C(X)\rtimes_r\Gamma$ such that $\mathcal{B}$ is not a crossed product in a canonical way.   
\begin{proof}
Let $\Gamma_f$ be the FC-center of the group $\Gamma$. Let $J \lhd C^*_r(\Gamma_f)$ be the closed non trivial ideal and $I_J \lhd C^*_r(\Gamma)$ the corresponding non-trivial ideal (\thref{biggeridealassociatedtoasmallerone}). Also, let $\mathcal{B}=C_r^*(\Gamma)+\overline{A_{I_J}}$ be the intermediate $C^*$ algebra. If $|\Gamma_f|>2$, then the claim follows from \thref{non-i.c.c}. If $|\Gamma_f|=2$, then we can argue similarly as in the proof of Example~\ref{ex:toycase} to conclude that $\mathbb{E}(\cB)\ne\mathbb{C}$.  
\end{proof}
\end{cor}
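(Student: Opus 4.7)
The plan is to produce the required intermediate algebra $\mathcal{B}$ by running the same ideal-lifting recipe used in Theorem~\ref{not-crossed} and \thref{non-i.c.c}, choosing the seed ideal so that it exists even when $\Gamma$ fails to be i.c.c. Since $\Gamma$ is non-i.c.c., Lemma \thref{finiteconjugacyclass} makes the FC-center $\Gamma_f$ a non-trivial normal amenable subgroup of $\Gamma$. Amenability of $\Gamma_f$ together with Lemma \thref{amenable} furnishes the augmentation ideal $J\lhd C^*_r(\Gamma_f)$, and one quickly checks that $J$ is $\Gamma$-invariant (the trivial character of $\Gamma_f$ is fixed by conjugation from $\Gamma$ since $\Gamma_f$ is normal). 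Applying \thref{biggeridealassociatedtoasmallerone} then extends $J$ to a non-trivial closed two-sided ideal $I_J\lhd C^*_r(\Gamma)$.

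Next, note that $\cA = C(X)$ carries the faithful $\Gamma$-invariant state $\varphi(f)=\int f\,d\nu$ (faithfulness coming from the full support of $\nu$), so Proposition \thref{algebra} makes $\mathcal{B}=C^*_r(\Gamma)+\overline{\cA_{I_J}}$ a well-defined intermediate $C^*$-algebra, and Proposition \thref{interm} gives the crucial identity $\mathcal{B}\cap C(X)=\mathbb{C}$. Since $|X|>2$, this equality already implies $\mathcal{B}\subsetneq C(X)\rtimes_r\Gamma$, and more importantly it rules out $\mathcal{B}=\cB\rtimes_r\Gamma$ for any $\Gamma$-invariant subalgebra $\mathbb{C}\neq\cB\subset C(X)$. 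The only way $\mathcal{B}$ could still be a canonical crossed product is therefore $\mathcal{B}=C^*_r(\Gamma)$, so the whole problem reduces to showing $\mathcal{B}\supsetneq C^*_r(\Gamma)$, equivalently $\mathbb{E}(\overline{\cA_{I_J}})\not\subset\mathbb{C}$.

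For this last step I split into two cases on $|\Gamma_f|$. If $|\Gamma_f|>2$, then Theorem \thref{non-i.c.c} applies directly to the pair $(\Gamma,C(X))$ and gives $\mathbb{E}(\overline{\cA_{I_J}})=C(X)\neq\mathbb{C}$, finishing this case. If $|\Gamma_f|=2$, write $\Gamma_f=\{e,s\}$ with $s^2=e$ and take $\eta=\lambda(e)-\lambda(s)\in J\subset I_J$; note $\eta^*=\eta$. Using $|X|>2$, choose points $x_1,x_2\in X$ with $x_1\notin\{x_2,sx_2\}$, and by Urysohn a function $0\le f\le 1$ with $f(x_1)=1$ and $\operatorname{Supp}(f)$ disjoint from $\{x_2,sx_2\}$. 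Then $\eta f\eta\in\cA_{I_J}\subset\overline{\cA_{I_J}}\subset\mathcal{B}$, and a direct expansion (exactly as in Example~\ref{ex:toycase}) gives
\[
\mathbb{E}(\eta f\eta)= f+s.f,
\]
which is $\ge 1$ at $x_1$ but $0$ at $x_2$, hence non-constant. Thus $\mathbb{E}(\mathcal{B})\neq\mathbb{C}$ in this case as well, and $\mathcal{B}$ is the desired intermediate algebra.

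The only genuine obstacle is the degenerate case $|\Gamma_f|=2$: Theorem \thref{non-i.c.c} breaks down there, and Example~\thref{ex:necessaryassumption} shows that without the hypothesis $|X|>2$ the conclusion is false. The proof therefore has to exploit $|X|>2$ explicitly, and the Urysohn construction of $f$ above is the cleanest way to do so. All the other ingredients are assembled directly from the structural lemmas in this subsection without further work.
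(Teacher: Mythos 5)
Your proof is correct and follows essentially the same route as the paper: extract the augmentation ideal $J\lhd C^*_r(\Gamma_f)$ via \thref{amenable}, lift it to $I_J\lhd C^*_r(\Gamma)$ via \thref{biggeridealassociatedtoasmallerone}, form $\mathcal{B}=C^*_r(\Gamma)+\overline{\cA_{I_J}}$, invoke \thref{non-i.c.c} when $|\Gamma_f|>2$, and run the Example~\ref{ex:toycase} computation when $|\Gamma_f|=2$. You have also supplied the small details the paper leaves implicit (the $\Gamma$-invariance of $J$, the faithfulness of the state coming from full support, and the explicit Urysohn choice using $|X|>2$), which is exactly what was meant by ``argue similarly as in Example~\ref{ex:toycase}.''
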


Let us conclude by noting that in the special case where the algebra $\cA$ is Abelian, we obtain a complete classification in Theorem~\ref{not-crossed}:
\begin{theorem}
Let $\Gamma$ be a non $C^*$-simple group and $X$ a compact Hausdorff $\Gamma$-space with more than two points and admitting a $\Gamma$-invariant measure of full support. Then, there exists an intermediate subalgebra
$$C^*_r(\Gamma) \subsetneq \cB \subsetneq C(X) \rtimes_r \Gamma$$
with the property that $\cB \cap C(X) = \C$.
\end{theorem}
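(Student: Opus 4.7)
The plan is to deduce this theorem as a direct synthesis of the machinery already built in Subsections on i.c.c.\ and non-i.c.c.\ actions, proceeding by a case split on the FC-center $\Gamma_f$. The invariant measure of full support gives a faithful $\Gamma$-invariant state $\varphi$ on $\cA = C(X)$ by integration, which is the only hypothesis needed to feed our previous constructions. Since $\Gamma$ is not $C^*$-simple, $C_r^*(\Gamma)$ carries a non-trivial closed two-sided ideal.

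In Case~1, suppose $\Gamma$ is i.c.c. Then we directly invoke \thref{i.c.c-intermediate} with $\cA = C(X)$: picking any non-trivial two-sided closed ideal $I \lhd C_r^*(\Gamma)$, the algebra $\cB = C_r^*(\Gamma)+\overline{\cA_{I}}$ is an intermediate $C^*$-subalgebra by \thref{algebra}, it satisfies $\cB \cap \cA = \C$ by \thref{interm}, and the argument in the proof of \thref{i.c.c-intermediate} shows $\overline{\mathbb{E}(\cA_{I})} = \cA \neq \C$ (using $\abs{X} > 1$), so $\cB$ strictly contains $C_r^*(\Gamma)$.

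In Case~2, suppose $\Gamma$ is not i.c.c., so the FC-center $\Gamma_f$ is a non-trivial normal amenable subgroup by \thref{finiteconjugacyclass}. If $\abs{\Gamma_f} > 2$, we apply \thref{non-i.c.c}: the augmentation ideal $J \lhd C_r^*(\Gamma_f)$ of \thref{amenable} is non-trivial and $\Gamma$-invariant (being the kernel of the trivial representation restricted to $C_r^*(\Gamma_f)$); pulling it up to $I_J \lhd C_r^*(\Gamma)$ via \thref{biggeridealassociatedtoasmallerone} and forming $\cB = C_r^*(\Gamma)+\overline{\cA_{I_J}}$ yields the required intermediate algebra, with $\mathbb{E}(\cB) = \cA$ established there through an explicit element of the form $(\lambda(s)-\lambda(e))(-a)(\lambda(e)-\lambda(t))$. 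If instead $\Gamma_f = \Z/2\Z$, we use \thref{assumptiononX}: the hypothesis $\abs{X} > 2$ lets us pick $x_1 \notin \Gamma x_2$ and a bump function $f$ supported away from $\Gamma x_2$, and the computation in Example~\ref{ex:toycase} shows that $(\lambda(e)-\lambda(s))f(\lambda(e)-\lambda(s))$ lies in $\cA_{I_J}$ but has non-constant conditional expectation, witnessing $\mathbb{E}(\cB) \neq \C$.

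The main conceptual point, and the place where the previous sections do the real work, is proving that $\cB$ is \emph{strictly} larger than $C_r^*(\Gamma)$; the inclusion $\cB \cap C(X) = \C$ is the easier bound and was handled uniformly in \thref{interm} using only the faithful $\Gamma$-invariant state and the properness of the ideal in $C_r^*(\Gamma)$. The only place where the hypothesis $\abs{X} > 2$ is genuinely needed (as opposed to $\abs{X} > 1$) is in the borderline sub-case $\Gamma_f = \Z/2\Z$, where the argument of Example~\ref{ex:necessaryassumption} shows that on a two-point space no intermediate subalgebra exists at all, so this hypothesis is sharp.
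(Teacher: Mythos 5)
Your proposal is correct and takes essentially the same approach as the paper. The paper's proof is simply the two-line case split on $\Gamma_f$ (citing \thref{i.c.c-intermediate} when $\Gamma_f = \{e\}$ and \thref{assumptiononX} otherwise); you have unrolled the contents of \thref{assumptiononX} into its two sub-cases ($\abs{\Gamma_f}>2$ via \thref{non-i.c.c} versus $\abs{\Gamma_f}=2$ via the bump-function argument of Example~\ref{ex:toycase}), but the decomposition, the key lemmas, and the role of each hypothesis are identical to what the paper does.
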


\begin{proof}
Let us assume that $|X|>2$. If $\Gamma_f=\{e\}$, the assertion follows from \thref{i.c.c-intermediate}. If $\Gamma_f\ne \{e\}$, we obtain the claim from \thref{assumptiononX}.
\end{proof}
It is possible that in the case when $|X| = 2$, there might be no nontrivial intermediate subalgebras;
see Example \ref{ex:necessaryassumption}.
\subsection{An intermediate \texorpdfstring{$C^*$-}{}algebra associated to an ideal-II} In this section, for a non-trivial ideal $I\triangleleft\mathcal{A}$, we associate an algebra $\mathcal{B}$ with $\mathcal{A}\subset\mathcal{B}\subset\mathcal{A}\rtimes_r\Gamma$ such that $\mathcal{B}$ is not of the form $\mathcal{A}\rtimes_r\Lambda$ for any subgroup $\Lambda\triangleleft\Gamma$.
\begin{remark}
\label{rem:algassocideal}   Let $\Gamma$ be a discrete group and $\mathcal{A}$ be a unital $\Gamma$-$C^*$-algebra. Let $I$ be a non-trivial closed $\Gamma$-invariant ideal in $\mathcal{A}$. Then, 
\[\mathcal{A}_I=\overline{\text{Span}\left\{\lambda(s) a: s\in \Gamma,~a\in I \right\}}=I\rtimes_r\Gamma\]
is an ideal of $\mathcal{A}\rtimes_r\Gamma$.
\end{remark}
\begin{theorem}
\thlabel{notcomingfromasubgroup}
Let $\Gamma$ be a discrete group. Let $\mathcal{A}$ be a $\Gamma$-$C^*$-algebra such that $\mathcal{A}$ is not $\Gamma$-simple.  Then, there exits an intermediate $C^*$-algebra $\mathcal{A}\subset\mathcal{B}\subset\mathcal{A}\rtimes_r\Gamma$ such that $\mathcal{B}$ is not of the form $\mathcal{A}\rtimes_r\Lambda$ for any subgroup $\Lambda\le\Gamma$.
\begin{proof} Since $\mathcal{A}$ is not $\Gamma$-simple, let $I$ be a non-trivial $\Gamma$-invariant ideal in $\mathcal{A}$. Let $\mathcal{A}_I$ be the associated $C^*$-algebra as constructed in Remark~\ref{rem:algassocideal}. Let $\mathcal{B}=\mathcal{A}+\mathcal{A}_I$. Since $\mathcal{A}_I\triangleleft\mathcal{A}\rtimes_r\Gamma$, it follows from \cite[Theorem~3.1.7]{murphy1990} that $\mathcal{B}$ is a $C^*$-algebra. Consider the possible Fourier coefficients of any element $b\approx\sum_{g\in\Gamma}b_g\lambda(g)\in\mathcal{B}$. Given that $b=a+x$ for some $a\in\mathcal{A}$ and $x\in I\rtimes_r\Gamma$, we see that the coefficients $b_g$ must all lie in $I$ whenever $g\ne e$. In fact, in this case, the set $\{b_g: g\ne e\}$ becomes equal to $I$. Therefore, it is clear that $\mathcal{B}$ cannot be of the form $\mathcal{A}\rtimes_r\Lambda$, for which the set of all possible $b_g$ would be either $\mathcal{A}$ or $\{0\}$, depending on whether $g\in\Lambda$ or $g\not\in\Lambda$, respectively.
\end{proof}
\end{theorem}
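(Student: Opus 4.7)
My plan is to imitate, at the level of ideals in $\mathcal{A}$, the construction used earlier (\thref{biggeridealassociatedtoasmallerone}, \thref{algebra}) for ideals in $C^*_r(\Gamma)$, and then detect the failure of being a crossed product by inspecting Fourier coefficients.

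Since $\mathcal{A}$ is not $\Gamma$-simple, I pick a non-trivial closed two-sided $\Gamma$-invariant ideal $I\triangleleft\mathcal{A}$. Following Remark \ref{rem:algassocideal}, I form the sub-crossed product $\mathcal{A}_I=I\rtimes_r\Gamma$, which sits inside $\mathcal{A}\rtimes_r\Gamma$ as a closed two-sided ideal (the $\Gamma$-invariance of $I$ is what makes $\mathcal{A}_I$ closed under left multiplication by $\lambda_s$). I then define the candidate
\[
\mathcal{B}:=\mathcal{A}+\mathcal{A}_I\subset\mathcal{A}\rtimes_r\Gamma.
\]

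The first step is to verify that $\mathcal{B}$ is a $C^*$-subalgebra. This is the standard fact that the sum of a $C^*$-subalgebra and a closed two-sided ideal of the ambient algebra is automatically closed and is a $C^*$-subalgebra (\cite[Theorem~3.1.7]{murphy1990}), exactly as was invoked in the proof of \thref{algebra}. I should also remark that $\mathcal{A}\subsetneq\mathcal{B}$: taking $0\ne i\in I$ and any $s\ne e$, the element $i\lambda_s\in\mathcal{A}_I\subset\mathcal{B}$ has canonical Fourier coefficient $i\ne 0$ at $s\ne e$, so it does not lie in $\mathcal{A}$.

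The second and main step is to show that $\mathcal{B}$ cannot be written as $\mathcal{A}\rtimes_r\Lambda$ for any subgroup $\Lambda\le\Gamma$. The key idea is to look at the $g$-th Fourier coefficient $b\mapsto\mathbb{E}(b\lambda_g^*)\in\mathcal{A}$, where $\mathbb{E}:\mathcal{A}\rtimes_r\Gamma\to\mathcal{A}$ is the canonical conditional expectation. I will describe, for each $g\ne e$, the set
\[
C_g(\mathcal{B}):=\{\mathbb{E}(b\lambda_g^*)\ \mid\ b\in\mathcal{B}\}\subset\mathcal{A}
\]
and observe that $C_g(\mathcal{B})=I$. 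The inclusion $C_g(\mathcal{B})\supset I$ is immediate from $i\lambda_g\in\mathcal{B}$. For the reverse inclusion, since $b=a+x$ with $a\in\mathcal{A}$ and $x\in I\rtimes_r\Gamma$, and since $\mathbb{E}(a\lambda_g^*)=0$ for $g\ne e$, it suffices to check that $\mathbb{E}(x\lambda_g^*)\in I$ whenever $x\in I\rtimes_r\Gamma$. This holds on the dense $*$-subalgebra of finite sums $\sum_h i_h\lambda_h$ (where the coefficient is literally $i_g\in I$), and then extends to the closure by continuity of $\mathbb{E}$ together with closedness of $I$.

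Now suppose towards a contradiction that $\mathcal{B}=\mathcal{A}\rtimes_r\Lambda$ for some $\Lambda\le\Gamma$. The same computation of $C_g$ for $\mathcal{A}\rtimes_r\Lambda$ yields $C_g(\mathcal{A}\rtimes_r\Lambda)=\mathcal{A}$ when $g\in\Lambda$, and $C_g(\mathcal{A}\rtimes_r\Lambda)=\{0\}$ when $g\notin\Lambda$. Since $\mathcal{A}\subsetneq\mathcal{B}$, there exists some $e\ne g\in\Lambda$; for this $g$ we would need $I=C_g(\mathcal{B})=\mathcal{A}$, contradicting that $I$ is a proper ideal. This finishes the proof.

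I do not foresee a serious obstacle: all of the heavy lifting (the existence of a faithful conditional expectation, the closedness of $\mathcal{A}+\mathcal{A}_I$) is available from standard references already cited in the paper. The only point requiring a hint of care is the density argument that identifies $C_g(\mathcal{B})$ with $I$ for $g\ne e$, but this is routine given that $\mathbb{E}$ is contractive and $I$ is closed.
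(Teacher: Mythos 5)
Your proof is correct and follows essentially the same route as the paper: form $\mathcal{B}=\mathcal{A}+I\rtimes_r\Gamma$, invoke Murphy's theorem for closedness, and compare the sets of possible $g$-th Fourier coefficients. Your write-up is a bit more careful than the paper's informal version (you define $C_g(\mathcal{B})$ precisely, verify both inclusions $C_g(\mathcal{B})=I$ for $g\neq e$, and spell out the density/closedness argument), but there is no new idea and no gap.
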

\newpage
\bibliographystyle{amsalpha}
\bibliography{irrational_rotation}
\end{document}